\numberwithin{equation}{section}
\newtheorem{theorem}{Theorem}[section]
\newtheorem{corollary}[theorem]{Corollary}
\newtheorem{lemma}[theorem]{Lemma}
\newtheorem{proposition}[theorem]{Proposition}
\newtheorem{definition}[theorem]{Definition}
\newtheorem{conjecture}[theorem]{Conjecture}
\newcommand{\K}{\mathcal{K}}
\newcommand{\N}{\mathbb{N}}
\newcommand{\Z}{\mathbb{Z}}
\newcommand{\R}{\mathbb{R}}
\newcommand{\C}{\mathbb{C}}
\newcommand{\Cu}{\mathrm{Cu}}
\newcommand{\id}{\mathrm{id}}
\newcommand{\M}{\mathrm{M}}
\newcommand{\I}{\mathrm{Ideal}}
\newcommand{\Her}{\mathrm{Her}}
\newcommand{\Lat}{\mathrm{Lat}}
\newcommand{\Ped}{\mathrm{Ped}}
\begin{document}
\title{Reduction of the dimension of nuclear C*-algebras}

\author{Luis Santiago}\address{Luis Santiago, Department of Mathematics, Fenton Hall, University of Oregon, Eugene OR 97403, USA}\email{luiss@uoregon.edu}
\date{\today}

\begin{abstract}
We show that for a large class of C*-algebras $\mathcal{A}$, containing arbitrary direct limits of separable type I C*-algebras, the following statement holds: If $A\in \mathcal{A}$ and $B$ is a simple projectionless C*-algebra with trivial K-groups that can be written as a direct limit of a system of (nonunital) recursive subhomogeneous algebras with no dimension growth then the stable rank of $A\otimes B$ is one. As a consequence we show that if $A\in \mathcal A$ and $\mathcal W$ is the C*-algebra constructed in \cite{Jacelon} then the stable rank of $A\otimes\mathcal W$ is one.  We also prove the following stronger result: If $A$ is separable C*-algebra that can be written as a direct limit of C*-algebras of the form $\mathrm{C}_0(X)\otimes \M_n$, where $X$ is locally compact and Hausdorff, then $A\otimes \mathcal W$ can be written as a direct limit of a sequence of 1-dimensional noncommutative CW-complexes.  
\end{abstract}

\maketitle

\section{Introduction}
The notion of covering dimension of a topological space has led to different dimension theories for C*-algebras; for instance, the stable rank, real rank, decomposition rank, and nuclear dimension. Each of these dimension theories have had important applications to the theory of C*-algebras. It was shown in \cite{Winter-dec-rank} and \cite{Winter-nuc-dim}  that simple separable nonelementary unital C*-algebras with finite decomposition rank or more generally with finite nuclear dimension absorb the Jiang-Su algebra tensorially. As a consequence, new classification results have been obtained for simple C*-algebras. For C*-algebras that can be written as direct limits of subhomogeneous algebras one can also associate a dimension, namely the infimum over all such direct limit decompositions of the supremum of the covering dimension of the spectrum of all the C*-algebras appearing on the given direct limit. For instance, it was shown in \cite{Gong} that for simple separable unital AH-algebras with very slow dimension growth this dimension is at most three. This result was later used in \cite{Elliott-Gong-Li}  to classify this class of C*-algebras.

In this paper we study different notions of dimension for certain C*-algebras of the form $A\otimes B$, where $B$ is simple, nuclear, either projectionless or unital with no nonzero projections but its unit, and with a specified direct limit decomposition.  We are particularly interested in two cases: the first case is when $B$ is the C*-algebra $\mathcal W$ constructed in \cite{Jacelon} and the second is when $B$ is the Jiang-Su algebra $\mathcal{Z}$. It was shown in \cite{Rordam-stable-rank} that if $A$ is a simple finite C*-algebra then the stable rank of $A\otimes \mathcal Z$ is one. However, it is not known in general what the exact value of the stable rank is when $A$ is not simple. In Proposition \ref{prop: Jiang-Su} we partially answer this question. We compute the stable rank in the case that $A$ is a commutative C*-algebra whose spectrum is a CW-complex. It is believed that the decomposition rank (in the stably finite case) and the nuclear dimension of C*-algebras of the form $A\otimes \mathcal Z$ is at most two. This has recently been confirmed in \cite{Tikuisis-Winter} when $A$ is a commutative C*-algebra.

The C*-algebra $\mathcal W$ is a simple separable nuclear C*-algebra that is stably finite, stably projectionless, has a unique tracial state, and has trivial K-groups. This algebra should be considered as a stably finite analog of the Cuntz algebra $\mathcal{O}_2$. It should play central role in the classification of projectionless C*-algebras. 
Let $A$ be a C*-algebra and let $\mathrm{T}(A)$ denote the cone of lower semicontinuous traces on $A_+$ with values in $[0, \infty]$ (note that the traces are not required to be densely finite). It has been shown in \cite{Elliott-Robert-Santiago} that $\mathrm{T}(A)$ belongs to the category of compact Hausdorff non-cancellative cones with jointly continuous addition and jointly continuous scalar multiplication.
Our main motivation for studying C*-algebras of the form $A\otimes W$ is the following conjecture of Leonel Robert:
\begin{conjecture}
If $A$ and $B$ are separable nuclear C*-algebras then 
$$\mathrm{T}(A)\cong \mathrm{T}(B)\quad\Longleftrightarrow\quad A\otimes W\otimes \mathcal{K}\cong B\otimes W\otimes \mathcal{K},$$
where the isomorphism between $\mathrm{T}(A)$ and $\mathrm{T}(B)$ is assumed to be a linear homeomorphism.
\end{conjecture}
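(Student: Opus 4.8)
This is posed as a conjecture, so rather than a complete argument the plan is to outline the route I would take, indicating which parts are within reach of the structural theorems above and where the essential difficulty lies. First I would split the equivalence into its two directions and dispose of the reverse implication, which is purely functorial. If $A\otimes\mathcal W\otimes\mathcal K\cong B\otimes\mathcal W\otimes\mathcal K$, then applying $\mathrm{T}(-)$ gives an isomorphism of cones $\mathrm{T}(A\otimes\mathcal W\otimes\mathcal K)\cong\mathrm{T}(B\otimes\mathcal W\otimes\mathcal K)$. I would then record the computation $\mathrm{T}(A\otimes\mathcal W\otimes\mathcal K)\cong\mathrm{T}(A)$: tensoring with $\mathcal K$ leaves the trace cone unchanged, and since $\mathcal W$ has a unique trace up to scalar multiple $\tau_{\mathcal W}$, the assignment $\tau\mapsto\tau\otimes\tau_{\mathcal W}$ furnishes a linear homeomorphism $\mathrm{T}(A)\to\mathrm{T}(A\otimes\mathcal W)$ compatible with the cone structure of \cite{Elliott-Robert-Santiago}. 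Composing these identifications yields the reverse implication.

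The forward implication is the substantive one, and my plan is to reduce it to a classification statement at the level of the Cuntz semigroup. The two structural facts I would exploit are that tensoring with $\mathcal W$ annihilates K-theory (both K-groups of $A\otimes\mathcal W$ vanish) and forces stable finiteness and stable rank one—this last point being exactly what the main results of this paper supply—so that one expects $\mathrm{Cu}(A\otimes\mathcal W\otimes\mathcal K)$ to be recoverable from the trace cone alone. Concretely, I would prove that for such $\mathcal W$-stable algebras $\mathrm{Cu}(A\otimes\mathcal W\otimes\mathcal K)$ is naturally isomorphic to the ordered semigroup of lower semicontinuous linear functions $\mathrm{T}(A)\to(0,\infty]$ (adjoined with a least element), the order and addition coming from the cone, via the pairing sending a Cuntz class to its associated dimension function $\tau\mapsto d_\tau(\,\cdot\,)$. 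Granting this, a linear homeomorphism $\mathrm{T}(A)\cong\mathrm{T}(B)$ transports directly to an isomorphism $\mathrm{Cu}(A\otimes\mathcal W\otimes\mathcal K)\cong\mathrm{Cu}(B\otimes\mathcal W\otimes\mathcal K)$.

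The final step is to upgrade this $\mathrm{Cu}$-isomorphism to a $*$-isomorphism by an Elliott intertwining. For the algebras accessible to this paper the ingredients are already in place: the stronger result quoted in the abstract writes $A\otimes\mathcal W$ as an inductive limit of one-dimensional noncommutative CW-complexes, and inductive limits of this form are classified by their Cuntz semigroup (Robert's classification of such limits), so the existence and uniqueness theorems needed to construct the intertwining are available. Thus, for $A$ and $B$ separable and expressible as limits of algebras of the form $\mathrm{C}_0(X)\otimes\M_n$, the conjecture would follow from the reduction above.

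The main obstacle to the general statement is twofold. First, one must remove the restriction to limits of $\mathrm{C}_0(X)\otimes\M_n$ and show that $A\otimes\mathcal W$ falls under a classification theorem—ideally that it is always an inductive limit of one-dimensional NCCW-complexes—for an arbitrary separable nuclear $A$. Second, and more seriously, one must establish existence and uniqueness theorems for $*$-homomorphisms between $\mathcal W$-stable algebras in full generality, controlling morphisms solely through the maps they induce on traces. It is this classification of maps, rather than of objects, in the stably projectionless setting that I expect to be the real point of difficulty; the object-level invariant computation and the functorial reverse direction are comparatively routine once the structural theorems of this paper are in hand.
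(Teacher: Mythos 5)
This statement is a conjecture (due to Robert) which the paper does not prove; the paper's contribution is only a \emph{reduction}: for $A$ a separable direct limit of homogeneous algebras, Theorem \ref{thm: mainmain} writes $A\otimes\mathcal W$ as an inductive limit of C*-algebras stably isomorphic to 1-dimensional NCCW-complexes, so that the conjecture for this class would follow from a classification theorem for such limits. Your outline is, up to that point, consistent with the paper's stated strategy: the reverse direction is essentially functorial (modulo verifying $\mathrm{T}(A\otimes\mathcal W\otimes\mathcal K)\cong\mathrm{T}(A)$ in the category of cones of \cite{Elliott-Robert-Santiago}, which needs an argument for lower semicontinuous, not necessarily densely finite traces, but is a reasonable step), and the forward direction is indeed expected to pass through a Cuntz-semigroup computation plus an Elliott intertwining.

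The genuine gap is in your third step, where you assert that ``inductive limits of this form are classified by their Cuntz semigroup (Robert's classification of such limits), so the existence and uniqueness theorems needed to construct the intertwining are available,'' and conclude that for $A$ a limit of algebras $\mathrm{C}_0(X)\otimes\M_n$ the conjecture ``would follow.'' This is exactly the point the paper flags as \emph{not} available: the classification of \cite{Robert} applies only when the 1-dimensional NCCW building blocks have trivial $\mathrm{K}_1$-groups, and the paper states explicitly that all the building blocks arising in the decomposition of $A\otimes\mathcal W$ have non-trivial $\mathrm{K}_1$-groups, so that result ``can not be applied directly in our case.'' Consequently your claimed intertwining step fails as stated, and the conjecture remains open even for the class accessible to this paper; what you present as a settled ingredient is precisely the open classification problem to which the paper reduces the conjecture. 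Your closing paragraph correctly identifies morphism-level classification as the hard point in general, but it contradicts the earlier claim that the needed theorems are ``already in place'' for the homogeneous-limit case.
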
 
This conjecture has been shown to be true for AF-algebras and for $\mathcal{O}_2$-absorbing algebras. In fact, in the $\mathcal{O}_2$-absorbing case this conjecture is nothing more than Kirchberg Classification Theorem of $\mathcal{O}_2$-absorbing algebras. As a consequence of Theorem \ref{thm: mainmain} below we obtained the following result: if $A$ is a separable direct limit of homogeneous C*-algebras, then the tensor product $A\otimes \mathcal W$ is a direct limit of a sequence of 1-dimensional noncommutative CW-complexes (these are subhomogeneous algebras of 1-dimensional spectrum). This result reduces the proof of Robert's Conjecture for direct limits of homogeneous algebras to prove a classification result for direct limits of 1-dimensional noncommutative CW-complexes (or shortly NCCW-complexes). This result also implies that the decomposition rank and the nuclear dimension of $A\otimes \mathcal W$ is one. Direct limits of sequences of 1-dimensional NCCW-comples has been classified in \cite{Robert} in the case that the building blocks have trivial K$_1$-groups. Unfortunately, this classification result can not be applied directly in our case since all the building blocks obtained in the direct limit decomposition of $A\otimes \mathcal W$ have  non-trivial K$_1$-groups. Another consequence of Theorem \ref{thm: mainmain} is that if $A$ is a C*-algebra in the class $\mathcal{A}$ defined below then the stable rank of $A\otimes \mathcal{W}$ is one. In particular, by Theorem \ref{thm: type I} the stable rank of the tensor product of $\mathcal W$ with a direct limit of separable type I C*-algebras is one.

\begin{definition}
Let $\mathcal A$ be a class of C*-algebras. We say that a C*-algebra $B$ is locally contained in $\mathcal A$ if for every $\epsilon>0$ and every finite subset $F$ of $B$ there exists a C*-algebra $A\in \mathcal A$ and a *-homomorphism $\phi\colon A\to B$ such that the distance from $x$ to $\phi(A)$ is less than $\epsilon$ for every $x\in F$.
\end{definition}

\begin{definition}\label{def: A}
Let us denote by $\mathcal A$ the smallest class of C*-algebras that satisfies the following properties: 
\begin{itemize}
\item[(i)] $\mathrm{C}_0(X)\in \mathcal{A}$ for every locally compact space $X$.
\item[(ii)] If $A\in \mathcal{A}$ then $A\otimes \M_n(\C)\in \mathcal A$ for every $n\in \N$.
\item[(iii)] If $A\in \mathcal A$ then every hereditary sub-C*-algebra of $A$ belongs to $\mathcal A$. In particular, every closed two-sided ideal of $A$ belongs to $\mathcal A$.
\item[(iv)] If $A\in \mathcal A$ then every quotient of $A$ belongs to $\mathcal A$.
\item[(v)] If $A,C\in \mathcal{A}$ and if
$$0\to A\to B\to C\to 0$$
is an exact sequence of C*-algebras then $B\in \mathcal{A}$.
\item[(vi)] If $A$ is locally contained in $\mathcal{A}$ then $A\in \mathcal{A}$.
\end{itemize}
\end{definition}

The following theorem is our main result (see later in Sections 2 and 3 for the definition of a RSH$_0$-algebra and of a 1-dimensional NCCW-complex): 

\begin{theorem}\label{thm: mainmain}
Let $B$ be a direct limit of a system of RSH$_0$-algebras with no dimension growth. The following statements hold:
\begin{enumerate}
\item If $B$ has a finite number of ideals then $\mathrm{sr}(B)=1$, if and only if, for every ideal $I$ of $B$ the index map $\delta\colon \mathrm{K}_1(A/I)\to \mathrm{K}_0(I)$ is trivial.

\item  If $B$ is simple, projectionless, and $\mathrm{K}_0(B)=\mathrm{K}_1(B)=0$ then $\mathrm{sr}(A\otimes B)=1$ for every C*-algebra $A\in \mathcal{A}$.

\item If $B$ is simple, $\mathrm{K}_1(B)=0$, and $B$ is either projectionless or it is unital and its only non-zero projection is its unit then $\mathrm{sr}(A\otimes B)=1$ for every C*-algebra $A$ that is approximately contained in the class of RSH$_0$-algebras with 1-dimensional spectrum. 

\end{enumerate}
Moreover, if $B$ is a simple direct limit of a sequence of 1-dimensional NCCW-complexes with $\mathrm{K}_0(B)=\mathrm{K}_1(B)=0$, and $A$ is approximately contained in the class of C*-algebras that are stably isomorphic to a commutative C*-algebra, then $A\otimes B$ is approximately contained in the class of C*-algebras that are stably isomorphic to 1-dimensional NCCW-complexes. In particular the decomposition rank and the nuclear dimension of $A\otimes B$ is one. If in addition $A$ is separable then $A\otimes B$ can be written as an inductive limit of C*-algebras that are stably isomorphic to 1-dimensional NCCW-complexes.
\end{theorem}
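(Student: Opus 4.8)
My plan is to extract from the proof of part (1) a single-extension lemma and use it as the engine for the whole theorem. For part (1) itself I would induct on the number of ideals: choosing a minimal (hence simple) nonzero ideal gives an extension $0\to I\to B\to B/I\to 0$ in which $I$ (a hereditary subalgebra) and $B/I$ (a quotient) are again direct limits of RSH$_0$-algebras with no dimension growth, and naturality of the six-term sequence propagates the vanishing of all index maps to $I$ and to $B/I$; the inductive base is $B$ simple, where the condition is vacuous and one invokes that a simple such limit has $\mathrm{sr}=1$. The inductive step is the lemma: if $\mathrm{sr}(I)=\mathrm{sr}(B/I)=1$ then $\mathrm{sr}(B)=1$ if and only if $\delta=0$. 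The forward direction is general and easy: if $\mathrm{sr}(B)=1$ then, $M_n(\widetilde B)$ having dense invertibles, every unitary over $B/I$ is a norm-limit of images of invertibles over $B$, so $\mathrm{K}_1(B)\to\mathrm{K}_1(B/I)$ is onto, i.e. $\delta=0$. The converse is the local heart of part (1): given $x\in B$ and $\epsilon>0$, approximate its image by an invertible using $\mathrm{sr}(B/I)=1$, lift, and observe that the obstruction to correcting the lift to an invertible inside $I$ is an index class in $\mathrm{K}_0(I)$ that vanishes precisely because $\delta=0$; the correction is then made using $\mathrm{sr}(I)=1$. I expect this lifting-and-correction argument to be the main difficulty of part (1).

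For parts (2) and (3) I would not try to control the (typically infinite) ideal lattice of $A\otimes B$; instead I apply the extension lemma finitely many times along a skeletal filtration and pass to the limit by lower semicontinuity of the stable rank. In part (2) I first run a structural induction over the defining clauses of $\mathcal A$: matrix amplification, hereditary subalgebras, quotients and (by locality of stable rank one) approximate containment are all compatible with ``$\mathrm{sr}(\,\cdot\otimes B)=1$'', so the work concentrates on the base algebras $\mathrm{C}_0(X)$ and the extension clause. Using approximate containment I reduce $X$ to a finite CW-complex, whose skeletal filtration expresses $\mathrm{C}_0(X)\otimes B$ after finitely many extensions out of suspension-type pieces $\mathrm{C}_0(\R^m)\otimes B$; applying the lemma at each stage works because $\mathrm{K}_0(B)=\mathrm{K}_1(B)=0$ forces, via the Künneth formula, $\mathrm{K}_*(D\otimes B)=0$ for every $D$, so every occurring index map vanishes. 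Part (3) is the same scheme with $A$ approximated by $1$-dimensional RSH$_0$-algebras $D$: here the skeletal extension of $D\otimes B$ has finite-dimensional quotient $F_0\otimes B$, and $\mathrm{K}_1(F_0\otimes B)=0$ holds already from $\mathrm{K}_1(B)=0$ (since $\mathrm{K}_1(F_0)=0$), which is exactly why only $\mathrm{K}_1(B)=0$ is needed; the projectionless, respectively unital-with-unique-projection, hypothesis is used to secure $\mathrm{sr}=1$ for the simple base algebra and its suspensions.

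For the final clause the conclusion is structural, so the extension lemma is replaced by an explicit dimension reduction. Reducing via approximate containment to the case $A\otimes B\otimes\K\cong\mathrm{C}_0(Y)\otimes B$, I would write $B=\varinjlim B_n$ with $B_n$ a $1$-dimensional NCCW-complex and seek to re-present $\mathrm{C}_0(Y)\otimes B$ as a direct limit of $1$-dimensional NCCW-complexes. The naive decomposition $\varinjlim \mathrm{C}_0(Y)\otimes B_n$ has spectral dimension $1+\dim Y$, so the point is to approximate each connecting map $B_n\to B_{n+1}$ by one whose tensor with $\mathrm{C}_0(Y)$ factors, up to $\epsilon$, through a $1$-dimensional NCCW-complex; this is possible because $B$ is simple, stably projectionless, and has trivial K-groups, which let the connecting maps be taken ``thin'' enough to absorb the dimension of $Y$. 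This dimension reduction is the principal obstacle of the whole theorem. Granting it, $A\otimes B$ is approximately contained in the algebras stably isomorphic to $1$-dimensional NCCW-complexes; since decomposition rank and nuclear dimension are at most one for such algebras and are preserved by stable isomorphism and by inductive limits, while $A\otimes B$ is not AF (so these invariants are at least one), both equal one. Finally, when $A$ is separable I would promote approximate containment to an inductive limit by a one-sided Elliott intertwining, available because $1$-dimensional NCCW-complexes are semiprojective.
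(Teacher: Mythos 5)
Your proposal inverts where the difficulty of this theorem lies, and at the two places where the real work happens it supplies no argument. In part (1), the step you single out as the main difficulty --- the lifting-and-correction lemma asserting that for $0\to I\to B\to B/I\to 0$ with $\mathrm{sr}(I)=\mathrm{sr}(B/I)=1$ one has $\mathrm{sr}(B)=1$ if and only if the index map vanishes --- is in the paper simply a citation of \cite{Nistor}*{Lemma 3}, and the induction over a finite ideal lattice is then carried out exactly as you describe. What you cannot do is ``invoke that a simple such limit has $\mathrm{sr}=1$'': the result of \cite{PhillipsIRSH} covers only unital RSH-limits, and for simple \emph{projectionless} limits of RSH$_0$-algebras this base case is precisely the new content of Theorem \ref{th: stable rank}. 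The paper proves it analytically: after normalizing the system (Proposition \ref{prop: decomposition}), it shows that simplicity and projectionlessness force the rank of $\mathrm{ev}_x(\phi_{n,l}(f(|a|)))$ to grow without bound along the system (Lemma \ref{lem: idealgen} together with \cite{PhillipsIRSH}*{Lemma 1.6}), and then corrects $a$ by a unitary using the polar-decomposition approximation for RSH$_0$-algebras (Proposition \ref{prop: polar} and Lemma \ref{lem: rank}). Without this, your induction has no base, and the statement being proved in the base case is literally an instance of part (1) itself, so invoking it is circular.

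The same gap reappears, now as a structural circularity, in your skeletal scheme for parts (2) and (3). Nistor's lemma determines the stable rank of the \emph{middle} term of an extension from that of the ideal and the quotient; it never outputs stable rank one for an ideal. In your filtration the unknown pieces $\bigoplus\mathrm{C}_0(\R^k)\otimes B$ always sit as ideals, so no sequence of applications of the lemma can reach them, and they are covered by no induction hypothesis: stable rank one does not pass to suspensions in general ($\mathrm{sr}(\mathrm{C}(\mathbb{T}))=1$ while $\mathrm{sr}(\mathrm{C}_0(\R)\otimes\mathrm{C}(\mathbb{T}))=2$), and realizing $\mathrm{C}_0(\R^k)\otimes B$ as an ideal of $\mathrm{C}([0,1]^k)\otimes B$ or $\mathrm{C}(S^k)\otimes B$ presupposes the very algebras the filtration is meant to produce. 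The K\"unneth vanishing of the index maps is the easy half; the hard half is $\mathrm{sr}(\mathrm{C}([0,1]^n)\otimes B)=1$, which the paper proves directly by induction on $n$ in Theorem \ref{thm: main1}: exhaust the noninvertibility locus of an element (Lemma \ref{lem: invertible}), obtain rank growth on a compact neighborhood $\overline W$ from simplicity and projectionlessness, and patch the resulting unitary against the invertible complement across $\partial V$ (of dimension at most $n-1$) by a path of unitaries (Lemmas \ref{lem: path} and \ref{lem: appinvertibles}); the patching step is exactly where ``every ideal of $\mathrm{C}(\partial V,B)$ has trivial $\mathrm{K}_1$'' is used, i.e.\ where the K-theoretic hypotheses enter analytically rather than formally. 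Finally, in the last clause you correctly identify the dimension reduction as the principal obstacle but only assert it (``connecting maps thin enough to absorb the dimension of $Y$''); the paper obtains it from \cite{Eilers-Loring-Pedersen}*{Theorem 4.2 (iv)} --- an inductive limit of 2-dimensional NCCW-complexes with stable rank one and trivial $\mathrm{K}_0$ is an inductive limit of 1-dimensional NCCW-complexes --- applied to $\mathrm{C}[0,1]\otimes B$ and bootstrapped over $[0,1]^n$, and even this route requires the stable rank theorem as input.
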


Recursive subhomogeneous algebras (shortly RSH-algebras) were introduced and studied in \cite{PhillipsRSH} and \cite{PhillipsIRSH}. This class of C*-algebras arise naturally in the study of the crossed product C*-algebras obtained from a minimal homeomorphism of a compact metric space. In \cite{PhillipsIRSH} it is shown that if $B$ is a unital simple direct limit of a system of  RSH-algebras with no dimension growth then the stable rank of $B$ is one. This result was generalized in \cite{Hutian} for certain simple inductive limits of some stable version of RSH-algebras. In the first part of Theorem \ref{thm: mainmain} we prove a similar result for C*-algebras with a finite number of ideals that can be written as the direct limit of some non-unital version of RSH-algebras. The proof of this result follows closely the proof of the unital simple case. The second and last part of Theorem \ref{thm: mainmain} can be applied when $B$ belongs to the class of C*-algebras classified in \cite{Razak}; in particular, for the C*-algebra $\mathcal W$. These results are quite surprising since they imply that the tensor product of $\mathcal{W}$ with an arbitrary commutative C*-algebra can be decomposed as an inductive limit of 1-dimensional NCCW-complexes; and thus, it is of stable rank one. In other words, this result state that the stable rank and the dimension of the spectrum of a C*-algebra are independent when there are no K-theoretical obstructions. The third part of Theorem \ref{thm: mainmain} can be applied to the Jiang-Su algebra. This result provide some evidence for the following statement: the operation of taking tensor products with the Jiang-Su algebra does not increase the stable rank.   

The paper consists of three sections. In Section 2 we study pullbacks of C*-algebras; specifically, the structure of the ideals, quotients, and unitizations of pullbacks. In this section we also introduce and study the class of RSH$_0$-algebras. In Section 3 we prove the results stated in Theorem \ref{thm: mainmain}.

I am grateful to George Elliott, Huaxin Lin, Christopher Phillips, and Leonel Robert for interesting discussions concerning the subject matter of this paper.

\section{Preliminary definitions and results}

\subsection{Continuous lattices}
Let us briefly recall the definition of a continuous lattice (\cite[Definition I-1.6.]{Continuous-Domains}). Let $L$ be a lattice and let $a$ and $b$ be elements of $L$, we say that $a$ is {\it compactly contained} in $b$, denoted by $a\ll b$, if for every increasing net $(b_i)_{i\in I}$ with $b\le \sup_i b_i$, there exists $j$ such that $a\le b_j$. We say that $L$ is a {\it continuous lattice} if $L$ is complete (i.e., every subset of $L$ has a least upper bound or supremum), and if $a=\sup_{a'\ll a}a'$ for every $a\in L$. An element $a\in A$ is said to be {\it compact} if $a\ll a$.

Let $A$ be a C*-algebra and let $\mathrm{Lat}(A)$ denote the lattice of closed two-sided ideals of $A$. It was shown in \cite[Proposition I-1.21.2.]{Continuous-Domains} that $\mathrm{Lat}(A)$ is a continuous lattice. Given a subset $S$ of $A$ we denote by $\I(S)$ the closed two-sided ideal generated by $S$. If  $\phi\colon A\to B$ is a *-homomorphism of C*-algebras then the map $\check\phi\colon \Lat(A)\to \Lat(B)$ defined by $\check \phi(I)=\I(\phi(I))$ is a morphism in the category of continuous lattice. That is, is a lattice map that preserves the compact containment relation and suprema of arbitrary subsets.  

Throughout this paper an ideal of a C*-algebra will mean a closed two-sided ideal except when we refer to the Pedersen ideal (i.e., the smallest dense two sided-ideal of a C*-algebra (\cite[Theorem 5.6.1]{Pedersen})). The Pedersen ideal of a C*-algebra $A$ will be denoted by $\mathrm{Ped}(A)$.

The following proposition characterize the relation of compact containment of ideals. This is a refinement of 
\cite[Proposition I-1.21.1]{Continuous-Domains}.

\begin{proposition}\label{prop: far below}
Let $A$ be a C*-algebra and let $I$ and $J$ be closed two-sided ideals of $A$. Then $I\ll J$ if and only if there exist $a\in J_+$ and $\epsilon>0$ such that $I\subseteq \I((a-\epsilon)_+)$.
\end{proposition}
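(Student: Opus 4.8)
The plan is to deduce both implications straight from the definition of the compact-containment relation in $\Lat(A)$, the only genuine tool being the continuous functional calculus and its compatibility with quotient maps: if $\pi\colon A\to A/K$ is the quotient by an ideal $K$ and $x\in A_+$, then $\pi((x-\epsilon)_+)=(\pi(x)-\epsilon)_+$, and this vanishes exactly when the spectrum of $\pi(x)$ lies in $[0,\epsilon]$.

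First I would treat the direction $(\Leftarrow)$. Assume $a\in J_+$ and $\epsilon>0$ with $I\subseteq\I((a-\epsilon)_+)$, and let $(J_i)_{i}$ be an increasing net of ideals with $J\le\sup_i J_i=\overline{\bigcup_i J_i}$. Since $a\in J$, there is an index $i$ and an element $b\in J_i$ with $\|a-b\|<\epsilon$. In the quotient $A/\I(b)$ the image of $b$ is $0$, so the image of $a$ equals the image of $a-b$ and therefore has norm $<\epsilon$; as this image is positive, $(a-\epsilon)_+$ maps to $0$, i.e. $(a-\epsilon)_+\in\I(b)\subseteq J_i$. Hence $I\subseteq\I((a-\epsilon)_+)\subseteq J_i$, which is exactly what the definition of $I\ll J$ requires.

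For the converse $(\Rightarrow)$ the idea is to realize $J$ as the supremum of a directed family of ideals of precisely the advertised shape and then feed this family into the hypothesis $I\ll J$. Consider $\Sigma=\{\I((a-\epsilon)_+):a\in J_+,\ \epsilon>0\}$. Each member sits inside $J$, and because $\|(a-\epsilon)_+-a\|\le\epsilon$ and $J$ is generated by $J_+$, the supremum of $\Sigma$ is all of $J$. If $\Sigma$ is upward directed it may be viewed as an increasing net indexed by itself, and then $I\ll J$ produces a single pair $(a,\epsilon)$ with $I\subseteq\I((a-\epsilon)_+)$, completing the proof.

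The main obstacle is therefore the directedness of $\Sigma$. Given members coming from $(a,\epsilon)$ and $(b,\delta)$, I would set $c=a+b\in J_+$ and $\gamma=\min(\epsilon,\delta)$ and show that both ideals are contained in $\I((c-\gamma)_+)$. The key point is the inclusion $(a-\gamma)_+\in\I((c-\gamma)_+)$: in the quotient by $\I((c-\gamma)_+)$ the image of $c$ has spectrum in $[0,\gamma]$, hence so does the image of $a$ since $0\le a\le c$, so $(a-\gamma)_+$ is killed. Together with the elementary functional-calculus inclusion $\I((a-\epsilon)_+)\subseteq\I((a-\gamma)_+)$, valid for $\gamma\le\epsilon$ because $(a-\epsilon)_+$ is a function of $(a-\gamma)_+$ vanishing at $0$, and the symmetric statements for $b$, this yields $\I((a-\epsilon)_+),\I((b-\delta)_+)\subseteq\I((c-\gamma)_+)$, establishing directedness. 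I expect the bookkeeping of these functional-calculus inclusions, rather than any deep idea, to be the only delicate part.
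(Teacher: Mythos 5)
Your proof is correct, but it takes a genuinely different route from the paper's. The paper does not prove either direction from scratch: it cites \cite{Continuous-Domains}*{Proposition I-1.21.1} for the implication $I\subseteq \I((a-\epsilon)_+)\Rightarrow I\ll J$, and for the converse it uses that same reference to reduce to the statement that every ideal $\I(a_1,\dots,a_n)$ with $a_1,\dots,a_n\in \Ped(A)_+$ sits inside some $\I((a-\epsilon)_+)$; that reduction is then carried out with approximate units of $\Her(a)$ and R{\o}rdam's lemma \cite{Kirchberg-Rordam}*{Lemma 2.2} (producing $d_j$ with $(b_j-\delta)_+=d_j^*u_ib_ju_id_j$), together with the explicit construction of the Pedersen ideal. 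You instead work directly from the definition of compact containment in $\Lat(A)$: for $(\Leftarrow)$ you approximate $a$ by an element of some $J_i$ and observe that $(a-\epsilon)_+$ then dies in the quotient, hence lands in $J_i$; for $(\Rightarrow)$ you exhibit $J$ as the supremum of the directed family $\Sigma=\{\I((a-\epsilon)_+): a\in J_+,\ \epsilon>0\}$ and let the definition of $\ll$ pick out a single member. The directedness argument is the real content, and your two lemmas are both sound: $0\le a\le c$ forces $(a-\gamma)_+\in\I((c-\gamma)_+)$ (pass to the quotient by $\I((c-\gamma)_+)$, where $\pi(c)$, hence $\pi(a)$, has norm at most $\gamma$), and $\I((a-\epsilon)_+)\subseteq\I((a-\gamma)_+)$ for $\gamma\le\epsilon$ via the identity $(a-\epsilon)_+=\bigl((a-\gamma)_+-(\epsilon-\gamma)\bigr)_+$. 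What your approach buys is self-containment: no Pedersen-ideal machinery, no R{\o}rdam lemma, no external domain-theory citation, and it makes transparent that the ideals $\I((a-\epsilon)_+)$ form a basis (in the domain-theoretic sense) for $\Lat(A)$. What the paper's approach buys is economy given its toolkit: the compression of finitely many generators into a single $(a-\epsilon)_+$ is exactly the kind of manipulation reused elsewhere in the paper, and leaning on \cite{Continuous-Domains} shortens the argument considerably on the page.
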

\begin{proof}
It was shown in \cite[Proposition I-1.21.1]{Continuous-Domains} that $I\subseteq \I((a-\epsilon)_+)$ implies $I\ll J$. Let us prove the opposite implication. By \cite[Proposition I-1.21.1]{Continuous-Domains} it is sufficient to show that for every $a_1, a_2, \cdots, a_n\in \Ped(A)_+$ there exist $a\in A$ and $\epsilon>0$ such that 
\[
\I(a_1, a_2, \cdots, a_n)\subseteq \I((a-\epsilon)_+).
\]
Suppose that $a_i=(b_i-\delta)_+$, where $b_1, b_2\cdots, b_n\in A_+$ and $\delta>0$. Set $a=\sum_{j=1}^n b_j$. Take an approximate unit $(u_i)_{i=1}^\infty$ of the hereditary algebra $\Her(a)$ such that $u_iau_i\le \left(a-1/i\right)_+$ 
(e.g., $u_i=h_i(a)$ with $h_i(t)=\frac{1}{t}(t-1/i)_+$). Then $u_ib_ju_i\to b_j$ since $b_1, b_2, \cdots, b_n\in \Her(a)$. By \cite[Lemma 2.2]{Kirchberg-Rordam} there exist $i\ge 1$ and $d_1, d_2, \cdots, d_n\in A$ such that 
\begin{align*}
(b_j-\delta)_+=d_j^*u_ib_ju_id_j,
\end{align*}
for all $1\le j\le n$.
It follows that
\begin{align*}
\I((b_1-\delta)_+, (b_2-\delta)_+, \cdots, (b_n-\delta)_+ )&\subseteq \I(u_ib_1u_i, u_ib_2u_i, \cdots, u_ib_nu_i)\\
&\subseteq \I(\sum_{j=1}^nu_ib_ju_i)\\
&\subseteq \I( (a-1/i)_+).
\end{align*}

Now let us prove the general case. By the construction of the Pedersen ideal of a C*-algebra (see the proof of \cite[Theorem 5.6.1]{PedersenBook}) there are $c_1, c_2, \cdots, c_n\in A_+$ and continuous functions $f_1, f_2, \cdots, f_n\colon (0,\infty)\to [0,\infty)$ with compact support such that
\[
a_1+a_2+\cdots+a_n\le f_1(c_1)+f_2(c_2)+\cdots +f_n(c_n).
\]
Choose positive elements $(b_i)_{i=1}^n\in A$ and a real number $\delta>0$ such that $f_i(c_i)=(b_i-\delta)_+$. It follows that
\begin{align*}
\I(a_1, a_2, \cdots, a_n)&\subseteq \I(a_1+a_2+\cdots+a_n)\\
&\subseteq \I(f_1(c_1), f_2(c_2), \cdots, f_n(c_n))\\
&\subseteq \I((b_1-\delta)_+, (b_2-\delta)_+, \cdots, (b_n-\delta)_+).
\end{align*}
Set $a=\sum_{j=1}^nb_j$. Then by the previous argument there exists $\epsilon>0$ such that
$$\I((b_1-\delta)_+, (b_2-\delta)_+, \cdots, (b_n-\delta)_+)\subseteq \I((a-\epsilon)_+).$$
\end{proof}

\begin{corollary}\label{cor: compact}
Let $A$ be a C*-algebra. The following are equivalent:
\begin{enumerate}
\item $I$ is a compact ideal of $A$ (i.e., $I\ll I$);
\item There is $a\in I_+$ and $\epsilon>0$ such that $I=\I((a-\epsilon)_+)$;
\item The spectrum of $I$ is compact.
\end{enumerate} 
\end{corollary}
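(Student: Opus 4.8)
The plan is to prove the cycle of implications using Proposition \ref{prop: far below} together with the identification of the spectrum of $I$ with its primitive ideal space $\mathrm{Prim}(I)$ and the standard correspondence between closed ideals of $I$ and open subsets of $\mathrm{Prim}(I)$, under which a family of ideals has union (closure of the sum) equal to $I$ if and only if the associated open sets cover $\mathrm{Prim}(I)$, and $\I(b_1,\dots,b_n)=I$ iff the ideal $\I(b_1,\dots,b_n)$ is contained in no primitive ideal.

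First I would dispatch (1) $\Leftrightarrow$ (2), which is essentially a restatement of Proposition \ref{prop: far below} with $J=I$. If $I\ll I$, the proposition produces $a\in I_+$ and $\epsilon>0$ with $I\subseteq \I((a-\epsilon)_+)$; since $a\in I_+$, functional calculus gives $(a-\epsilon)_+\in I$, whence $\I((a-\epsilon)_+)\subseteq I$ and equality follows. Conversely, if $I=\I((a-\epsilon)_+)$ with $a\in I_+$, then $I\subseteq \I((a-\epsilon)_+)$ holds with $a\in I_+$, so the easy direction of the proposition gives $I\ll I$.

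Next, for (2) $\Rightarrow$ (3), I would show $\mathrm{Prim}(I)$ is compact directly. Let $(J_\lambda)$ be a family of closed ideals whose associated open sets cover $\mathrm{Prim}(I)$, equivalently $\overline{\sum_\lambda J_\lambda}=I$. Writing $(a-\epsilon)_+=((a-\epsilon/2)_+-\epsilon/2)_+$ and approximating the positive element $(a-\epsilon/2)_+\in I$ to within $\epsilon/2$ by a positive element $x$ lying in a finite partial sum $J'=J_{\lambda_1}+\cdots+J_{\lambda_m}$, I can invoke \cite[Lemma 2.2]{Kirchberg-Rordam} to write $(a-\epsilon)_+=d^*xd$ for some $d$. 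Since $J'$ is an ideal and $x\in J'$, this places $(a-\epsilon)_+\in J'$, hence $I=\I((a-\epsilon)_+)\subseteq J'\subseteq I$; thus $J'=I$ and the corresponding finitely many open sets already cover $\mathrm{Prim}(I)$.

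Finally, for (3) $\Rightarrow$ (2), I would run the covering argument in the other direction. For each $P\in \mathrm{Prim}(I)$ there is $c\in I_+\setminus P$, so $(c-\eta)_+\notin P$ for small $\eta>0$; hence the open sets $\{P:(c-\eta)_+\notin P\}$, taken over $c\in I_+$ and $\eta>0$, cover $\mathrm{Prim}(I)$. Compactness yields finitely many generators, and after replacing the various cut-off constants by their minimum I obtain elements $(a_1-\epsilon)_+,\dots,(a_n-\epsilon)_+$ with $\I((a_1-\epsilon)_+,\dots,(a_n-\epsilon)_+)=I$. Applying the construction in the first part of the proof of Proposition \ref{prop: far below} to these elements (with $b_i=a_i$, $\delta=\epsilon$, and $a=a_1+\cdots+a_n\in I_+$) produces a single $\delta>0$ with $I\subseteq \I((a-\delta)_+)\subseteq I$, which is exactly (2). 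The main point to get right is the perturbation step in (2) $\Rightarrow$ (3)—ensuring the approximant can be taken positive inside a finite partial sum and that the Kirchberg--Rordam lemma then lands $(a-\epsilon)_+$ \emph{exactly} inside that ideal—since everything else is bookkeeping with the ideal/open-set dictionary; conceptually the whole corollary just expresses that, under the frame isomorphism $\Lat(I)\cong$ the open-set lattice of $\mathrm{Prim}(I)$, the top element is way-below itself precisely when the space is compact.
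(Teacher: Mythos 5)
Your proof is correct, and its first half coincides with the paper's: the equivalence of (1) and (2) is exactly Proposition \ref{prop: far below} applied with $J=I$, together with the remark that $a\in I_+$ forces $\I((a-\epsilon)_+)\subseteq I$. The difference is in how (3) is brought in. The paper hinges both equivalences at (1) and settles (1)$\Leftrightarrow$(3) in one line, via the identification of $\Lat(I)$ with the lattice of open subsets of the spectrum, under which $I\ll I$ becomes the statement that the whole space is way-below itself, i.e.\ compact. You instead hinge at (2) and prove (2)$\Leftrightarrow$(3) by concrete C*-arguments: for (2)$\Rightarrow$(3) you approximate $(a-\epsilon/2)_+$ by a positive element $x$ of a finite sum $J'=J_{\lambda_1}+\cdots+J_{\lambda_m}$ (which is indeed a closed ideal, being a finite sum of closed ideals) and use \cite[Lemma 2.2]{Kirchberg-Rordam} to land $(a-\epsilon)_+=d^*xd$ exactly in $J'$, forcing $J'=I$; for (3)$\Rightarrow$(2) you extract a finite subcover from the open sets $\{P:(c-\eta)_+\notin P\}$ and feed the resulting finitely many generators back into the construction from the first paragraph of the proof of Proposition \ref{prop: far below}. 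Both routes rest on the same ideal/open-set dictionary, but the paper's one-liner implicitly also uses that this dictionary intertwines the compact-containment relation on $\Lat(I)$ with the way-below relation on open sets (so that ``top $\ll$ top'' equals compactness of the space); your argument never needs that lattice-theoretic fact and is self-contained modulo tools the paper already employs (the Kirchberg--R{\o}rdam lemma and the finite-sum construction), at the cost of length. The two delicate points you flagged do hold: a positive approximant in $J'$ is obtained by passing to the self-adjoint part and then the positive part, which stays in the C*-algebra $J'$ by functional calculus and only doubles the estimate, and the conclusion $d^*xd\in J'$ is automatic since $J'$ is a closed two-sided ideal containing $x$.
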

\begin{proof}
The equivalence of (i) and (ii) follows by the previous proposition. The equivalence of (i) and (iii) follows using the standard identification of the closed two-sided ideals of a C*-algebra with the open subsets of its spectrum.
\end{proof}

Let $\phi\colon A\to B$ be a *-homomorphism of C*-algebras. We denote by $\check\phi\colon \Lat(A)\to \Lat(B)$ the lattice map defined by $\check \phi(I)=\I(\phi(I))$. The following proposition was proved in \cite[Proposition 3.1.9 (ii)]{LeonelPhD}:
\begin{proposition}\label{lem: idealIL}
Let $A=\varinjlim (A_n, \phi_{n, m})$ be an inductive limit of  C*-algebras. Let $I$, $J$, and $K$ be ideals of $A_k$ such that $I\ll J$ and 
$$\check\phi_{k,\infty}(J)\le \check\phi_{k,\infty}(K).$$ 
Then there is $m\ge k$ such that 
$$\check\phi_{k,m}(I)\ll \check\phi_{k,m}(K).$$
\end{proposition}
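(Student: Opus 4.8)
The plan is to turn the abstract relation into a concrete witness, transport it to the limit, and then push a slightly shrunken version back down to a finite stage. First I would invoke Proposition \ref{prop: far below} for the relation $I\ll J$ in $\Lat(A_k)$ to produce $a\in J_+$ and $\epsilon>0$ with $I\subseteq \I((a-\epsilon)_+)$. Applying the lattice morphism $\check\phi_{k,m}$ (which takes the ideal generated by a single element to the ideal generated by its image and commutes with the continuous functional calculus through $\phi_{k,m}$) gives $\check\phi_{k,m}(I)\subseteq \I((\phi_{k,m}(a)-\epsilon)_+)$ for every $m$, so by the left-downward closure of $\ll$ it suffices to find $m$ with $\I((\phi_{k,m}(a)-\epsilon)_+)\ll\check\phi_{k,m}(K)$. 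Setting $b=\phi_{k,\infty}(a)$, the hypothesis $\check\phi_{k,\infty}(J)\le\check\phi_{k,\infty}(K)$ gives $b\in\check\phi_{k,\infty}(K)=\I(\phi_{k,\infty}(K))$.

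The second step extracts, at the level of the limit, a single generator carrying a cushion. I would use that $\check\phi_{k,\infty}(K)$ is the supremum of the upward directed family $\{\I(\phi_{k,\infty}(\kappa)):\kappa\in K_+\}$, and that each $\I(\phi_{k,\infty}(\kappa))$ is in turn the supremum of the directed family $\{\I(\phi_{k,\infty}((\kappa-\delta)_+)):\delta>0\}$. Since $(x-s)_+\in\I((x-s/2)_+)$ by functional calculus, Proposition \ref{prop: far below} yields the compact containments $\I((b-\epsilon/2)_+)\ll\I((b-\epsilon/4)_+)\ll\I(b)$. Feeding these into the definition of $\ll$ against the two directed suprema above (and using that $\ll$ composes with $\le$ on the right) I first extract $\kappa\in K_+$ with $\I((b-\epsilon/4)_+)\subseteq\I(\phi_{k,\infty}(\kappa))$, and then $\delta>0$ with $\phi_{k,\infty}((a-\epsilon/2)_+)=(b-\epsilon/2)_+\in\I(\phi_{k,\infty}((\kappa-\delta)_+))$.

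The heart of the argument, and the step I expect to be the main obstacle, is a descent lemma: if $p,q\in(A_k)_+$ and $\phi_{k,\infty}(p)\in\I(\phi_{k,\infty}(q))$, then for every $\mu>0$ there is $m\ge k$ with $\phi_{k,m}((p-\mu)_+)\in\I(\phi_{k,m}(q))$. To prove it I would use $\phi_{k,\infty}(p)\in\overline{A\,\phi_{k,\infty}(q)\,A}$ to find $v\in A$ with $\|(\phi_{k,\infty}(p)-\mu/2)_+-v\,\phi_{k,\infty}(q)\,v^*\|$ small, approximate $v$ by $\phi_{m,\infty}(\tilde v)$ with $\tilde v\in A_m$, and then use that norms in $A$ are approximated by the (decreasing) norms at finite stages to obtain $\|(\phi_{k,m}(p)-\mu/2)_+-\tilde v\,\phi_{k,m}(q)\,\tilde v^*\|<\mu/2$ at some stage $m$; crucially both elements are positive, so \cite[Lemma 2.2]{Kirchberg-Rordam} applies and yields $(\phi_{k,m}(p)-\mu)_+=d\,(\tilde v\,\phi_{k,m}(q)\,\tilde v^*)\,d^*\in\I(\phi_{k,m}(q))$. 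The delicate bookkeeping is keeping every intermediate element positive and tracking the successive cut-downs so that the final cut matches $(\phi_{k,m}(a)-\epsilon)_+$. Applying the lemma with $p=(a-\epsilon/2)_+$, $q=(\kappa-\delta)_+$ and $\mu=\epsilon/2$ produces $m$ with $(\phi_{k,m}(a)-\epsilon)_+\in\I((\phi_{k,m}(\kappa)-\delta)_+)$. Finally, since $\phi_{k,m}(\kappa)\in\check\phi_{k,m}(K)_+$, Proposition \ref{prop: far below} gives $\I((\phi_{k,m}(\kappa)-\delta)_+)\ll\check\phi_{k,m}(K)$; as $\ll$ is closed under passing to smaller ideals on the left, the chain $\check\phi_{k,m}(I)\subseteq\I((\phi_{k,m}(a)-\epsilon)_+)\subseteq\I((\phi_{k,m}(\kappa)-\delta)_+)\ll\check\phi_{k,m}(K)$ gives $\check\phi_{k,m}(I)\ll\check\phi_{k,m}(K)$, as desired.
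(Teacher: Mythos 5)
Your argument cannot be checked against an internal proof, because the paper does not actually prove Proposition \ref{lem: idealIL}: it quotes it from Robert's thesis (\cite[Proposition 3.1.9 (ii)]{LeonelPhD}). Judged on its own terms, your architecture is sound and stays inside the paper's own toolkit: Proposition \ref{prop: far below} to extract the witness $a\in J_+$, $\epsilon>0$ with $I\subseteq \I((a-\epsilon)_+)$; the two directed suprema over $\kappa\in K_+$ and $\delta>0$, fed into the compact containments $\I((b-\epsilon/2)_+)\ll\I((b-\epsilon/4)_+)\ll\I(b)$; and a descent to a finite stage via \cite[Lemma 2.2]{Kirchberg-Rordam}. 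Steps 1 and 2 are correct as written, as is the closing chain $\check\phi_{k,m}(I)\subseteq\I((\phi_{k,m}(a)-\epsilon)_+)\subseteq\I((\phi_{k,m}(\kappa)-\delta)_+)\ll\check\phi_{k,m}(K)$.

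There is, however, one step inside your descent lemma that fails as literally stated: from $\phi_{k,\infty}(p)\in\I(\phi_{k,\infty}(q))$ you cannot in general find a \emph{single} $v\in A$ making $\|(\phi_{k,\infty}(p)-\mu/2)_+-v\,\phi_{k,\infty}(q)\,v^*\|$ small. Take $A=\M_2(\C)$, $q$ a rank-one projection, and $p=1$: then $p\in\I(q)=\M_2(\C)$, but $vqv^*$ has rank at most one, so $\|p-vqv^*\|\ge 1$ for every $v$. The correct (and standard) statement is that positive elements of $\overline{AqA}$ are approximated by \emph{finite sums} $\sum_{i=1}^n v_iqv_i^*$: approximate $p^{1/2}$ by a finite sum $\sum_i x_iqy_i$, expand $p\approx\bigl(\sum_i x_iqy_i\bigr)\bigl(\sum_i x_iqy_i\bigr)^*$, dominate the result by a multiple of $\sum_i x_iq^2x_i^*\le \|q\|\sum_i x_iqx_i^*$, and cut down using the same Kirchberg--R{\o}rdam lemma. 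This repair costs nothing downstream: approximating each $v_i$ from a finite stage, the element $\sum_i \tilde v_i\,\phi_{k,m}(q)\,\tilde v_i^*$ is still positive and still lies in $\I(\phi_{k,m}(q))$, so \cite[Lemma 2.2]{Kirchberg-Rordam} applies verbatim and gives $(\phi_{k,m}(p)-\mu)_+\in\I(\phi_{k,m}(q))$. With that single correction your proof is complete, and it is a genuinely self-contained alternative to citing the thesis, built only on Proposition \ref{prop: far below} and standard inductive-limit approximation.
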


\begin{lemma}\label{idealsIL}
Let $A=\varinjlim (A_n, \phi_{n, m})$ be an inductive limit of  C*-algebras. Suppose that $A$ has compact spectrum. Then there exist $k\ge 1$ and ideals $I_n\subseteq A_n$, $n\ge k$, with compact spectrum such that $\check\phi_{n, n+1}(I_n)=I_{n+1}$ and $A=\varinjlim (I_n, \phi_{n, m}|_{I_n})$.
\end{lemma}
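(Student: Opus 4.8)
The plan is to translate the hypothesis ``$A$ has compact spectrum'' into the lattice-theoretic statement $A\ll A$ in $\Lat(A)$ (Corollary \ref{cor: compact}), to produce a single \emph{compact} ideal $I_k$ inside some $A_k$ whose image generates all of $A$, and then to transport this ideal forward along the connecting maps. Once such an $I_k$ is in hand I would simply set $I_n:=\check\phi_{k,n}(I_k)$ for $n\ge k$. Functoriality of the assignment $\phi\mapsto\check\phi$ (namely $\check\phi_{n,n+1}\circ\check\phi_{k,n}=\check\phi_{k,n+1}$, together with $\I(\psi(\I(S)))=\I(\psi(S))$) immediately gives $\check\phi_{n,n+1}(I_n)=I_{n+1}$, and each $I_n$ is again compact: writing $I_k=\I((a-\epsilon)_+)$ with $a\in(A_k)_+$ via Corollary \ref{cor: compact}, one has $I_n=\I((\phi_{k,n}(a)-\epsilon)_+)$, which is compact by the same corollary. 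Thus conditions (2) and (3) of the statement come essentially for free, and the work concentrates on producing $I_k$ and on identifying the inductive limit.

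The heart of the argument, and the step I expect to be the main obstacle, is producing $I_k$. I would exploit that $\Lat(A_n)$ is an \emph{algebraic} lattice: by Corollary \ref{cor: compact} its compact elements are precisely the ideals of the form $\I((a-\epsilon)_+)$, and every ideal is the supremum of the compact ideals it contains, a family that is upward directed by the computation in the proof of Proposition \ref{prop: far below} (the ideals generated by $(a-\epsilon)_+$ and $(b-\eta)_+$ are jointly dominated by one of the form $\I((a+b-\zeta)_+)$). Because each $\check\phi$ preserves arbitrary suprema and carries compact ideals to compact ideals (indeed $\check\phi_{n,m}(\I((a-\epsilon)_+))=\I((\phi_{n,m}(a)-\epsilon)_+)$), the family $\mathcal F=\{\check\phi_{n,\infty}(C):n\ge1,\ C\text{ a compact ideal of }A_n\}$ is upward directed in $\Lat(A)$ --- any two of its members are pushforwards of compact ideals from a common stage $A_l$, where compact ideals are directed --- and its supremum equals $\sup_n\check\phi_{n,\infty}(A_n)=A$, the last equality holding because $\bigcup_n\phi_{n,\infty}(A_n)$ is dense in $A$. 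Since $A\ll A$, the definition of compact containment applied to $\mathcal F$ (viewed as an increasing net) furnishes a single member $\check\phi_{k,\infty}(I_k)\in\mathcal F$ with $\check\phi_{k,\infty}(I_k)=A$, where $I_k$ is a compact ideal of $A_k$.

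It then remains to identify $\varinjlim(I_n,\phi_{n,m}|_{I_n})$ with $A$. The restricted maps $\phi_{n,n+1}|_{I_n}\colon I_n\to I_{n+1}$ are well defined, since $\phi_{n,n+1}(I_n)\subseteq\I(\phi_{n,n+1}(I_n))=I_{n+1}$, so the system makes sense and there is a natural $*$-homomorphism $\Psi\colon\varinjlim I_n\to A$. This $\Psi$ is isometric on each $I_n$ (the norm of $\phi_{n,m}(x)$ is the same computed in $I_m$ or in $A_m$), hence injective, with closed image $J:=\overline{\bigcup_n\phi_{n,\infty}(I_n)}$. A short computation shows $J$ is an ideal of $A$: for $x\in I_n$ and $y\in A_m$ with $m\ge n$ one has $\phi_{m,\infty}(y)\phi_{n,\infty}(x)=\phi_{m,\infty}(y\,\phi_{n,m}(x))\in\phi_{m,\infty}(I_m)\subseteq J$, because $\phi_{n,m}(x)\in I_m$ and $I_m$ is an ideal of $A_m$. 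Since $J$ is an ideal containing $\phi_{n,\infty}(I_n)$, it contains $\check\phi_{n,\infty}(I_n)=\check\phi_{k,\infty}(I_k)=A$; hence $J=A$ and $\Psi$ is an isomorphism, giving $A=\varinjlim(I_n,\phi_{n,m}|_{I_n})$.

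The genuinely delicate point is the second paragraph: everything hinges on recognizing that $\Lat(A_n)$ is algebraic, so that the compact containment $A\ll A$ can be ``pulled back'' to a single compact ideal in one of the $A_n$. An alternative would be to first extract $k$ with $\check\phi_{k,\infty}(A_k)=A$ (again from $A\ll A$) and then perturb a positive element realizing $A=\I((c-\epsilon)_+)$ to a positive element of $A_k$, using a R\o rdam-type Cuntz comparison to control the generated ideals; but the lattice argument avoids these $\epsilon$-estimates entirely and is the route I would take.
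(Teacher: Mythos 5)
Your first and third paragraphs are fine: pushing a compact ideal forward along $\check\phi$ preserves compactness, and your identification of $\varinjlim(I_n,\phi_{n,m}|_{I_n})$ with $A$ is correct. The gap is exactly where you feared it, in the second paragraph, and it is fatal as written. You assert that $\Lat(A_n)$ is \emph{algebraic}, quoting Corollary \ref{cor: compact} as saying that the compact elements are ``precisely the ideals of the form $\I((a-\epsilon)_+)$'' and that every ideal is the supremum of the compact ideals it contains. This misreads the corollary: there the generator is required to lie in the ideal itself, $a\in I_+$ with $I=\I((a-\epsilon)_+)$, and that requirement cannot be dropped. For an arbitrary $a\in A_+$ the ideal $\I((a-\epsilon)_+)$ is way below $A$ (Proposition \ref{prop: far below}) but is in general \emph{not} compact. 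Concretely, in $A_n=\mathrm{C}_0(\mathbb{R})$ one has $\I((a-\epsilon)_+)=\mathrm{C}_0(\{a>\epsilon\})\neq 0$ whenever $\|a\|>\epsilon$, yet the only ideal of $\mathrm{C}_0(\mathbb{R})$ with compact spectrum is $\{0\}$ (a compact ideal of $\mathrm{C}_0(X)$ is $\mathrm{C}_0(U)$ with $U$ compact \emph{and} open). So $\Lat(\mathrm{C}_0(\mathbb{R}))$ is continuous but has no nonzero compact elements: algebraicity fails. Consequently your equality $\sup\mathcal F=\sup_n\check\phi_{n,\infty}(A_n)$ is unjustified, since it rests on $A_n=\sup\{C\mid C\subseteq A_n \text{ compact}\}$. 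Because the finite-stage algebras may have no nonzero compact ideals at all, the claim $\sup\mathcal F=A$ is (granting your correct first and third paragraphs) essentially equivalent to the lemma itself; your argument assumes the crux rather than proving it.

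The paper's proof shows what is actually needed: work only with the way-below relation, which continuity genuinely provides, and manufacture compactness at a \emph{later} stage of the system. From $A\ll A$ one first finds $m$ with $\check\phi_{m,\infty}(A_m)=A$; writing $A_m=\sup_{I\ll A_m}I$ and then $K=\sup_{J\ll K}J$, two further applications of $A\ll A$ produce ideals $J\ll K\ll A_m$ with $\check\phi_{m,\infty}(J)=\check\phi_{m,\infty}(K)=A$. The tool your proposal is missing is Proposition \ref{lem: idealIL}: applied to $K\ll A_m$ and $\check\phi_{m,\infty}(A_m)\le\check\phi_{m,\infty}(J)$, it yields a finite stage $k\ge m$ with $\check\phi_{m,k}(K)\ll\check\phi_{m,k}(J)$; since also $\check\phi_{m,k}(J)\le\check\phi_{m,k}(K)$, this forces $\check\phi_{m,k}(J)\ll\check\phi_{m,k}(J)$, i.e., $\check\phi_{m,k}(J)$ is a compact ideal of $A_k$ with full image at infinity. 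At that point your own first and third paragraphs finish the argument with $I_n=\check\phi_{m,n}(J)$ for $n\ge k$. Even if you replace ``compact ideals of $A_n$'' by ``ideals way below $A_n$'' (a family that is directed and whose pushforwards do have supremum $A$), you still end up needing Proposition \ref{lem: idealIL} to descend from a relation holding at infinity to one holding at a finite stage; there is no purely lattice-internal shortcut of the kind you propose.
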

\begin{proof}
Let $A$ be as in the statement of the lemma. Then $A\ll A$ by Corollary \ref{cor: compact}. Also, we have
$$A=\overline{\bigcup_n\I(\phi_{n,\infty}(A_n))}=\sup_n \check\phi_{n,\infty}(A_n).$$
By the definition of the compact containment relation there is $m$ such that $\check\phi_{m,\infty}(A_m)=A$. Write $A_m=\sup_{I\ll A_m}I$. Then
$$A=\check\phi_{m,\infty}(A_m)=\sup_{I\ll A_m}\check\phi_{m,\infty}(I).$$
Since $A\ll A$ there are ideals $J, K\subseteq A_m$ with $J\ll K\ll A_m$ such that 
$$\check\phi_{m,\infty}(J)=\check\phi_{m,\infty}(K)=A.$$ 
This implies, by Proposition \ref{lem: idealIL} applied to $I'=K$, $J'=A_n$, and $K'=J$, that $\check\phi_{m,k}(K)\ll\check\phi_{m,k}(J)$ for some $k\ge m$. Hence, 
$$\check\phi_{m,k}(J)\ll \check\phi_{m,k}(K)\ll \check\phi_{m,k}(J).$$
In other words, the spectrum of $\check\phi_{m,k}(J)$ is compact. The statement of the lemma now follows by taking $I_n=\check\phi_{m,n}(J)$. 
\end{proof}

\begin{lemma}\label{lem: idealgen}
Let $A=\varinjlim (A_n,\phi_{n,m})$ be an inductive limit of C*-algebras such that $\check\phi_{n,n+1}(A_n)=A_{n+1}$ for all $n$. Let $a\in A_m$, for some $m$, be such that $\I(\phi_{m,\infty}(a))=A$ and suppose that $A_m$ has compact spectrum. Then there exists $l\ge m$ such that $\I(\phi_{m,k}(a))=A_k$ for all $k\ge l$. 
\end{lemma}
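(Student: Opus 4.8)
The plan is to translate the statement into the language of the continuous lattice $\Lat$ and then reduce it to Proposition \ref{lem: idealIL}. I would first record two elementary consequences of the hypothesis $\check\phi_{n,n+1}(A_n)=A_{n+1}$. Using the functoriality of $I\mapsto\check\phi(I)=\I(\phi(I))$ — namely $\check{(\psi\circ\phi)}=\check\psi\circ\check\phi$, which follows from $\psi(\I(S))\subseteq\I(\psi(S))$ for a $*$-homomorphism $\psi$ — one gets $\check\phi_{m,k}(A_m)=A_k$ for every $k\ge m$ and, passing to the supremum, $\check\phi_{m,\infty}(A_m)=A$ (this last supremum identity is exactly the one used in the proof of Lemma \ref{idealsIL}). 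The same functoriality yields $\check\phi_{m,k}(\I(a))=\I(\phi_{m,k}(a))$ for every $k$, finite or $\infty$. Hence the hypothesis $\I(\phi_{m,\infty}(a))=A$ reads $\check\phi_{m,\infty}(\I(a))=A$, and the conclusion to be proved, $\I(\phi_{m,k}(a))=A_k$, reads $\check\phi_{m,k}(\I(a))=A_k$.

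Next, since $A_m$ has compact spectrum, Corollary \ref{cor: compact} gives $A_m\ll A_m$. I would then apply Proposition \ref{lem: idealIL} with base index $m$ and the choice $I=A_m$, $J=A_m$, $K=\I(a)$. The first hypothesis $I\ll J$ is precisely $A_m\ll A_m$, and the second hypothesis $\check\phi_{m,\infty}(J)\le\check\phi_{m,\infty}(K)$ is $A=\check\phi_{m,\infty}(A_m)\le\check\phi_{m,\infty}(\I(a))=A$, which holds with equality. The proposition then produces an index $l\ge m$ with $\check\phi_{m,l}(A_m)\ll\check\phi_{m,l}(\I(a))$, that is, $A_l\ll\check\phi_{m,l}(\I(a))$.

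To finish, I would note that $\check\phi_{m,l}(\I(a))\subseteq A_l$ always, so $A_l\ll\check\phi_{m,l}(\I(a))\subseteq A_l$; testing the compact containment $A_l\ll\check\phi_{m,l}(\I(a))$ against the constant net with value $\check\phi_{m,l}(\I(a))$ forces $A_l\le\check\phi_{m,l}(\I(a))$, whence $A_l=\check\phi_{m,l}(\I(a))$. Applying $\check\phi_{l,k}$ and using $\check\phi_{l,k}(A_l)=A_k$ then propagates the equality to all $k\ge l$, since $\check\phi_{m,k}(\I(a))=\check\phi_{l,k}(\check\phi_{m,l}(\I(a)))=\check\phi_{l,k}(A_l)=A_k$. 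Recalling $\check\phi_{m,k}(\I(a))=\I(\phi_{m,k}(a))$, this is exactly the desired conclusion.

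The only genuinely delicate point is the correct bookkeeping in the invocation of Proposition \ref{lem: idealIL}: the useful move is to feed in the compact containment $A_m\ll A_m$ (available precisely because the spectrum of $A_m$ is compact) together with the limiting equality $\check\phi_{m,\infty}(A_m)=\check\phi_{m,\infty}(\I(a))$, so as to convert that equality-at-infinity into a finite-stage compact containment $A_l\ll\check\phi_{m,l}(\I(a))$; the sandwich $A_l\ll\check\phi_{m,l}(\I(a))\subseteq A_l$ then collapses to equality. Everything else is the routine functoriality of $\check\phi$ and the identity $\check\phi(\I(a))=\I(\phi(a))$.
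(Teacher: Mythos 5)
Your proposal is correct and follows essentially the same route as the paper's own proof: both derive $A_m\ll A_m$ from Corollary \ref{cor: compact}, invoke Proposition \ref{lem: idealIL} with $I=J=A_m$ and $K=\I(a)$, and conclude via the sandwich $A_l\ll\check\phi_{m,l}(\I(a))\subseteq A_l$ followed by pushing forward with $\check\phi_{l,k}$. Your explicit bookkeeping of the functoriality identities $\check\phi_{m,k}(A_m)=A_k$ and $\check\phi(\I(a))=\I(\phi(a))$ is a slightly cleaner presentation of steps the paper leaves implicit, but it is not a different argument.
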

\begin{proof}
Let $a\in A_m$ be as in the statement of the lemma. Since $A_m$ has compact spectrum, $A_m\ll A_m$ by Corollary \ref{cor: compact}. Therefore,
\begin{align*}
&A=\check\phi_{m,\infty}(A_m)\ll \check\phi_{m,\infty}(A_m)=A,\\
&A_n=\check\phi_{m,n}(A_m)\ll \check\phi_{m,n}(A_m)=A_n,
\end{align*}
for all $n\ge m$. In other words, $A$ and $A_n$, with $n\ge m$, have compact spectrum.
Using the definition of the compact containment relation and that $A\ll A=\sup_n\check\phi_{n,\infty}(A_n)$ we get
\begin{align*}
\check\phi_{n,\infty}(A_n)=A=\I(\phi_{m,\infty}(a)),
\end{align*}
for all $n$. This implies, by Proposition \ref{lem: idealIL} applied to $I=A_m$, $J=A_m$, and $K=\I(a)$, that there exists $l\ge m$ such that 
$$\check\phi_{m,k}(A_m)\subseteq\I(\phi_{m,k}(a)),$$
for all $k\ge l$.
Since, we also have 
$$\I(\phi_{m,k}(a))\subseteq \check\phi_{m,k}(A_m),\quad \check\phi_{m, k}(A_m)=A_k,$$
we get $\I(\phi_{m,k}(a))=A_k$ for all $k\ge l$.
\end{proof}

\subsection{Ideals, quotients, and unitizations of pullbacks}
Recall that if $A$, $B$, and $C$ are C*-algebras and $\phi\colon A\to C$ and $\psi\colon B\to C$ are *-homomorphisms then the pullback $A\oplus_C B$ is given by
\begin{align*}
A\oplus_C B=\{(a,b)\in A\oplus B: \phi(a)=\psi(b)\}.
\end{align*}
We denote by $\pi_1\colon A\oplus_C B\to A$ and $\pi_2\colon A\oplus_C B\to B$ the projections maps associated to this pullback. That is, the maps defined by $\pi_1(a,b)=a$ and $\pi_2(a,b)=b$ for $(a,b)\in A\oplus_C B$.

The following is Proposition 3.1. of \cite{Pedersen}.

\begin{proposition}\label{prop: Pedersen}
A commutative diagram of C*-algebras 
\begin{equation*}
\xymatrix{X \ar[d]_{\pi_1} \ar[r]^{\pi_2} & B\ar[d]^\psi \\ A\ar[r]_\phi & C}
\end{equation*}
is a pullback if and only if the following conditions hold:
\begin{itemize}
\item[(i)] $\mathrm{Ker}(\pi_1)\cap \mathrm{Ker}(\pi_2)=\{0\}$,
\item[(ii)] $\psi^{-1}(\phi(A))=\pi_2(X)$,
\item[(iii)] $\pi_1(\mathrm{Ker}(\pi_2))=\mathrm{Ker}(\phi)$.
\end{itemize}
\end{proposition}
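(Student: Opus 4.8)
The plan is to reduce the whole statement to checking that a single canonical $*$-homomorphism is an isomorphism. Form the genuine pullback $A\oplus_C B$ together with its projections, and, using commutativity of the square ($\phi\circ\pi_1=\psi\circ\pi_2$), observe that $x\mapsto(\pi_1(x),\pi_2(x))$ lands in $A\oplus_C B$ and hence defines a $*$-homomorphism $\theta\colon X\to A\oplus_C B$. By definition $X$ (with $\pi_1,\pi_2$) is a pullback exactly when $\theta$ is a $*$-isomorphism, so the proposition is equivalent to the assertion that $\theta$ is bijective if and only if (i)--(iii) hold. I would then split bijectivity into injectivity (corresponding to (i)) and surjectivity (corresponding to (ii) and (iii)).

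The injective half is immediate: $\ker\theta=\{x:\pi_1(x)=0=\pi_2(x)\}=\ker\pi_1\cap\ker\pi_2$, so $\theta$ is injective precisely when (i) holds. The substance lies in surjectivity. First I would record two inclusions that hold automatically from commutativity, namely $\pi_2(X)\subseteq\psi^{-1}(\phi(A))$, since $\psi\pi_2(x)=\phi\pi_1(x)\in\phi(A)$, and $\pi_1(\ker\pi_2)\subseteq\ker\phi$, since $\phi\pi_1(x)=\psi\pi_2(x)=0$ whenever $\pi_2(x)=0$. Thus (ii) and (iii) are exactly the reverse inclusions.

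Assuming (ii) and (iii), I would prove $\theta$ surjective by a two-step lift. Given $(a,b)\in A\oplus_C B$, that is $\phi(a)=\psi(b)$, the element $b$ lies in $\psi^{-1}(\phi(A))$ because $\psi(b)=\phi(a)\in\phi(A)$, so (ii) yields $x_0\in X$ with $\pi_2(x_0)=b$. Then $\phi(a-\pi_1(x_0))=\phi(a)-\psi\pi_2(x_0)=\phi(a)-\psi(b)=0$, so $a-\pi_1(x_0)\in\ker\phi$, and (iii) produces $y\in\ker\pi_2$ with $\pi_1(y)=a-\pi_1(x_0)$; the element $x:=x_0+y$ then satisfies $\theta(x)=(a,b)$. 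Conversely, if $\theta$ is surjective, lifting pairs $(a,b)$ with $b\in\psi^{-1}(\phi(A))$ gives (ii), and lifting pairs $(a,0)$ with $a\in\ker\phi$ gives (iii).

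Honestly, there is no deep obstacle here; the argument is a diagram chase and the $C^*$-structure plays no special role beyond ensuring $\theta$ is a $*$-homomorphism. The only point demanding care is the ordering of the lift in the surjectivity step: one must first match the $B$-coordinate using (ii) and then correct the $A$-coordinate \emph{within} $\ker\pi_2$ using (iii), verifying at each stage that the hypothesis being invoked genuinely applies. It is also worth noting explicitly that (i) is not used for surjectivity, so the three conditions correspond to three independent features of $\theta$: injectivity, the reach of $\pi_2$, and the reach of $\pi_1$ restricted to $\ker\pi_2$.
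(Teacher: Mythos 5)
Your proof is correct. There is, however, nothing in the paper to compare it against: the paper does not prove this statement, but quotes it verbatim as Proposition 3.1 of Pedersen's pullback--pushout paper \cite{Pedersen}. Your route --- forming the concrete pullback $A\oplus_C B$, reducing the universal property to bijectivity of the canonical mediating map $\theta\colon X\to A\oplus_C B$, $x\mapsto(\pi_1(x),\pi_2(x))$, and then matching injectivity with (i) and surjectivity with (ii) and (iii) --- is the standard argument, and in substance it is how the result is established in Pedersen's paper, so your proposal supplies exactly the proof the present paper omits. Two small points deserve more careful phrasing than you give them. First, the equivalence ``$X$ is a pullback if and only if $\theta$ is a $*$-isomorphism'' is not literally ``by definition'': one direction needs the usual uniqueness argument --- the universal property of $X$ yields a mediating map $\sigma\colon A\oplus_C B\to X$ with $\pi_i\circ\sigma=p_i$, and then both composites $\theta\circ\sigma$ and $\sigma\circ\theta$ commute with the coordinate projections, hence are identities by the uniqueness clauses of the two universal properties. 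Second, you implicitly use that a bijective $*$-homomorphism is a $*$-isomorphism (its set-theoretic inverse is again a $*$-homomorphism); this is automatic but should be stated. Neither point is a gap: the decomposition into the two automatic inclusions plus their converses, and the two-step lift in the surjectivity argument (first matching the $B$-coordinate via (ii), then correcting the $A$-coordinate inside $\ker\pi_2$ via (iii)), are exactly right.
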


\subsubsection{Ideals of Pullbacks}
The following theorem describes the lattice of ideals of a pullback of C*-algebras.


\begin{theorem}\label{th: ideals}
Let $A$, $B$, and $C$ be C*-algebras and let $\phi\colon A\to C$ and $\psi\colon B\to C$ be *-homomorphisms with $\phi$ is surjective. Let $\mathrm{L}_{A, B}$ be the subset of $\Lat(A)\times \Lat(B)$ consisting of pairs of ideals $(I, J)$ such that:
\begin{enumerate}
\item[(1)] $\phi(I)=\check\psi(J)$,
\item[(2)] $\I(I\cap \phi^{-1}(\psi(J)))=I$.
\end{enumerate}
If $\mathrm{L}_{A, B}$ is endowed with the order 
\[
(I_1, J_1)\le (I_2, J_2)\quad \text{ if }\quad I_1\le I_2,  J_1\le J_2, 
\]
then:
\begin{itemize}
\item[(i)] $\mathrm{L}_{A, B}$ is a continuous lattice and the map $\alpha\colon \Lat(A\oplus_CB)\to \mathrm{L}_{A, B}$ defined by
\begin{align*}
\alpha(M)=(\check\pi_1(M), \check\pi_2(M)),
\end{align*}
is an isomorphism of  lattices. Moreover, the inverse of $\alpha$ is given by the map $\beta\colon \mathrm{L}_{A, B}\to\Lat(A\oplus_CB)$ defined by
\begin{align*}
\beta(I, J)=I\oplus_{\phi(I)} J,
\end{align*}
where $I\oplus_{\phi(I)} J$ denotes the pullback taken with respect to the restriction maps $\phi|_I\colon I\to \phi(I)$ and $\psi|_J\colon J\to \phi(I)$ (this pullback may be identified with the subset of $A\oplus_C B$ given by $\{(a,b)\in A\oplus_CB\mid a\in I, b\in J\}$);
\item[(ii)] For every subset $T$ of $\mathrm{L}_{A, B}$ the supremum of $T$ in $\mathrm{L}_{A, B}$ is equal to the supremum of $T$ in $\Lat(A)\times\Lat(B)$;
\item[(iii)] $(I_1, J_1)\ll (I_2, J_2)$ in $\mathrm{L}_{A, B}$ if and only if $(I_1, J_1)\ll (I_2, J_2)$ in $\Lat(A)\times\Lat(B)$.
\end{itemize} 
\end{theorem}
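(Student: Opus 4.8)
The plan is to establish part (i) by exhibiting $\alpha$ and $\beta$ as mutually inverse, order-preserving maps, and then to read off (ii) and (iii) from the fact that $\alpha,\beta$ are isomorphisms of complete lattices together with the stated properties of the maps $\check\pi_1,\check\pi_2$. Throughout I would exploit that $\phi$ surjective forces $\pi_2$ surjective: by Proposition \ref{prop: Pedersen}(ii), $\pi_2(A\oplus_C B)=\psi^{-1}(\phi(A))=\psi^{-1}(C)=B$. Consequently $\check\pi_2(M)=\pi_2(M)$ is already an ideal of $B$ for every ideal $M$, while $\pi_1(A\oplus_C B)=\phi^{-1}(\psi(B))$ is in general only a sub-C*-algebra, so the generation $\check\pi_1(M)=\I(\pi_1(M))$ is genuinely needed. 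I would also use repeatedly that a surjection sends ideals to ideals and that $\phi(\I(S))=\I(\phi(S))$.

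First I would verify that $\beta$ and $\alpha$ land where claimed. That $\beta(I,J)=\{(a,b)\in A\oplus_C B: a\in I,\ b\in J\}$ is an ideal is a one-line check: for $(a,b)\in\beta(I,J)$ and $(x,y)$ in the pullback, $(xa,yb)$ again lies in the pullback since $\phi(xa)=\phi(x)\phi(a)=\psi(y)\psi(b)=\psi(yb)$, and $xa\in I$, $yb\in J$. For $\alpha$, put $I=\check\pi_1(M)$ and $J=\pi_2(M)$; condition (1) holds because $\phi(I)=\I(\phi(\pi_1(M)))=\I(\psi(\pi_2(M)))=\check\psi(J)$, using $\phi(a)=\psi(b)$ for every $(a,b)\in M$, and condition (2) holds because $\pi_1(M)\subseteq I\cap\phi^{-1}(\psi(J))\subseteq I$, so the ideals generated by these three sets all coincide.

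Next come the composition identities. For $\alpha\circ\beta=\mathrm{id}$ I would compute $\pi_1(\beta(I,J))=I\cap\phi^{-1}(\psi(J))$ and $\pi_2(\beta(I,J))=J$ (the latter because (1) gives $\psi(J)\subseteq\phi(I)$, so each $b\in J$ admits $a\in I$ with $\phi(a)=\psi(b)$); applying $\I(\cdot)$ and invoking condition (2) recovers $(I,J)$. The identity $\beta\circ\alpha=\mathrm{id}$ is the crux. The inclusion $M\subseteq\beta(\alpha(M))$ is immediate; for the reverse, given $(a,b)$ with $a\in\check\pi_1(M)$, $b\in\pi_2(M)$, $\phi(a)=\psi(b)$, I would choose $a_0$ with $(a_0,b)\in M$ and, via $(a,b)=(a_0,b)+(a-a_0,0)$, reduce to showing $(c,0)\in M$ for $c:=a-a_0\in\check\pi_1(M)\cap\mathrm{Ker}(\phi)$. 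The key fact is that $N:=\{x\in\mathrm{Ker}(\phi):(x,0)\in M\}$ is an ideal of $A$ equal to $\pi_1(M)\cap\mathrm{Ker}(\phi)$: with $(e_\lambda)$ an approximate unit of $\mathrm{Ker}(\phi)$ and $a'\in A$, the elements $(a'e_\lambda,0)$ lie in the pullback, so $(a'e_\lambda,0)(x,0)\in M$ converges to $(a'x,0)$, showing $N$ absorbs $A$. Then $\check\pi_1(M)\cap\mathrm{Ker}(\phi)\subseteq N$ follows by the same device: for $c$ in this intersection, $e_\lambda c$ is a limit of sums $e_\lambda a_1 p a_2$ with $p\in\pi_1(M)$, and since $\mathrm{Ker}(\phi)\subseteq\pi_1(A\oplus_C B)$ and $\pi_1(M)$ is an ideal of the latter, each $e_\lambda a_1 p$ lands in $\pi_1(M)\cap\mathrm{Ker}(\phi)=N$, hence $e_\lambda a_1 p a_2\in N$ and $c=\lim_\lambda e_\lambda c\in N$. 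I expect this approximate-unit bookkeeping to be the main obstacle, as it requires keeping careful track of which products stay inside the non-surjective image $\pi_1(A\oplus_C B)=\phi^{-1}(\psi(B))$.

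With $\alpha$ and $\beta$ mutually inverse and plainly order-preserving, $\mathrm{L}_{A,B}$ is order-isomorphic to the continuous lattice $\Lat(A\oplus_C B)$ and is therefore itself a continuous lattice, which gives (i). For (ii) I would use that $\check\pi_1,\check\pi_2$, being morphisms of continuous lattices, preserve arbitrary suprema: writing $T\subseteq\mathrm{L}_{A,B}$ as $\{\alpha(M_t)\}$, its supremum in $\mathrm{L}_{A,B}$ is $\alpha(\sup_t M_t)=(\check\pi_1(\sup_t M_t),\check\pi_2(\sup_t M_t))=(\sup_t\check\pi_1(M_t),\sup_t\check\pi_2(M_t))$, which is the coordinatewise supremum in $\Lat(A)\times\Lat(B)$. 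For (iii), the forward implication uses that $\beta$ is an order isomorphism of complete lattices (so preserves $\ll$) and that $\check\pi_1,\check\pi_2$ preserve $\ll$: from $(I_1,J_1)\ll(I_2,J_2)$ in $\mathrm{L}_{A,B}$ one gets $\beta(I_1,J_1)\ll\beta(I_2,J_2)$ and hence $I_1\ll I_2$ and $J_1\ll J_2$, i.e. $\ll$ coordinatewise in the product. The reverse implication follows from (ii) and directedness: if $I_1\ll I_2$ and $J_1\ll J_2$, then for any increasing net in $\mathrm{L}_{A,B}$ whose supremum dominates $(I_2,J_2)$, part (ii) lets me compute that supremum coordinatewise, choose indices past the two thresholds in each coordinate, and pass to a common upper bound in the net, placing $(I_1,J_1)$ below some member.
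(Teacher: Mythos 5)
Your proposal is correct, and its overall skeleton is the same as the paper's: check that $\alpha$ lands in $\mathrm{L}_{A,B}$, show that $\alpha$ and $\beta$ are mutually inverse order-preserving maps, and derive (ii) and (iii) from the resulting identification of $\mathrm{L}_{A,B}$ with $\Lat(A\oplus_CB)$. The differences lie in how the individual steps are discharged, and on the crux they cut in your favor. For $\beta\circ\alpha=\mathrm{id}$ the paper merely asserts that it is ``straightforward to check using Proposition \ref{prop: Pedersen}'' that the restricted diagram on $M$, $I=\check\pi_1(M)$, $J=\pi_2(M)$, $\phi(I)$ is a pullback; the only nontrivial condition there is Pedersen's condition (iii), namely $\{x\in A:(x,0)\in M\}=\check\pi_1(M)\cap\mathrm{Ker}(\phi)$, and your approximate-unit argument (multiplying elements of $M$ by pairs $(e_\lambda a,0)$ with $(e_\lambda)$ an approximate unit of $\mathrm{Ker}(\phi)$, and using that $M$ is closed) is precisely the verification the paper suppresses. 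One point in your write-up should be made explicit: you invoke the equality $\pi_1(M)\cap\mathrm{Ker}(\phi)=N$ after justifying only that $N$ is an ideal; the missing inclusion does follow ``by the same device'' --- for $(x,b)\in M$ with $\phi(x)=0$ one has $(e_\lambda,0)(x,b)=(e_\lambda x,0)\in M$ converging to $(x,0)$ --- or can be bypassed altogether, since $(e_\lambda a_1,0)(p,q)=(e_\lambda a_1p,0)\in M$ already shows $e_\lambda a_1p\in N$ directly. For (ii) and (iii) you rely on the fact, recorded in the paper's preliminaries, that $\check\pi_1$ and $\check\pi_2$ preserve arbitrary suprema and the relation $\ll$; the paper instead verifies conditions (1) and (2) for the coordinatewise supremum by hand, and proves the forward half of (iii) through the $(a-\epsilon)_+$ characterization of Proposition \ref{prop: far below}. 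Both routes are valid; yours is shorter given the cited facts, the paper's is self-contained at that point. The reverse half of (iii) is the same argument in both.
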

\begin{proof}
(i) Let $M$ be an ideal of $A\oplus_CB$. Then since $\phi$ is surjective $\pi_2$ is surjective by (ii) of Proposition \ref{prop: Pedersen}. Hence, $\pi_2(M)$ is an ideal of $B$ and $\check\pi_2(M)=\pi_2(M)$. Let $I=\check\pi_1(M)$ and $J=\pi_2(M)$. Then it is clear that $$\phi(I)=\I(\psi(J))=\check\psi(J).$$ 
Also, since $\pi_1(M)\subseteq I$ and $\pi_1(M)\subseteq\phi^{-1}(\psi(J))$ we have $I=\I(I\cap \phi^{-1}(\psi(J)))$.
It follows that $(I, J)\in \mathrm{L}_{A, B}$ and that the map $\alpha$ is well defined. It is straightforward to check using Proposition \ref{prop: Pedersen} that the diagram
\begin{equation*}
\xymatrix{M \ar[d]_{\pi_1|_M} \ar[r]^{\pi_2|_M} & J\ar[d]^{\psi|_J} \\ I\ar[r]_{\phi|_I} & \phi(I)}
\end{equation*}
is a pullback. Therefore
$M=I\oplus_{\phi(I)} J$ or what is the same $(\beta\circ \alpha)(M)=M$. Since $M$ is an arbitrary ideal of $A\oplus_CB$ we conclude that $\beta\circ \alpha=\mathrm{id}_{\Lat(A\oplus_CB)}$.

Now let us show that $\alpha\circ\beta=\mathrm{id}_{\mathrm{L}_{A, B}}$.
Let $(I, J)\in \mathrm{L}_{A, B}$. Then $\pi_2(I\oplus_KJ)=J$ by (ii) of Proposition \ref{prop: Pedersen}.
Let $a$ be an element of $I\cap \phi^{-1}(\psi(J))$. Choose $b\in J$ such that $\phi(a)=\psi(b)$. Then $(a,b)\in I\oplus_{\phi(I)}J$ and $a\in \pi_1(I\oplus_{\phi(I)}J)$. Since $a$ is arbitrary this implies that 
$$I\cap \phi^{-1}(\psi(J))\subseteq \pi_1(I\oplus_{\phi(I)}J)\subseteq I.$$ 
Therefore $\check\pi_1(I\oplus_{\phi(I)}J)=I$ by (2) of the definition of $\mathrm{L}_{A, B}$. We now have
\begin{align*}
(\alpha\circ\beta)(I,J)=\alpha(I\oplus_{\phi(I)}J)=(\check\pi_1(I\oplus_{\phi(I)}J), \check\pi_2(I\oplus_{\phi(I)}J))=(I, J).
\end{align*}
Therefore, $\alpha\circ\beta=\id_{\mathrm{L}_{A, B}}$.

The maps $\alpha$ and $\beta$ are clearly order-preserving. Hence, they are isomorphisms of lattices that are inverse of each other. Since $\Lat(A\oplus_CB)$ is a continuous lattice it follows that $\mathrm{L}_{A, B}$ is a continuous lattice. 

(ii) Let $T$ be a subset of $\mathrm{L}_{A, B}$ and let 
\begin{align*}
\tilde{I}=\I(\{I\mid (I, J)\in T\}),\quad
\tilde{J}=\I(\{J\mid (I, J)\in T\}).
\end{align*}
Then
\[
(\tilde I, \tilde J)=\I(\{(I, J) \mid (I, J)\in T\})=\sup \{(I, J) \mid (I, J)\in T\},
\]  
where the supremum is taken in $\Lat(A)\times\Lat(B)$. Let us see that $(\tilde I, \tilde J)\in \mathrm{L}_{A, B}$. We have
\begin{align*}
\phi(\tilde I)&=\I(\{\phi(I) \mid (I,J)\in T\})\\
&=\I(\{\check\psi(J) \mid (I,J )\in T\})\\
&=\I(\{\I(\psi(J)) \mid (I,J )\in T\})\\
&=\I(\{\psi(J) \mid (I, J)\in T\})\\
&=\check\psi(\tilde J).
\end{align*}
Hence, $(\tilde I, \tilde J)$ satisfies condition (1) of the definition of $\mathrm{L}_{A,B}$.

Let $(I, J)\in T$ and let $G=I\cap \phi^{-1}(\psi(J))$. Then $\I(G)=I$, $G\subseteq \tilde I$, and 
\[
G\subseteq \phi^{-1}(\psi(J))\subseteq \phi^{-1}(\psi(\tilde J)). 
\]
Hence, $G\subseteq \tilde I\cap \phi^{-1}(\psi(\tilde J))$. Using that $\I(G)=I$ we get
\[
I\subseteq \I(\tilde I \cap \phi^{-1}(\psi(\tilde J))).
\]
Therefore,
\[
\tilde I=\I(\{I \mid (I, J)\in T\})\subseteq \I(\tilde I \cap \phi^{-1}(\psi(\tilde J))).
\]
Since $\I(\tilde I \cap \phi^{-1}(\psi(\tilde J)))\subseteq \tilde I$ we conclude that $\tilde I=\I(\tilde I\cap \phi^{-1}(\psi(\tilde J)))$. This shows that  $(\tilde I, \tilde J)$ satisfies condition (2) of the definition of $\mathrm{L}_{A,B}$.
Therefore, $(\tilde I, \tilde J)\in \mathrm{L}_{A,B}$. Since the order in $\mathrm{L}_{A, B}$ is induced by the order in $\Lat(A)\times \Lat(B)$, $(\tilde I, \tilde J)$ is the supremum of the set $T$ in $\mathrm{L}_{A, B}$.

(iii)  Suppose that $(I_1, J_1)\ll (I_2, J_2)$ in $L_{A,B}$. Then 
$$I_1\oplus_{\phi(I_1)}J_1=\beta(I_1, J_1)\ll \beta (I_2, J_2)=I_2\oplus_{\phi(I_2)}J_2.$$
By Proposition \ref{prop: far below} there are $(a,b)\in I_2\oplus_{\phi(I_2)}J_2$ and $\epsilon>0$ such that
\[
I_1\oplus_{\phi(I_1)} J_1\subseteq \I((a,b)-\epsilon)_+)=\I((a-\epsilon)_+, (b-\epsilon)_+).
\]
This implies that
\[
(I_1, J_1)=\alpha(I_1\oplus_{\phi(I_1)} J_1)\subseteq (\I((a-\epsilon)_+), \I((b-\epsilon)_+)).
\]
Therefore, $I_1\ll I_2$ and $J_1\ll J_2$ by Proposition \ref{prop: far below}. In other words, $(I_1, J_1)\ll (I_2, J_2)$ in $\Lat(A)\times \Lat(B)$.

Let $(I_1, J_1), (I_2, J_2)\in \Lat(A)\times \Lat(B)$ be such that $(I_1, J_1)\ll (I_2, J_2)$ in $\Lat(A)\times \Lat(B)$. Let $((I_\lambda, J_\lambda))_{\lambda\in \Lambda}\subseteq \mathrm{L}_{A, B}$ be an increasing net such that $(I_2, J_2)= \sup_{\lambda\in \Lambda} (I_\lambda, J_\lambda)$, where the supremum is taken in $\mathrm{L}_{A, B}$. Then by the second part of the theorem the supremum of $((I_\lambda, J_\lambda))_{\lambda\in \Lambda}$ in $\Lat(A)\times \Lat(B)$ is equal to $(I_2, J_2)$. Since $(I_1, J_1)\ll (I_2, J_2)$ in $\Lat(A)\times \Lat(B)$ there exists $\lambda\in \Lambda$ such that $(I_1, J_1)\le (I_\lambda, J_\lambda)$
in $\Lat(A)\times\Lat(B)$. This implies that $(I_1, J_1)\le (I_\lambda, J_\lambda)$ in $\mathrm{L}_{A, B}$. Since the increasing net $((I_\lambda, J_\lambda))_{\lambda\in \Lambda}$ is arbitrary, we conclude by the definition of the compact containment relation that $(I_1, J_1)\ll (I_2, J_2)$ in $\mathrm{L}_{A, B}$. 


\end{proof}


\begin{corollary}\label{cor: compact ideals}
Let $A$, $B$, and $C$ be C*-algebras and let $\phi\colon A\to C$ and $\psi\colon B\to C$ be *-homomorphisms with $\phi$ is surjective. If $M$ be an ideal of $A\oplus_B C$ then there are ideals $I\subseteq A$, $J\subseteq B$, and $K\subseteq C$ such that 
$$M=I\oplus_K J, \quad \phi(I)=K, \quad\I(\psi(J))=K.$$ 
Moreover, if $M$ has compact spectrum then the ideals $I$, $J$, and $K$ may be taken with compact spectrum.
\end{corollary}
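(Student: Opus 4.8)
The plan is to obtain the decomposition directly from the lattice isomorphism of Theorem~\ref{th: ideals}, and then to transport the compact containment relation through that isomorphism in order to handle the ``moreover'' clause.

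First I would put $I=\check\pi_1(M)$, $J=\check\pi_2(M)$, and $K=\phi(I)$, so that $\alpha(M)=(I,J)$. By part~(i) of Theorem~\ref{th: ideals} the pair $(I,J)$ belongs to $\mathrm{L}_{A,B}$ and $M=\beta(I,J)=I\oplus_{\phi(I)}J=I\oplus_K J$; in particular $\phi(I)=K$. Since $\phi$ is surjective, $K=\phi(I)$ is a genuine closed ideal of $C$ and $\check\phi(I)=\I(\phi(I))=K$. Condition~(1) in the definition of $\mathrm{L}_{A,B}$ reads $\phi(I)=\check\psi(J)=\I(\psi(J))$, which yields $\I(\psi(J))=K$. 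This proves the first assertion.

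For the ``moreover'' clause, assume $M$ has compact spectrum, equivalently $M\ll M$ in $\Lat(A\oplus_CB)$ by Corollary~\ref{cor: compact}. The maps $\alpha$ and $\beta$ are mutually inverse order isomorphisms of complete lattices, hence preserve arbitrary suprema and therefore the compact containment relation; thus $(I,J)\ll(I,J)$ in $\mathrm{L}_{A,B}$. By part~(iii) of Theorem~\ref{th: ideals} this is equivalent to $(I,J)\ll(I,J)$ in $\Lat(A)\times\Lat(B)$. Since the compact containment relation in a product of continuous lattices is computed coordinatewise, this says exactly that $I\ll I$ and $J\ll J$, so $I$ and $J$ have compact spectrum by Corollary~\ref{cor: compact}. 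Finally, as recalled before the statement, $\check\phi\colon\Lat(A)\to\Lat(C)$ preserves the compact containment relation, so $I\ll I$ forces $\check\phi(I)\ll\check\phi(I)$; since $K=\check\phi(I)$, this gives $K\ll K$, i.e.\ $K$ has compact spectrum, again by Corollary~\ref{cor: compact}.

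The whole argument is a bookkeeping translation through $\alpha$, so I do not anticipate a serious difficulty; the only points demanding care are the various preservations of $\ll$. For $I$ and $J$ this rests on part~(iii) of Theorem~\ref{th: ideals} together with the coordinatewise description of $\ll$ in a product lattice, while for $K$ it relies on the functoriality of $\check\phi$ recorded earlier. Should one wish to bypass the abstract preservation of $\ll$ by $\alpha$ altogether, an alternative is to use Corollary~\ref{cor: compact}(ii) to write $M=\I(((a,b)-\epsilon)_+)$ for some $(a,b)\in M_+$ and $\epsilon>0$, and then push this representation forward under $\pi_1$, $\pi_2$, and $\phi$ to exhibit $I$, $J$, and $K$ each in the form $\I((\,\cdot\,-\epsilon)_+)$ directly, whence compactness of their spectra follows from Corollary~\ref{cor: compact}.
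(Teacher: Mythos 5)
Your proposal is correct and is essentially the paper's intended argument: the paper states this corollary without a separate proof precisely because it follows from Theorem \ref{th: ideals} in the way you describe, namely part (i) gives $M=I\oplus_K J$ with $\phi(I)=\check\psi(J)=\I(\psi(J))=K$, and the ``moreover'' clause is obtained by transporting $M\ll M$ through the lattice isomorphism $\alpha$, invoking part (iii), the coordinatewise description of $\ll$ in $\Lat(A)\times\Lat(B)$, and the fact (recorded in the paper) that $\check\phi$ preserves compact containment, together with Corollary \ref{cor: compact}. Your alternative route via Corollary \ref{cor: compact}(ii), writing $M=\I\bigl(((a,b)-\epsilon)_+\bigr)$ and pushing forward under $\pi_1$, $\pi_2$, and $\phi$, is also valid and matches the spirit of Proposition \ref{prop: far below}.
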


\subsubsection{Quotients of pullbacks}

Let $I\subseteq A$, $J\subseteq B$, and $K\subseteq C$ be ideals such that $\phi(I)=K$ and $\psi(J)\subseteq K$. Then the maps $\phi$ and $\psi$ induce *-homomorphisms $\overline\phi\colon A/I\to C/K$ and $\overline\psi\colon B/J\to C/K$ between the quotient C*-algebras. Therefore, we can form the pullback $(A/I)\oplus_{C/K}(B/J)$. In the following proposition we show that this pullback is natural isomorphic to the quotient of $A\oplus_CB$ by the ideal $I\oplus_KJ$.

\begin{proposition}\label{prop: quotients}
Let $A$, $B$, and $C$ be C*-algebras and let $\phi\colon A\to C$ and $\psi\colon B\to C$ be *-homomorphisms with $\phi$ is surjective. If $I\subseteq A$, $J\subseteq B$, and $K\subseteq C$ are ideals such that $\phi(I)=K$ and $\psi(J)\subseteq K$ then the map
\[
\gamma\colon (A\oplus_CB)/(I\oplus_KJ)\to (A/I)\oplus_{C/K}(B/J),
\] 
given by
\[
\gamma((a, b)+I\oplus_KJ)=(a+I, b+J),
\]
is a *-isomorphism.
\end{proposition}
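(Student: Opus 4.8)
The plan is to realise $\gamma$ as the map induced on a quotient by a single surjective *-homomorphism out of the pullback $A\oplus_C B$, and then to invoke the first isomorphism theorem. First I would note that, since $\phi(I)=K$ and $\psi(J)\subseteq K$, the induced maps $\overline\phi\colon A/I\to C/K$ and $\overline\psi\colon B/J\to C/K$ are well defined, so the target pullback $(A/I)\oplus_{C/K}(B/J)$ makes sense. I would then consider
$$\gamma_0\colon A\oplus_C B\to (A/I)\oplus_{C/K}(B/J),\qquad \gamma_0(a,b)=(a+I,\,b+J),$$
and check that its values lie in the target pullback: for $(a,b)\in A\oplus_C B$ we have $\phi(a)=\psi(b)$, hence $\overline\phi(a+I)=\phi(a)+K=\psi(b)+K=\overline\psi(b+J)$. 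Being assembled coordinatewise from the quotient maps $A\to A/I$ and $B\to B/J$, the map $\gamma_0$ is a *-homomorphism.

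Next I would identify the kernel. An element $(a,b)\in A\oplus_C B$ lies in $\mathrm{Ker}(\gamma_0)$ exactly when $a\in I$ and $b\in J$; since membership in $A\oplus_C B$ already forces $\phi(a)=\psi(b)$, this says precisely that $(a,b)\in I\oplus_K J$. Thus $\mathrm{Ker}(\gamma_0)=I\oplus_K J$, and the first isomorphism theorem delivers an injective *-homomorphism $\gamma$ from $(A\oplus_C B)/(I\oplus_K J)$ into $(A/I)\oplus_{C/K}(B/J)$ with the stated formula. It then remains only to establish surjectivity. Given a point $(a+I,\,b+J)$ of the target pullback, its defining relation reads $\phi(a)-\psi(b)\in K$. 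Using the hypothesis $\phi(I)=K$, I would choose $i\in I$ with $\phi(i)=\phi(a)-\psi(b)$ and set $a'=a-i$; then $a'+I=a+I$ and $\phi(a')=\psi(b)$, so $(a',b)\in A\oplus_C B$ and $\gamma_0(a',b)=(a+I,\,b+J)$. Hence $\gamma_0$, and therefore $\gamma$, is surjective.

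Finally, a bijective *-homomorphism between C*-algebras is automatically isometric, so $\gamma$ is a *-isomorphism. The only step demanding genuine care is the surjectivity argument, and its success hinges on having the \emph{equality} $\phi(I)=K$ rather than a mere inclusion: it is precisely this equality that allows me to correct the first coordinate by an element of $I$, thereby meeting the pullback compatibility condition while remaining inside the prescribed cosets $a+I$ and $b+J$. As an alternative to the isomorphism theorem one could instead verify conditions (i)--(iii) of Proposition \ref{prop: Pedersen} directly for the induced commutative square, but the kernel computation above is more transparent and makes the role of the hypothesis $\phi(I)=K$ explicit.
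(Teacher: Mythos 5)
Your proof is correct and follows essentially the same route as the paper: both define the coordinatewise quotient map on $A\oplus_C B$, identify its kernel as $I\oplus_K J$, prove surjectivity, and invoke the first isomorphism theorem. The only (harmless) difference is that your surjectivity argument corrects $a$ in one step using $\phi(I)=K$ directly, whereas the paper first uses surjectivity of $\phi$ to produce $a'$ with $\phi(a')=\psi(b)$ and then applies $\phi(I)=K$ --- your version is in fact slightly more economical.
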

\begin{proof}
Let $\rho\colon A\oplus_CB\to (A/I)\oplus_{C/K}(B/J)$ be the map defined by $\rho(a,b)=(a+I, b+J)$ for every $(a,b)\in A\oplus_CB$. Note that $\rho$ is well defined and that the kernel of $\rho$ is $I\oplus_K J$. 

Let us show that $\rho$ is surjective.
Let $(a+I,b+J)\in (A/I)\oplus_{C/K}(B/J)$. Then $\overline\phi(a+I)=\overline\psi(b+J)$ and so $\phi(a)-\psi(b)\in K$. Since $\phi$ is surjective there exists $a'\in A$ such that $\phi(a')=\psi(b)$. It follows that $\phi(a-a')\in K$. Since by assumption $\phi(I)=K$ there exists $a''\in I$ such that $\phi(a'')=\phi(a-a')$. We now have $\phi(a-a'')=\phi(a')=\phi(b)$. Hence, $(a-a'',b)\in A\oplus_CB$ and $\rho(a-a'',b)=(a+I, b+J)$. In other words, $\rho$ is surjective.

Since $\rho$ is surjective and the kernel of $\rho$ is $I\oplus_KJ$, $\rho$ induces an isomorphism $\overline\rho\colon (A\oplus_CB)/(I\oplus_KJ)\to (A/I)\oplus_{C/K}(B/J)$. It is trivial to see that $\overline\rho=\gamma$.
\end{proof}

\subsubsection{Unitization of Pullbacks}
Let $A$ be a C*-algebra. Let us denote by $\widetilde{A}$ the C*-algebra obtained by adjoining a unit to $A$ (note that if $A$ is unital $\widetilde{A}\cong A\oplus \C$). If $\phi\colon A\to B$ is a *-homomorphism then it extends to a unique *-homomorphism $\widetilde \phi\colon \widetilde A\to \widetilde B$ such that $\widetilde \phi(1_{\widetilde A})=1_{\widetilde B}$. If $B$ is unital then we denote by $\phi^\dagger\colon \widetilde A\to B$ the *-homomorphism that agrees with $\phi$ on $A$ and that satisfies $\phi^\dagger(1_{\widetilde A})=1_{B}$.

The following proposition can be easily verified using Proposition \ref{prop: Pedersen}.

\begin{proposition}\label{prop: unitization}
Let $A$, $B$ and $C$ be C*-algebras and let $\phi\colon A\to C$ and $\psi\colon B\to C$ be *-homomorphisms with $\phi$ is surjective. The following statements hold:
\begin{enumerate}
\item $(A\oplus_C B)\widetilde{\,\,}\cong \widetilde A\oplus_{\widetilde C} \widetilde B$. Specifically, the diagram
\begin{equation*}
\xymatrix{(A\oplus_C B)\widetilde{\,\,} \ar[d]_{\widetilde \pi_1} \ar[r]^{\,\,\quad\widetilde \pi_2} & \widetilde B\ar[d]^{\widetilde \psi} \\ \widetilde A\ar[r]_{\widetilde \phi} & \widetilde C}
\end{equation*}
is a pullback;
\item If $A$ is unital then $(A\oplus_C B)\widetilde{\,\,}\cong A\oplus_{C} \widetilde B$. Specifically, the diagram
\begin{equation*}
\xymatrix{(A\oplus_C B)\widetilde{\,\,} \ar[d]_{\pi_1^\dagger} \ar[r]^{\,\,\quad\widetilde \pi_2} & \widetilde B\ar[d]^{\psi^\dagger} \\ A\ar[r]_{\phi} & C}
\end{equation*}
is a pullback.
\end{enumerate}
\end{proposition}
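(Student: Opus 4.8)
The plan is to verify each square directly against the three pullback conditions of Proposition \ref{prop: Pedersen}, transferring them from the given pullback $A\oplus_C B$. First I would record the one structural fact that drives both parts: for a *-homomorphism $\theta$ with unital extension $\widetilde\theta$, an element $x+\lambda 1$ of the domain's unitization lies in $\mathrm{Ker}(\widetilde\theta)$ exactly when $\lambda=0$ and $\theta(x)=0$, because the adjoined unit is never in the image of the non-unitized algebra. Applied to $\pi_1$, $\pi_2$ and $\phi$ this gives $\mathrm{Ker}(\widetilde\pi_i)=\mathrm{Ker}(\pi_i)$ (viewed inside $A\oplus_C B$) and $\mathrm{Ker}(\widetilde\phi)=\mathrm{Ker}(\phi)$; in particular no scalar part ever survives in these kernels.

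For part (1), commutativity of the square is automatic: $\widetilde\phi\circ\widetilde\pi_1$ and $\widetilde\psi\circ\widetilde\pi_2$ are both unital extensions of $\phi\circ\pi_1=\psi\circ\pi_2$, hence equal by uniqueness. Condition (i) is then immediate, since $\mathrm{Ker}(\widetilde\pi_1)\cap\mathrm{Ker}(\widetilde\pi_2)=\mathrm{Ker}(\pi_1)\cap\mathrm{Ker}(\pi_2)=\{0\}$. For (ii) I would use surjectivity of $\phi$ twice: it forces $\pi_2$ to be surjective (condition (ii) for $A\oplus_C B$ reads $\psi^{-1}(\phi(A))=\psi^{-1}(C)=B$) and it makes $\widetilde\phi$ surjective, so $\widetilde\psi^{-1}(\widetilde\phi(\widetilde A))=\widetilde\psi^{-1}(\widetilde C)=\widetilde B=\widetilde\pi_2((A\oplus_C B)\widetilde{\,\,})$. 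For (iii), since $\mathrm{Ker}(\widetilde\pi_2)=\mathrm{Ker}(\pi_2)$ lives in $A\oplus_C B$, on which $\widetilde\pi_1$ restricts to $\pi_1$, we get $\widetilde\pi_1(\mathrm{Ker}(\widetilde\pi_2))=\pi_1(\mathrm{Ker}(\pi_2))=\mathrm{Ker}(\phi)=\mathrm{Ker}(\widetilde\phi)$. Proposition \ref{prop: Pedersen} then identifies $(A\oplus_C B)\widetilde{\,\,}$ with $\widetilde A\oplus_{\widetilde C}\widetilde B$.

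For part (2) I would first observe that, since $A$ is unital and $\phi$ is surjective, $C=\phi(A)$ is unital with unit $\phi(1_A)$; this is exactly what makes $\psi^\dagger\colon\widetilde B\to C$ and $\pi_1^\dagger\colon(A\oplus_C B)\widetilde{\,\,}\to A$ well defined. Commutativity is checked on $A\oplus_C B$ (where it is $\phi\circ\pi_1=\psi\circ\pi_2$) and on the adjoined unit (where both composites land on $1_C$). The three conditions then transfer as before: (i) again uses that $\mathrm{Ker}(\widetilde\pi_2)=\mathrm{Ker}(\pi_2)\subseteq A\oplus_C B$, where $\pi_1^\dagger=\pi_1$, collapsing the intersection to $\mathrm{Ker}(\pi_1)\cap\mathrm{Ker}(\pi_2)=\{0\}$; (ii) reads $(\psi^\dagger)^{-1}(\phi(A))=(\psi^\dagger)^{-1}(C)=\widetilde B=\widetilde\pi_2((A\oplus_C B)\widetilde{\,\,})$, again using surjectivity of $\pi_2$; and (iii) is $\pi_1^\dagger(\mathrm{Ker}(\widetilde\pi_2))=\pi_1(\mathrm{Ker}(\pi_2))=\mathrm{Ker}(\phi)$.

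I do not anticipate a real obstacle, as the whole statement is bookkeeping on top of Proposition \ref{prop: Pedersen}. The only points requiring care are choosing the auxiliary maps correctly---in particular noticing that part (2) needs $C$ to be unital, which is guaranteed precisely by unitality of $A$ together with surjectivity of $\phi$---and keeping track of the fact that unitizing a *-homomorphism does not enlarge its kernel by any scalar, so that each condition descends verbatim to the original pullback.
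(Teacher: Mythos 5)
Your proposal is correct and follows exactly the route the paper intends: the paper gives no written proof beyond the remark that the proposition "can be easily verified using Proposition \ref{prop: Pedersen}", and your argument is precisely that verification, with the two key observations (unitizing a *-homomorphism does not enlarge its kernel, and surjectivity of $\phi$ forces surjectivity of $\pi_2$, $\widetilde\pi_2$, and $\widetilde\phi$, as well as unitality of $C$ in part (2)) supplied correctly.
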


\subsection{Recursive subhomogeneous algebras} 
In this subsection we introduce a nonunital version of recursive subhomogeneous algebras (cf., \cite{PhillipsRSH}).

\begin{definition}\label{def: RSH0}
Let $A$ be a C*-algebra. We say that $A$ is a recursive subhomogeneous algebra vanishing at infinity, shortly RSH$_0$-algebra, if $A$ is isomorphic to the $k$-th term, for some $k$, of a sequence of iterated pullbacks of the form:
\begin{align*}
P_i=
\begin{cases}
\mathrm{C}_0(X_0, \M_{n_0}), \quad \text{if}\quad i=0,\\
\mathrm{C}_0(X_{i}, \M_{n_{i}})\oplus_{\mathrm{C}_0\left(X_{i}^{(0)}, \, \M_{n_{i}}\right)} P_{i-1}, \quad\text{if}\quad 1\le i\le k,
\end{cases}
\end{align*}
with $X_i$ locally compact and Hausdorff, $X_i^{(0)}$ a closed subspace of $X_i$, $n_i$ a positive integer, and where the maps $\mathrm{C}_0(X_i, \M_{n_i})\to \mathrm{C}_0(X_i^{(0)}, \M_{n_i})$ are always restriction maps. We call $(P_{i}, X_{i+1}, X_{i+1}^{(0)}, n_{i+1} ,\rho_i)_{i=0}^{k-1}$ a decomposition of $A$. Here $\rho_i\colon P_i\to \mathrm{C}_0(X_{i+1}^{(0)},\M_{n_{i+1}})$ denotes the *-homomorphism appearing in the $i$-th pullback. Associated to this decomposition are:
\begin{enumerate}
\item its length $k$;
\item its base spaces $X_0, X_1, \cdots, X_k$ and total space $X=\coprod_{i=0}^kX_i$;
\item its topological dimension $\dim X$ (i.e., the covering dimension of $X$), and topological dimension function $d\colon X\to \N\cup \{0\}$, defined by $d(x)=\dim (X_k)$ when $x\in X_k$;
\item its matrix sizes $n_0, n_1, \cdots, n_k$, and matrix size function $m\colon X\to \N\cup \{0\}$, defined by $m(x)=n_k$ when $x\in X_k$;
\item its standard representation $\sigma\colon A\to \bigoplus_{i=0}^k\mathrm{C_0}(X_i, \M_{n_i})$, defined by using the natural inclusion of a pullback on the direct sum of its projection C*-algebras;
\item the evaluation maps $ev_x\colon A\to \M_{n_i}$ for $x\in X_i$, defined as the composition of $\sigma$ with the evaluation map at $x$.
\end{enumerate}

\end{definition}

Note that by definition (see \cite[Definition 1.2]{PhillipsRSH}) a C*-algebra $A$ is a recursive subhomogeneous algebra, shortly a RSH-algebra, if it admits a RSH$_0$-decomposition in which all the algebras and all the *-homomorphisms are unital.



\begin{proposition}\label{prop: permanence}
Let $A$ be a RSH$_0$-algebra. The following statements hold:
\begin{enumerate}
\item If $I$ is an ideal of $A$ then $I$ and $A/I$ are RSH$_0$-algebras;

\item $A$ has compact spectrum if and only if $A$ has a decomposition
$$(P_{i}, X_{i+1}, X_{i+1}^{(0)}, n_{i+1}, \rho_i)_{i=0}^{k-1},\quad \rho_i\colon P_i\to \mathrm{C}_0(X_{i+1}^{(0)}, \M_{n_{i+1}}), $$
such that
\begin{enumerate}
\item $X_i$ and $X_i^{(0)}$ are compact for all $i$;
\item $\rho_i\colon P_i \to \mathrm{C}(X_{i+1}^{(0)},\M_{n_{i+1}})$ satisfies
$$\I(\rho_i(P_i))=\mathrm{C}_0\left(X_{i+1}^{(0)},\M_{n_{i+1}}\right),$$
for all $i$;

\item there is $l$, with $0\le l\le k$, such that $P_l$ is unital, $P_{l+1}$ is non-unital (if $l<k$), and $(P_{i}, X_{i+1}, X_{i+1}^{(0)}, n_{i+1}, \rho_i)_{i=0}^{l-1}$ is a RSH-decomposition of $P_l$ (i.e., all the maps and all the algebras in the decomposition are unital);
\end{enumerate}

\item If $A$ has compact spectrum and 
$$(P_{i}, X_{i+1}, X_{i+1}^{(0)}, n_{i+1},\rho_i)_{i=0}^{k-1},\quad\rho_i\colon P_i\to \mathrm{C}_0(X_{i+1}^{(0)}, \M_{n_{i+1}}),$$ 
is a decomposition of $A$ satisfying (a), (b), and (c) of (ii) then $\widetilde A$ is RSH-algebra with decomposition $(Q_i, Y_{i+1}, Y_{i+1}^{(0)}, m_{i+1}, \mu_i)_{i=0}^{k}$ given by
\begin{align*}
(Q_i, Y_{i+1}, Y_{i+1}^{(0)}, m_{i+1}, \mu_i)=
\begin{cases}
(P_i, X_{i+1}, X_{i+1}^{(0)}, n_{i+1}, \rho_i) &\text{if }\, 0\le i\le l-1,\\
(P_l, \{y_l\}, \varnothing, 1, \mu_l\colon P_l\to 0) &\text{if }\, i=l,\\  
(\widetilde P_{i-1}, X_i, X_i^{(0)}, n_{i+1}, \rho_{i-1}^\dagger) &\text{if }\, l+1\le i\le k. 
\end{cases}
\end{align*}
(In particular, $Q_{l+1}\cong Q_l\oplus \C$.)
\item Let $X$ be a locally compact Hausdorff space then $\mathrm{C}_0(X)\otimes A$ is a RSH$_0$-algebra. Moreover, if $X$ is compact and $A$ has compact spectrum then $\mathrm{C}(X)\otimes A$ has compact spectrum;

\item If $A$ has compact spectrum and $p\in A$ is a projection then $pAp$ is a RSH-algebra.
\end{enumerate}

In addition, if a RSH$_0$-decomposition of $A$ is given then the decompositions of $I$ and $A/I$ in (i),  $A$ in (ii), and $pAp$ in (v) can be taken with topological dimension no greater than the topological dimension of the given decomposition of $A$. 
\end{proposition}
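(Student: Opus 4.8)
The plan is to establish all five assertions by induction on the length $k$ of a given RSH$_0$-decomposition $(P_i, X_{i+1}, X_{i+1}^{(0)}, n_{i+1}, \rho_i)_{i=0}^{k-1}$ of $A$, with base case $A = \mathrm{C}_0(X_0, \M_{n_0})$, whose ideals and quotients are $\mathrm{C}_0(U, \M_{n_0})$ and $\mathrm{C}_0(F, \M_{n_0})$ for $U\subseteq X_0$ open and $F\subseteq X_0$ closed, and which has compact spectrum exactly when $X_0$ is compact. The inductive step always exploits that the top algebra is a pullback $P_k = \mathrm{C}_0(X_k, \M_{n_k})\oplus_{\mathrm{C}_0(X_k^{(0)}, \M_{n_k})} P_{k-1}$ along the surjective restriction map $\phi$ and $\psi = \rho_{k-1}$, so the pullback machinery of Corollary~\ref{cor: compact ideals}, Proposition~\ref{prop: quotients} and Proposition~\ref{prop: unitization} applies directly.

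For (i), given an ideal $M\subseteq P_k$, Corollary~\ref{cor: compact ideals} produces ideals $I\subseteq \mathrm{C}_0(X_k,\M_{n_k})$, $J\subseteq P_{k-1}$, $K\subseteq \mathrm{C}_0(X_k^{(0)},\M_{n_k})$ with $M = I\oplus_K J$, $\phi(I)=K$, and $\I(\psi(J))=K$. Here $I=\mathrm{C}_0(U_k,\M_{n_k})$ for an open $U_k\subseteq X_k$, $K=\mathrm{C}_0(U_k\cap X_k^{(0)},\M_{n_k})$ is its restriction, and $J$ is an RSH$_0$-algebra by the inductive hypothesis; recognizing $M$ as the pullback of $\mathrm{C}_0(U_k,\M_{n_k})\to K\leftarrow J$ appends one restriction-type pullback to a decomposition of $J$, exhibiting $M$ as an RSH$_0$-algebra. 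For the quotient, Proposition~\ref{prop: quotients} gives $P_k/M\cong \mathrm{C}_0(X_k\setminus U_k,\M_{n_k})\oplus_{\mathrm{C}_0(X_k^{(0)}\setminus U_k,\M_{n_k})}(P_{k-1}/J)$, again a restriction-type pullback over the RSH$_0$-algebra $P_{k-1}/J$. Since the new base spaces $U_k$ and $X_k\setminus U_k$ are an open and a closed subspace of $X_k$, and inductively the spaces occurring in $J$ and $P_{k-1}/J$ are subspaces of those of $P_{k-1}$, the covering dimension does not increase.

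For (ii), sufficiency is again an induction, this time running upward from the unital base $P_l$ of (c): $P_l$ unital has compact spectrum, and each subsequent stage sits in the extension $0\to \mathrm{C}_0(X_i\setminus X_i^{(0)},\M_{n_i})\to P_i\xrightarrow{\pi_2} P_{i-1}\to 0$ with $X_i$ compact and $\rho_{i-1}$ full by (b); one checks that $\widehat{P_i}$ is then the image of the compact space $X_i\sqcup \widehat{P_{i-1}}$ under the gluing induced by $\rho_{i-1}$ (fullness guaranteeing every point of $X_i^{(0)}$ is identified with a point of $\widehat{P_{i-1}}$), hence compact; equivalently one exhibits a single full generator $(a-\epsilon)_+$ and invokes Corollary~\ref{cor: compact}. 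The necessity direction is the \emph{crux} of the whole proposition. One starts from compact spectrum and an arbitrary decomposition: the quotient $P_{k-1}=A/\ker\pi_2$ has compact spectrum (its spectrum is a closed subspace of the compact $\widehat A$), so by induction it admits a decomposition satisfying (a)--(c) with a unital base; it then remains to re-present $A$ as a pullback $\mathrm{C}_0(Y_k,\M_{n_k})\oplus_{\mathrm{C}_0(Y_k^{(0)},\M_{n_k})}P_{k-1}$ over a compact $Y_k$ and a full restriction map. The hard part is precisely this normalization: one uses the full generator of $A$ from Corollary~\ref{cor: compact} to pass to a compact base space carrying it, and must simultaneously enforce the fullness condition (b) and preserve the restriction-map form while keeping the covering dimension bounded by that of $X_k$. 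Shrinking $X_k^{(0)}$ naively does not work, since the pullback over $\mathrm{C}_0(X_k^{(0)},\M_{n_k})$ is strictly more restrictive than over a proper open piece, so the compactification of the base and the fullness normalization must be carried out together.

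Once (ii) is available, (iii) follows by iterating Proposition~\ref{prop: unitization}(2): with $P_l$ unital one has $\widetilde{P_l}\cong P_l\oplus\C$, and for $l+1\le i\le k$ the unitization of the pullback $P_i = \mathrm{C}_0(X_i,\M_{n_i})\oplus_{\mathrm{C}_0(X_i^{(0)},\M_{n_i})}P_{i-1}$ (unital top algebra, since $X_i$ is compact) is $\mathrm{C}(X_i,\M_{n_i})\oplus_{\mathrm{C}(X_i^{(0)},\M_{n_i})}\widetilde{P_{i-1}}$ with connecting map $\rho_{i-1}^\dagger$, which is exactly the stated decomposition of $\widetilde A$; all algebras and maps are unital because the base spaces are compact and the restriction maps are surjective. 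For (iv) I would tensor the entire defining pullback diagram by $\mathrm{C}_0(X)$: exactness of $\mathrm{C}_0(X)\otimes(-)$ preserves the pullback (surjectivity of $\phi$ is preserved), and $\mathrm{C}_0(X)\otimes \mathrm{C}_0(X_i,\M_{n_i})\cong \mathrm{C}_0(X\times X_i,\M_{n_i})$ with the tensored restriction maps still restrictions, giving an RSH$_0$-decomposition with base spaces $X\times X_i$; when $X$ and $\widehat A$ are compact, a decomposition of $A$ from (ii) tensors to one satisfying (a)--(c) (compact products, preserved fullness, unital base $\mathrm{C}(X)\otimes P_l$), so (ii) yields compact spectrum. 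Note the dimension may grow here, consistent with (iv) being absent from the final dimension clause. Finally, for (v), parts (ii) and (iii) make $\widetilde A$ a unital RSH-algebra, and since $p\in A$ one has $pAp = p\widetilde A p$, a corner of a unital RSH-algebra; that such corners are RSH with no increase of topological dimension is Phillips's result \cite{PhillipsRSH}. The main obstacle throughout is the necessity half of (ii), i.e. producing a decomposition with properties (a)--(c) from compact spectrum while respecting the dimension bound.
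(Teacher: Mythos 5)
Your handling of (i), (iii), (iv) and (v) is correct and is essentially the paper's own argument: ideals and quotients of the top pullback via Corollary~\ref{cor: compact ideals} and Proposition~\ref{prop: quotients}, unitization via Proposition~\ref{prop: unitization}, tensoring the defining pullback diagrams for (iv), and $pAp=p\widetilde Ap$ together with Phillips's corner theorem for (v). The genuine gap is exactly where you flag it yourself: you never prove the necessity half of (ii). You reduce it to ``re-present $A$ as a pullback over a compact $Y_k$ with a full restriction map,'' describe the normalization that would be needed, assert that naive shrinking of $X_k^{(0)}$ fails, and then stop, calling this ``the main obstacle throughout.'' A description of the difficulty is not an argument, and the omission is not localized: (iii), (v) and the final dimension clause all consume the decomposition produced by (ii), so as written the proposition is not established.

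What you are missing is that the tool you already used for part (i) closes this gap in one stroke, with no by-hand normalization at all. Apply Corollary~\ref{cor: compact ideals} to $M=A$ itself, regarded as an ideal of $P_k=\mathrm{C}_0(X_k,\M_{n_k})\oplus_{\mathrm{C}_0(X_k^{(0)},\M_{n_k})}P_{k-1}$. By Theorem~\ref{th: ideals}, $A$ is \emph{canonically} the pullback $\check\pi_1(A)\oplus_{\phi(\check\pi_1(A))}P_{k-1}$, and by part (iii) of that theorem compact containment in $\Lat(P_k)$ is detected coordinatewise; since $A\ll A$, both $I=\check\pi_1(A)$ and $J=\check\pi_2(A)=P_{k-1}$ have compact spectrum, and so does $K=\phi(I)$, being a quotient of $I$. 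Now a compact-spectrum ideal of $\mathrm{C}_0(X_k,\M_{n_k})$ is $\mathrm{C}(Y_k,\M_{n_k})$ for a compact \emph{open} $Y_k\subseteq X_k$ (this is the elementary fact the paper invokes), so $\dim Y_k\le\dim X_k$, the map $I\to K=\mathrm{C}\bigl(Y_k\cap X_k^{(0)},\M_{n_k}\bigr)$ is still a restriction map, and the fullness condition (b) at the top level, $\I\bigl(\rho_{k-1}(P_{k-1})\bigr)=K$, is precisely condition (1) of Theorem~\ref{th: ideals} ($\phi(I)=\check\psi(J)$). One then recurses on $P_{k-1}$, which has compact spectrum by the observation you yourself made. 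In other words, the ``simultaneous'' compactification-plus-fullness normalization you thought had to be engineered is automatic, because the pair $(I,J)$ is not chosen but is forced by the ideal-lattice isomorphism; your worry arose from trying to modify the given decomposition by hand instead of invoking the canonical presentation. (For (c) one then only has to fix the unital initial segment: each $\pi_2$ is surjective, so non-unitality propagates upward along the tower, and the unital degree $l$ can be taken to be the length of the maximal unital initial segment of the resulting decomposition.)
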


\begin{proof}
The first part of the proposition is a consequence of Corollary \ref{cor: compact ideals} and Proposition \ref{prop: quotients}. The second part follows using Corollary \ref{cor: compact ideals} and that every ideal with compact spectrum of a C*-algebra of the form $\M_n(\mathrm C(X))$, with $X$ locally compact and Hausdorff, has the form $\M_n(\mathrm C(Y))$ for some compact open subset $Y$ of $X$. The third part follows by Proposition \ref{prop: unitization}; that is, using that the following diagrams are pullbacks:
\begin{equation*}
\xymatrix{\widetilde P_l \ar[d] \ar[r] &P_l \ar[d] \\ \C(\cong\mathrm{C}(\{y_l\})\ar[r] & 0}
\quad\qquad
\xymatrix{\widetilde P_{i+1} \ar[d]_{\left(\pi_1^{(i+1)}\right)^\dagger} \ar[r]^{\widetilde{\pi_2^{(i+1)}}} & \widetilde P_i\ar[d]^{\rho_i^\dagger} \\ \mathrm{C}(X_{i+1}, \M_{n_{i+1}})\ar[r] & \mathrm{C}(X_{i+1}^{(0)}, \M_{n_{i+1}})}
\end{equation*}
for $l\le i\le k-1$, where $\pi_1^{(i+1)}\colon P_{i+1}\to \mathrm{C}(X_{i+1}, \M_{n_{i+1}})$ and 
$\pi_2^{(i+1)}\colon P_{i+1}\to P_i$ denote the projection maps.
The fourth part of the proposition is a consequence of \cite[Theorem 3.9]{Pedersen}. 
Let us prove the last part of the proposition. By (i) and (ii) the C*-algebra $\widetilde A$ is a RSH-algebra. Hence, $pAp=p\widetilde A p$ is a RSH-algebra by \cite[Corollary 1.11]{PhillipsRSH}.
\end{proof}

The following theorem is a corollary of the previous proposition and \cite[Corollary 2.1]{Ping-Winter}:
\begin{proposition}\label{thm: ASH}
Let $A$ be a separable C*-algebra that can be written as the inductive limit of subhomogeneous algebras. Then $A$ can be written as the inductive limit of RSH$_0$-algebras of finite topological dimension.
\end{proposition}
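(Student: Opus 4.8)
The plan is to reduce the statement to the unital case treated by \cite[Corollary 2.1]{Ping-Winter} and then to descend to $A$ through an ideal, using Proposition~\ref{prop: permanence}(i) to keep the topological dimension under control. Write $A=\varinjlim(A_n,\phi_{n,n+1})$ with each $A_n$ separable and subhomogeneous. Since unitization commutes with inductive limits, $\widetilde A=\varinjlim(\widetilde{A_n},\widetilde{\phi_{n,n+1}})$, and each $\widetilde{A_n}$ is again separable, unital, and subhomogeneous (adjoining a unit only adds the one-dimensional irreducible representation that vanishes on $A_n$, so the bound on the dimensions of the irreducible representations is unchanged). Thus $\widetilde A$ is a unital separable C*-algebra that is an inductive limit of subhomogeneous algebras, and \cite[Corollary 2.1]{Ping-Winter} applies to give a decomposition $\widetilde A=\varinjlim(R_m,\gamma_{m,m+1})$ in which each $R_m$ is an RSH-algebra of finite topological dimension.

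Next I would recover $A$ from this decomposition. Let $q\colon\widetilde A\to\C$ be the canonical character, so that $A=\ker q$, and set $\chi_m=q\circ\gamma_{m,\infty}\colon R_m\to\C$ and $I_m=\ker\chi_m$. Each $I_m$ is an ideal of $R_m$ satisfying $\gamma_{m,m+1}(I_m)\subseteq I_{m+1}$, and I claim that $A=\varinjlim(I_m,\gamma_{m,m+1}|_{I_m})$. The nontrivial inclusion is that every $a\in A$ is approximated by elements of $\bigcup_m\gamma_{m,\infty}(I_m)$: given $r\in R_m$ with $\gamma_{m,\infty}(r)\approx a$, the corrected element $r-\chi_m(r)\,1_{R_m}$ lies in $I_m$, and since $\chi_m(r)=q(\gamma_{m,\infty}(r))\approx q(a)=0$ while $\gamma_{m,\infty}$ is contractive, $\gamma_{m,\infty}(r-\chi_m(r)1_{R_m})$ is still close to $a$; here it is essential that $R_m$ be unital. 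Finally, Proposition~\ref{prop: permanence}(i) shows that each ideal $I_m$ is itself an RSH$_0$-algebra and, by its concluding statement, admits a decomposition of topological dimension no larger than that of $R_m$; in particular the $I_m$ have finite topological dimension. Hence $A=\varinjlim I_m$ exhibits $A$ as an inductive limit of RSH$_0$-algebras of finite topological dimension, as desired.

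The routine ingredients here are the commutation of unitization with inductive limits and the identification of the ideal $A\subseteq\widetilde A$ with the limit of the pulled-back ideals $I_m$. The step that carries the real weight is the passage from the unital building blocks $R_m$ produced by \cite{Ping-Winter} to non-unital building blocks for $A$ while preserving finite topological dimension: this is exactly the content that Proposition~\ref{prop: permanence}(i) supplies, since without the dimension bound on ideals of an RSH-algebra the conclusion would be empty. I therefore expect the only point requiring care to be the bookkeeping of the dimension through the ideal $I_m\subseteq R_m$, together with the verification (using unitality of $R_m$) that the $I_m$ do assemble to $A$ rather than to a proper ideal.
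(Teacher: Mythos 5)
Your proof is correct and follows essentially the same route as the paper's: unitize, note that $\widetilde A$ is a unital inductive limit of subhomogeneous algebras, apply \cite[Corollary 2.1]{Ping-Winter} to write $\widetilde A$ as a limit of RSH-algebras, and then realize $A$ as the limit of the ideals pulled back from the finite stages, controlling their dimension via Proposition \ref{prop: permanence}(i). The only difference is that you spell out the ideals $I_m=\ker(q\circ\gamma_{m,\infty})$ and verify the limit identification explicitly, details the paper leaves implicit.
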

\begin{proof}
Let $A$ be as in the statement of the theorem. Since the class of subhomogeneous C*-algebras is closed under the operation of adjoining a unit the C*-algebra $\widetilde A$ is the inductive limit of unital subhomogeneous C*-algebras. It follows now by \cite[Corollary 2.1]{Ping-Winter} that $\widetilde A$ is the inductive limit of RSH-algebras. Since $A$ is an ideal of $\widetilde A$ it can be written as an inductive limit of ideals of the finite stage C*-algebras. By (i) of Proposition \ref{prop: permanence} these ideals are RSH$_0$-algebras that can be taken to have topological dimension no greater than the topological dimension of the given algebras. In particular, they can be taken with finite topological dimension.
\end{proof}

\begin{definition}\label{def: standarddecomposition}
Let $A$ be a RSH$_0$-algebra with compact spectrum. We say that a decomposition of $A$ is a standard decomposition if it satisfies (a), (b), and (c) of (ii) of Proposition \ref{prop: permanence}. The integer $l$ associated to this decomposition as in (c) will be called the unital degree of the decomposition.
\end{definition}

Let $A$ be a C*-algebra. We denote by $\mathrm{U}(A)$ and $\mathrm{U}_0(A)$ the unitary group of the unitization $\widetilde A$ of $A$ and the subgroup of $\mathrm{U}(A)$ consisting of the unitaries that are connected to the identity. If $a\in A$ we denote by $|a|$ the element $(a^*a)^{\frac 1 2}$.

The following proposition is a minor modification of \cite[ Proposition 3.4]{PhillipsIRSH} (the assumption on the dimension of the subspaces $E_x$ is only assumed to hold on the total space of $A$ instead of the total space of $\widetilde A$). 

\begin{proposition}\label{prop: polar}
Let $A$ be a RSH$_0$-algebra with compact spectrum. Suppose that a standard decomposition of $A$ is given and let $X$, $m$, and $d$ be its total space, matrix size function, and topological dimension function (as in Definition \ref{def: RSH0}). Let $\alpha,\epsilon>0$. Let $a\in \widetilde A$, and suppose that for every $x\in X$ there is a subspace $E_x$ of $\C^{m(x)}$ with $\dim(E_x)\ge \frac 1 2 d(x)$ and such that $\|\mathrm{ev}_x(a)\xi\|<\alpha\|\xi\|$ for $\xi\in E_x\setminus\{0\}$. Then there is a unitary $u\in \mathrm{U}_0(A)$ such that 
\begin{align}\label{eq: polar}
\|u|a|-a\|<2\alpha+\epsilon.
\end{align}
\end{proposition}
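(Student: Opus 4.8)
The plan is to reduce Proposition \ref{prop: polar} to its unital counterpart, \cite[Proposition 3.4]{PhillipsIRSH}, by passing to the unitization $\widetilde A$ and checking that the hypotheses transfer appropriately. Since $A$ has compact spectrum and a standard decomposition with unital degree $l$, part (iii) of Proposition \ref{prop: permanence} tells us that $\widetilde A$ is a genuine RSH-algebra with an explicit decomposition. The total space $Y$ of this RSH-decomposition of $\widetilde A$ consists of the total space $X$ of $A$ together with the single extra point $y_l$ introduced when we adjoin the unit to the unital block $P_l$; concretely, $Y = X \sqcup \{y_l\}$, and $\mathrm{ev}_{y_l}\colon \widetilde A \to \M_1(\C) = \C$ is the scalar part of the unitization (i.e.\ $\mathrm{ev}_{y_l}(a + \lambda 1) = \lambda$). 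Here the matrix size $m(y_l) = 1$ and the topological dimension $d(y_l) = \dim(\{y_l\}) = 0$.

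The heart of the argument is verifying that the norm-compression hypothesis extends from $X$ to all of $Y$. For $x \in X$ the hypothesis is given: there is a subspace $E_x \subseteq \C^{m(x)}$ with $\dim(E_x) \ge \tfrac12 d(x)$ on which $\mathrm{ev}_x(a)$ is a strict contraction by the factor $\alpha$. The only new point is $y_l$, where $\C^{m(y_l)} = \C^1$ has dimension $1$ but the required dimension lower bound is $\dim(E_{y_l}) \ge \tfrac12 d(y_l) = 0$. Thus I may simply take $E_{y_l} = \{0\}$, which trivially satisfies the dimension bound and vacuously satisfies the contraction condition $\|\mathrm{ev}_{y_l}(a)\xi\| < \alpha\|\xi\|$ for $\xi \in E_{y_l}\setminus\{0\} = \varnothing$. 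This is precisely the point flagged in the remark preceding the proposition: the dimension condition is imposed only over the total space of $A$, not over that of $\widetilde A$, and the extra point $y_l$ carries topological dimension $0$ so it imposes no genuine constraint. With $E_x$ now defined for every $x \in Y$ and the dimension bounds holding throughout, the hypotheses of \cite[Proposition 3.4]{PhillipsIRSH} are met for the RSH-algebra $\widetilde A$.

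Applying \cite[Proposition 3.4]{PhillipsIRSH} directly to $\widetilde A$ and $a \in \widetilde A$ yields a unitary $u$ in the connected component of the identity of the unitary group of $\widetilde A$ with $\|u|a| - a\| < 2\alpha + \epsilon$, which is the desired inequality \eqref{eq: polar}. It remains only to observe that this $u$ lies in $\mathrm{U}_0(A)$ in the sense of the paper's conventions: since $\mathrm{U}(A)$ is defined as the unitary group of $\widetilde A$ and $\mathrm{U}_0(A)$ as its identity component, the unitary produced by the cited proposition is by construction an element of $\mathrm{U}_0(A)$, so no further adjustment is needed.

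The main obstacle, such as it is, is bookkeeping rather than mathematics: one must be careful that the standard decomposition of $A$ and the induced RSH-decomposition of $\widetilde A$ from Proposition \ref{prop: permanence}(iii) really do produce a total space of the form $X \sqcup \{y_l\}$ with the claimed matrix size and dimension data at $y_l$, so that the dimension hypothesis of the unital result is satisfied at every point. I would therefore spell out the identification of the evaluation maps of $\widetilde A$ with those of $A$ on $X$ (they agree, since the decomposition of $\widetilde A$ restricts to that of $A$ on the blocks indexed by $0 \le i \le k$ away from the adjoined scalar) and confirm that the sole extra evaluation point contributes dimension $0$. Once this identification is in place, the result follows immediately from the unital version with no new estimates.
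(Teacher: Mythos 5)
Your proof is correct, and it takes a genuinely different --- and much shorter --- route than the paper's. The paper does not apply \cite[Proposition 3.4]{PhillipsIRSH} as a black box to $\widetilde A$. Instead it reruns Phillips' induction, inducting on $k-l$ (length minus unital degree of the standard decomposition), proving the refined pointwise claim $\|\mathrm{ev}_x(u|a|-a)(1-p(x))\|<\epsilon$ for the spectral projections $p(x)=\chi_{(-\infty,\alpha)}(\mathrm{ev}_x(|a|))$, invoking \cite[Lemma 3.3]{PhillipsIRSH} to extend across each pullback stage, and handling the adjoined point by hand: in the base case the unitary is taken to be $(v,\lambda')$ with $\lambda'=\lambda/|\lambda|$. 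That hand-chosen scalar is exactly the freedom your argument leaves inside the citation, since on your reading Phillips' induction itself selects the value of $u$ at $y_l$ (a stage whose boundary space is empty, hence unconstrained). What your route buys is economy: given Proposition \ref{prop: permanence}(iii), the proposition becomes a formal consequence of the unital result, because the single new evaluation point has $m(y_l)=1$ and $d(y_l)=0$, so the hypothesis there is met by $E_{y_l}=\{0\}$. What the paper's route buys is insulation from the exact wording of the cited statement: it leans only on Phillips' proof machinery (the inner inductive claim and Lemma 3.3), never on whether his stated hypothesis is satisfiable at the adjoined point.

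That last distinction is the one thing to nail down before considering your proof finished, because your entire argument is a single citation. If Phillips' dimension bound read $\dim(E_x)\ge\tfrac12(d(x)+1)$, or if his subspaces $E_x$ were required to be nonzero, then the hypothesis at $y_l$ would force $|\mathrm{ev}_{y_l}(a)|\le\alpha$ --- a bound on the scalar part of $a$ that Proposition \ref{prop: polar} does not assume --- and your reduction would fail, with no quick repair short of reopening the induction as the paper does. Your reading (pointwise bound $\tfrac12 d(x)$, degenerate subspaces allowed) is the right one, and in fact the paper's own base case tacitly depends on it: it asserts that an element satisfying only the $\tfrac12 d(x)$ bound ``satisfies the conditions of'' Phillips' proposition, which can only be true if that hypothesis is vacuous at points of dimension zero. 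So the two proofs stand or fall together on this point; still, since everything hangs on it, quote Phillips' statement verbatim rather than inferring its form from the proposition you are proving and from the remark that precedes it.
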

\begin{proof}
Let $k$, $l$, and $X$ be the length, unital degree, and total space of the given standard decomposition of $A$. By (iii) of Proposition \ref{def: standarddecomposition} the total space of $\widetilde A$ is the disjoin union of $X$ and a singleton space $\{x_0\}$. Let us prove the following by induction in the number $k-l$: Let $\epsilon>0$. Let $a\in \widetilde A$, with $\|a\|\le 1$, be as in the statement of the proposition and let $p(x)=\chi_{(-\infty, \alpha)}(\mathrm{ev}_x(|a|))$ for $x\in X\sqcup\{x_0\}$. Then there is a unitary $u\in \mathrm{U}_0(A)$ such that
\begin{align*}
\|\mathrm{ev}_x(u|a|-a)(1-p(x))\|<\epsilon,
\end{align*}   
for all $x\in X\sqcup\{x_0\}$.
As in the proof of \cite[Proposition 3.4]{PhillipsIRSH} the previous inequality and the inequality $\|\mathrm{ev}_x(a)p(x)\|\le \alpha$, for $x\in X\sqcup \{x_0\}$, imply the inequality \eqref{eq: polar} (the former clearly holds by the definition of $p$).   

If $k-l=0$ then $A$ is unital and so it is a RSH-algebra. Also, $\widetilde A\cong A\oplus \C$, where $\C$ is identified with $\mathrm{C}(\{x_0\})$. Let us write $a$ as pair $(b, \lambda)\in A\oplus \C$. Then $b$ satisfies the conditions of \cite[Proposition 3.4]{PhillipsIRSH}. Therefore, by the result stated in proof of the same proposition (see the second paragraph of the proof) there is a unitary $v\in A$ such that  
\begin{align*}
\|\mathrm{ev}_x(v|b|-b)(1-p(x))\|<\epsilon,
\end{align*}   
for all $x\in X$. The case $k-l=0$ now follows by taking $u=(v, \lambda')$, where $\lambda'=1$ if $\lambda=0$ and $\lambda'=\frac{\lambda}{|\lambda|}$ otherwise.

Now suppose that the result holds for all standard decompositions with $k-l=N>0$ and suppose that for the given standard decomposition of $A$ one has $k-l=N+1$. Write $A=B\oplus_{\mathrm{C}(X^{(0)}, \M_n)}\mathrm{C}(X, \M_n)$, where $B$ denotes the $(k-1)$-pullback in the given decomposition. Then the difference between the length of $B$ and its unital degree is $N$. 
Also, if $Y$ denote the total space of $B$ then the total spaces of $\widetilde B$, $A$, and $\widetilde A$ are $Y\sqcup \{x_0\}$, $X\sqcup Y$, and $X\sqcup Y\sqcup \{x_0\}$, respectively. Let $b$ be the image of $a$ in $\widetilde B$ and let $a_0$ be the image of $a$ in $\mathrm{C}(X, \M_n)$ under the projections maps given by the pullback. Then $\chi_{(-\infty, \alpha)}(\mathrm{ev}_y(|b|))=p(y)$ for $y\in Y\sqcup \{x_0\}$. For the given values of $\alpha$, $\epsilon$, and $n$, let $\delta$ be as in \cite[Lemma 3.3]{PhillipsIRSH}. Since $b\in \widetilde B$ satisfies the conditions of the proposition and $\widetilde B$ satisfies the induction assumptions there is a unitary $v\in \mathrm{U}_0(B)$ such that 
$$\|\mathrm{ev}_y(v|b|-b)(1-p(y))\|<\delta$$
for all $y\in Y$.
The rest of proof now follows line by line the proof of \cite[Proposition 3.4]{PhillipsIRSH} starting in the fourth paragraph.
\end{proof}

\begin{definition}\label{def: dimension growth}
Let $(A_i, \phi_{i,j})_{i,j}$ be a system of RSH$_0$-algebras. For each $i$ let $X_i$ denote the total space of $A_i$ ((ii) of Definition \ref{def: RSH0}). We say that the system $(A_i, \phi_{i,j})_{i,j}$ has no dimension growth if $\sup_i(\dim X_i)<\infty$.
\end{definition}

\begin{lemma}\label{lem: rank}
Let $A=\varinjlim (A_i, \phi_{i,j})$ be an inductive limit of a system of RSH$_0$-algebras with no dimension growth. Suppose that for each $i\ge 1$ the C*-algebra $A_i$ has compact spectrum and that the given decomposition of $A_i$ is standard. Let $X_i$ be the total space of $A_i$. Let $a\in \widetilde{A}_k$, for some $k$, be a noninvertible  element. Let $\epsilon>0$. Suppose that for every $N\in \N$ there exists $j\ge k$ such that for every $x\in X_j$,  
\begin{align}\label{eq: rank}
\mathrm{rank}\left(\mathrm{ev}_x(\widetilde{\phi}_{k,j}(f_\epsilon(|a|)))\right)\ge N,
\end{align}
for some continuous function $f_\epsilon\colon (0,\infty)\to [0,1]$ satisfying $f_\epsilon(x)>0$ for $x\in (0,\epsilon)$ and $f_\epsilon(x)=0$ for $x\in [\epsilon , \infty)$. Then there exists a unitary $u\in \widetilde{A}$ such that
\begin{align*}
\|\widetilde{\phi}_{k,\infty}(a)-u|\widetilde{\phi}_{k,\infty}(a)|\|<5\epsilon.
\end{align*}
\end{lemma}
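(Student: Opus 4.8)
The plan is to reduce the statement to a single stage of the inductive system and there invoke the polar-approximation result of Proposition \ref{prop: polar}. Write $D=\sup_i(\dim X_i)$, which is finite by the no dimension growth hypothesis (Definition \ref{def: dimension growth}), and fix an integer $N_0$ with $N_0\ge D/2$. I would then apply the hypothesis \eqref{eq: rank} with this single value $N=N_0$ to obtain an index $j\ge k$ such that
$$\mathrm{rank}\left(\mathrm{ev}_x\left(\widetilde\phi_{k,j}(f_\epsilon(|a|))\right)\right)\ge N_0$$
for every $x\in X_j$. The reason no dimension growth is needed here is precisely that a single finite $N_0$ must be able to dominate $\tfrac12 d(x)$ simultaneously for all $x$ in all total spaces $X_j$; without it no fixed stage could meet the dimension requirement of Proposition \ref{prop: polar}.

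Next I would set $a_j=\widetilde\phi_{k,j}(a)\in\widetilde A_j$ and translate the rank bound into the subspace condition of Proposition \ref{prop: polar}. Since the continuous functional calculus and the evaluation maps commute with $*$-homomorphisms, $\mathrm{ev}_x(\widetilde\phi_{k,j}(f_\epsilon(|a|)))=f_\epsilon(|\mathrm{ev}_x(a_j)|)$, so for each $x\in X_j$ the positive matrix $|\mathrm{ev}_x(a_j)|$ has at least $N_0$ eigenvalues (with multiplicity) in the support of $f_\epsilon$, hence in $[0,\epsilon)$. Let $E_x\subseteq\C^{m(x)}$ be the spectral subspace of $|\mathrm{ev}_x(a_j)|$ for the interval $[0,\epsilon)$. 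Then the range of $\mathrm{ev}_x(\widetilde\phi_{k,j}(f_\epsilon(|a|)))$ is contained in $E_x$, so $\dim(E_x)\ge N_0\ge D/2\ge \tfrac12 d(x)$; and for a unit vector $\xi\in E_x$ one has $\|\mathrm{ev}_x(a_j)\xi\|^2=\langle|\mathrm{ev}_x(a_j)|^2\xi,\xi\rangle<\epsilon^2$, whence $\|\mathrm{ev}_x(a_j)\xi\|<\epsilon\|\xi\|$ for every $\xi\in E_x\setminus\{0\}$.

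The subspaces $(E_x)_{x\in X_j}$ now witness the hypothesis of Proposition \ref{prop: polar} for the RSH$_0$-algebra $A_j$ with $\alpha=\epsilon$ (recall that $A_j$ has compact spectrum and a standard decomposition). Choosing any $\epsilon'\in(0,\epsilon)$ for the second parameter in that proposition yields a unitary $u_j\in\mathrm{U}_0(A_j)$ with $\|u_j|a_j|-a_j\|<2\epsilon+\epsilon'$. Setting $u=\widetilde\phi_{j,\infty}(u_j)$, a unitary in $\widetilde A$, and applying the contractive $*$-homomorphism $\widetilde\phi_{j,\infty}$ (which intertwines absolute values and satisfies $\widetilde\phi_{j,\infty}(a_j)=\widetilde\phi_{k,\infty}(a)$) gives
$$\|\widetilde\phi_{k,\infty}(a)-u|\widetilde\phi_{k,\infty}(a)|\|=\|\widetilde\phi_{j,\infty}(u_j|a_j|-a_j)\|\le\|u_j|a_j|-a_j\|<2\epsilon+\epsilon'<5\epsilon,$$
as required; in fact this gives the sharper bound $3\epsilon$, and the stated constant $5\epsilon$ leaves comfortable slack.

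The essential content is carried entirely by Proposition \ref{prop: polar}; the role of this lemma is to pass from a single element $a$ at stage $k$ to a stage $j$ where that proposition's hypothesis holds uniformly over the total space. Accordingly, the main point to get right—and the only place the no dimension growth assumption enters—is that the rank lower bound furnished by \eqref{eq: rank} converts into the lower bound $\tfrac12 d(x)$ on $\dim(E_x)$ through one fixed choice of $N_0$. The remaining ingredients, namely the spectral-subspace norm estimate and the transport of the unitary along $\widetilde\phi_{j,\infty}$, are routine, so I do not anticipate a serious obstacle beyond careful bookkeeping of the functional-calculus identities.
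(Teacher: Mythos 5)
Your proposal is correct and follows essentially the same route as the paper's own proof: use no dimension growth to fix one sufficiently large $N$, pass to a stage $j$ where \eqref{eq: rank} holds for all $x\in X_j$, build subspaces $E_x$ from the spectral data of $f_\epsilon(|\mathrm{ev}_x(\widetilde\phi_{k,j}(a))|)$, invoke Proposition \ref{prop: polar} at stage $j$, and push the resulting unitary into $\widetilde A$ by the contractive unital map $\widetilde\phi_{j,\infty}$. The only difference is cosmetic: by taking $E_x$ to be the spectral subspace of $|\mathrm{ev}_x(\widetilde\phi_{k,j}(a))|$ for $[0,\epsilon)$ you get $\alpha=\epsilon$ directly, whereas the paper takes $E_x$ to be the range of $\mathrm{ev}_x(\widetilde\phi_{k,j}(f_\epsilon(|a|)))$ and estimates via an auxiliary function $g_\epsilon$ with $\alpha=2\epsilon$, which is why you land at the sharper bound $3\epsilon$ inside the stated $5\epsilon$.
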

\begin{proof}
Let $m_i\colon X_i\to \N$ be the matrix size function of $A_i$. Let $N$ be a number such that $N>\sup_i\dim X_i$ ($N$ is finite since by assumption the system $(A_i, \phi_{i,j})_{i,j}$ has no dimension growth). Let $\epsilon>0$ and let $a$ be as in the statement of the lemma. Then there is $j\ge k$ such that  
\begin{align*}
\mathrm{rank}\left(\mathrm{ev}_x(\widetilde \phi_{k,j}(f_{\epsilon}(|a|)))\right)\ge N,
\end{align*}
for all $x\in X_j$. Given $x\in X_j$ let $E_x$ denote the subspace of $\C^{m_j(x)}$ defined by $E_x=\mathrm{ev}_x(\widetilde \phi_{k,j}(f_{\epsilon}(|a|)))\C^{m_j(x)}$. Then by the previous inequality 
$$\dim E_x\ge N>\frac{\dim X_j}{2}.$$

Let $g_\epsilon\colon [0,\infty)\to [0,1]$ be a continuous function such that $g_\epsilon(x)=1$ if $x\in [0,\epsilon]$ and $g_\epsilon(x)=0$ if $x\in [2\epsilon, \infty]$. Then 
\begin{align*}
\||a|g_\epsilon(|a|)\|<2\epsilon,  \quad \mathrm{ev}_x(\widetilde \phi_{k,j})(g_\epsilon(|a|))\xi=\xi
\end{align*}
for all $\xi \in E_x$. It follows that 
\begin{align*}
\|\mathrm{ev}_x(\widetilde \phi_{k,j}(a))\xi\|=\|\mathrm{ev}_x(\widetilde \phi_{k,j}(|a|))\xi\|= \|\mathrm{ev}_x(\widetilde\phi_{k,j}(|a|g_\epsilon(|a|)))\xi\|< 2\epsilon \|\xi\|, 
\end{align*}
for  all $\xi\in E_x\setminus \{0\}$. This shows that $\widetilde\phi_{k,j}(a)\in \widetilde A_j$ satisfies the conditions of Proposition \ref{prop: polar}. Therefore, there is a unitary $v\in \mathrm{U}(\widetilde A_i)$ such that
$$\|\widetilde\phi_{k,j}(a)-v|\widetilde\phi_{k,j}(a)|\|<5\epsilon.$$
Using that *-homomorphisms are contractive we conclude that
$$\|\widetilde \phi_{k,\infty}(a)-u|\widetilde \phi_{k,\infty}(a)|\|<5\epsilon,$$
where $u=\widetilde\phi_{j,\infty}(u)\in \mathrm{U}(\widetilde A)$.
\end{proof}

Let $A$ anf $B$ be C*-algebras a let $\phi\colon A\to B$ be a *-homomorphism. Recall that $\check{\phi}\colon \mathrm{Lat}(A)\to \mathrm{Lat}(B)$ denotes the lattice map defined by $\check\phi(I)=\I(I)$.

\begin{proposition}\label{prop: decomposition}
Let $A=\varinjlim (A_n, \phi_{n,m})$ be an inductive limit of RSH$_0$-algebras. Suppose that $A$ has compact spectrum. Then $A$ can be written as inductive limit $A=\varinjlim (B_n, \psi_{n,m})$, where $B_n$ are RSH$_0$-algebras with compact spectrum and the maps $\psi_{n,m}$ are injective and such that $\check\psi_{n,n+1}(B_n)=B_{n+1}$ (i.e., $\I(\psi_{n,n+1}(B_n))=B_{n+1}$). Moreover, if the system $ (A_n, \phi_{n,m})_{n,m}$ has no dimension growth the new system $ (B_n, \psi_{n,m})_{n,m\in \N}$ can be taken to have no dimension growth.
\end{proposition}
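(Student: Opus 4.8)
The plan is to build the new system in two stages. First I would replace the given building blocks by ideals with compact spectrum whose connecting maps are surjective in the $\check{}$-sense, and then collapse the kernels of the maps into the limit to make the connecting maps injective; at each stage I must check that the RSH$_0$-structure, compactness of the spectrum, the surjectivity condition, and the dimension bound all survive. The first stage is essentially Lemma \ref{idealsIL}: since $A$ has compact spectrum, it produces an index $k$ and ideals $I_n\subseteq A_n$ (for $n\ge k$) with compact spectrum satisfying $\check\phi_{n,n+1}(I_n)=I_{n+1}$ and $A=\varinjlim(I_n,\phi_{n,m}|_{I_n})$; after reindexing I may take $k=1$. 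The relation $\I(\phi_{n,n+1}(I_n))=I_{n+1}$ gives in particular $\phi_{n,n+1}(I_n)\subseteq I_{n+1}$, so the restricted maps $\theta_{n,m}:=\phi_{n,m}|_{I_n}$ genuinely form an inductive system. By (i) of Proposition \ref{prop: permanence} each $I_n$ is an RSH$_0$-algebra admitting a decomposition of topological dimension no greater than that of $A_n$. Thus the only property still to be arranged is injectivity of the connecting maps.

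For the second stage I would pass to the quotients $B_n:=I_n/K_n$, where $K_n:=\ker(\theta_{n,\infty})$ and $\theta_{n,\infty}\colon I_n\to A$ is the canonical map. Since $\theta_{m,\infty}\circ\theta_{n,m}=\theta_{n,\infty}$, the map $\theta_{n,m}$ carries $K_n$ into $K_m$ and hence descends to $\psi_{n,m}\colon B_n\to B_m$. These induced maps are injective: if $\psi_{n,m}(\bar x)=0$ then $\theta_{n,m}(x)\in K_m$, so $\theta_{n,\infty}(x)=\theta_{m,\infty}(\theta_{n,m}(x))=0$, i.e. $x\in K_n$ and $\bar x=0$. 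Collapsing each term of an inductive system by the kernel of its map into the limit does not change the limit: the induced maps $\bar\theta_{n,\infty}\colon B_n\to A$ are injective, compatible with the $\psi_{n,m}$, and have union of images $\bigcup_n\theta_{n,\infty}(I_n)$ dense in $A$, so the resulting map $\varinjlim(B_n,\psi_{n,m})\to A$ is an isomorphism. Hence $A=\varinjlim(B_n,\psi_{n,m})$ with injective connecting maps.

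It remains to verify that the three structural properties pass to the quotients. Each $B_n$ is an RSH$_0$-algebra by (i) of Proposition \ref{prop: permanence}, again with topological dimension no greater than that of $I_n$. Moreover $B_n$ has compact spectrum, since $\widehat{B_n}$ is homeomorphic to a closed subset of the compact space $\widehat{I_n}$ (equivalently, one may use Corollary \ref{cor: compact}: if $I_n=\I((a-\epsilon)_+)$ then $B_n=\I((q_n(a)-\epsilon)_+)$, writing $q_n\colon I_n\to B_n$ for the quotient map). Finally the surjectivity condition survives: the preimage $q_{n+1}^{-1}\big(\I(\psi_{n,n+1}(B_n))\big)$ is an ideal of $I_{n+1}$ containing both $\phi_{n,n+1}(I_n)$ and $K_{n+1}$, hence containing $\I(\phi_{n,n+1}(I_n))=I_{n+1}$; therefore it equals $I_{n+1}$ and, applying $q_{n+1}$, we get $\check\psi_{n,n+1}(B_n)=B_{n+1}$.

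The no-dimension-growth clause is then immediate: by Proposition \ref{prop: permanence} passing to ideals and then to quotients never raises the topological dimension, so if the original system has no dimension growth the dimensions of the total spaces of the $B_n$ stay bounded by those of the $A_n$, and the new system has no dimension growth. I expect the genuinely delicate points to be bookkeeping rather than conceptual: checking that $\check{}$-surjectivity is not destroyed by the quotient (the preimage argument above) and that compactness of the spectrum passes to the quotients, while the injectivity and limit-preservation are the standard device of collapsing the kernels of the maps into the limit. The two stages interact cleanly precisely because taking ideals and then quotients only shrink the spectrum and never increase the topological dimension.
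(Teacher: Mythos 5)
Your proposal is correct, and it is built from the same two ingredients as the paper's own proof --- Lemma \ref{idealsIL} to produce compact-spectrum ideals forming a $\check{\phantom{\phi}}$-surjective system, and the standard device of quotienting each term by the kernel of its map into the limit to force injectivity, with Proposition \ref{prop: permanence}(i) controlling the RSH$_0$-structure and the topological dimension throughout --- but you compose them in the opposite order. The paper first replaces $A_n$ by $A_n/\ker(\phi_{n,\infty})$, so that all connecting maps are injective, and only then runs the compact-ideal construction (inductively producing a subsequence $(k_n)$ and the ideals $B_n=\check\phi_{n,k_n}(A_n)$); the payoff of that ordering is that the second step needs no re-verification at all, since ideals inherit injectivity of the connecting maps simply by restriction. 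Your ordering (ideals first, quotients second) instead forces two descent arguments that the paper never needs: that compactness of the spectrum passes to the quotients $B_n=I_n/K_n$, and that the condition $\I(\psi_{n,n+1}(B_n))=B_{n+1}$ survives the quotient. You supply both correctly --- the spectrum of $I_n/K_n$ is a closed, hence quasi-compact, subset of $\widehat{I_n}$ (or, as in your parenthetical, one pushes the generator $(a-\epsilon)_+$ of Corollary \ref{cor: compact} through the quotient map, using $q_n(\I(x))=\I(q_n(x))$ for surjective $q_n$); and the preimage $q_{n+1}^{-1}\bigl(\I(\psi_{n,n+1}(B_n))\bigr)$ is a closed ideal of $I_{n+1}$ containing both $\phi_{n,n+1}(I_n)$ and $\ker q_{n+1}$, hence containing $\I(\phi_{n,n+1}(I_n))=I_{n+1}$. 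The one point you cite as standard and do not prove --- that collapsing by the kernels of the maps into the limit does not change the limit --- is also taken as standard in the paper; it follows since the injective maps $\bar\theta_{n,\infty}$ are isometric, so the induced map on the limit is isometric with dense, hence full, range. In sum, the two proofs are logically equivalent; the paper's ordering is marginally more economical, while yours has the small advantage of quoting Lemma \ref{idealsIL} verbatim rather than re-running its argument along a subsequence.
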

\begin{proof}
Let $\phi_{n,\infty}\colon A_n\to A$ denote the *-homomorphisms given by the inductive limit decomposition of $A$. These maps satisfy that $\phi_{n+1,\infty}\circ \phi_n=\phi_{n,\infty}$ for all $n$. For each $n\ge 1$ let $I_n$ denote the kernel of the map $\phi_n$. Then the kernel of $\phi_n$ is contained in $I_n$ and $\phi_{n}(I_n)\subseteq I_{n+1}$. It follows that the map $\phi_n\colon A_n\to A_{n+1}$ induces an injective map $\overline\phi_n\colon A_n/I_n\to A_{n+1}/I_{n+1}$. Also, it is not difficult to show that
\begin{align*}
\xymatrix{
A_1/I_1 \ar[r]^{\overline\phi_1} & A_2/I_2 \ar[r]^{\overline\phi_2 }& A_3/I_3 \ar[r]^{\overline\phi_3 }&\cdots\ar[r] &A,}
\end{align*}
is an inductive limit decomposition of $A$. By (i) of Proposition \ref{prop: permanence} the C*-algebras $A_n/I_n$, $n=1,2,\cdots$, are RSH$_0$-algebras. Furthermore, the dimension of the total space of $A_n/I_n$ is at most the dimension of the total space of $A_n$. Therefore, we may assume that the maps in the inductive limit decomposition of $A$ are injective.

Now let us use Lemma \ref{idealsIL} to construct a sequence $(k_n)_{n\in \N}$ and C*-algebras $B_n\in A_{k_n}$  such that $B_n=\check\phi_{n,k_n}(A_n)$, $B_n$ has compact spectrum, and $\check\phi_{k_n, k_{n+1}}(B_n)=B_{n+1}$.  Note that if such sequence of C*-algebras exists then
\begin{align*}
\xymatrix{
B_1 \ar[r]^{\phi_{k_1,k_2}} & B_2 \ar[r]^{\phi_{k_2,k_3} }& B_3 \ar[r]^{\phi_{k_3,k_4} }&\cdots\ar[r] &A,}
\end{align*}
is an inductive limit decomposition of $A$ satisfying the conclusions of the proposition. To see this note that the C*-algebras $B_n$ satisfy
\begin{align*}
\overline{\bigcup_{n=1}^\infty \phi_{k_{n},\infty}(B_n)}=\overline{\bigcup_{n=1}^\infty\phi_{k_{n},\infty}(\check\phi_{n,k_n}(A_n))}=\overline{\bigcup_{n=1}^\infty\phi_{n,\infty}(A_n)}=A.
\end{align*}
In addition, the algebras $B_n$ are RSH$_0$-algebras by (iii) of Proposition \ref{prop: permanence} and the dimension of the total space of $B_n$ is at most the dimension of the total space of $A_{k_n}$. Therefore, if the system $(A_n,\phi_n)_{n\in \N}$ has no dimension growth then the system $(B_n, \phi_{k_n, k_{n+1}})_{n\in \N}$ has no dimension growth.  

Let us proceed by induction to construct the sequences $(k_n)_{n\in \N}$ and $(B_n)_{n\in \N}$. By Lemma \ref{idealsIL} applied to the case $n=1$ there is a positive integer $k_1> 2$  such that the ideal $\check\phi_{1,k}(A_1)$ has compact spectrum, and $\check\phi_{2,k}(A_{2})=\check\phi_{1,k}(A_1)$ for all $k\ge k_1$. Set $B_1=\check\phi_{1,k_1}(A_1)$. Suppose that we have constructed the integer $k_n$ and the algebra $B_n$ and let us construct $k_{n+1}$ and $B_{n+1}$. By Lemma \ref{idealsIL} applied to the number $n+1$ there is $k_{n+1}> k_n$ such that the ideal $\check\phi_{n+1,k_{n+1}}(A_{n+1})$ has compact spectrum and $\check\phi_{n+1,k}(A_{n+1})=\check\phi_{n,k}(A_n)$  for all $k\ge k_{n+1}$. Set $B_{n+1}=\check\phi_{n+1,k_{n+1}}(A_{n+1})$. Then,
\begin{align*}
B_{n+1}=\check\phi_{n+1,k_{n+1}}(A_{n+1})=\check\phi_{n, k_{n+1}}(A_n)=\check\phi_{k_n,k_{n+1}}(B_n).
\end{align*}
We have found a C*-algebra $B_{n+1}$ with compact spectrum satisfying  $B_{n+1}=\check\phi_{k_n,k_{n+1}}(B_n)=\I(\phi_{k_n,k_{n+1}}(B_n))$.
This concludes the proof of the proposition. 
\end{proof}

\section{Main results}

\subsection{Reduction of the stable rank}
In this subsection we prove the stable rank results stated in Theorem \ref{thm: mainmain}.

\begin{lemma}\label{lem: invertible}
Let $A$ be a C*-algebra such that every quotient of $A$ is projectionless and every quotient of $\widetilde A$ is finite. Let $a\in \widetilde A$ be such that $a-1\in A$. Let $0<\delta<\min(1,\|a\|)$ and let $f\in \mathrm{C}_0(0,\infty)$ be such that $f(x)>0$ for $x\in (0,\delta)$ and $f(x)=0$ for $x\in [\delta, \infty)$. If $I$ denote the closed two-sided ideal generated by $f(|a|)$, with $|a|=(a^*a)^{\frac 1 2}$, then the image of $a$ in $\widetilde A/I$ is invertible. 
\end{lemma}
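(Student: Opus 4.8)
The plan is to analyze the quotient $B=\widetilde A/I$ directly: I will show that the image $\overline a$ of $a$ has invertible modulus, and then upgrade left-invertibility to genuine invertibility using the finiteness hypothesis.

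First I would verify that $I\subseteq A$. Let $q\colon \widetilde A\to \widetilde A/A\cong \C$ be the canonical character. Since $a-1\in A$ we have $q(a)=1$, hence $q(|a|)=(q(a^*a))^{1/2}=1$. Because $\delta<1$, the function $f$ vanishes at $1$, so $q(f(|a|))=f(q(|a|))=f(1)=0$; thus $f(|a|)\in A$, and since $A$ is an ideal of $\widetilde A$ the ideal $I=\I(f(|a|))$ is contained in $A$. Consequently $B=\widetilde A/I$ fits into a short exact sequence $0\to A/I\to B\to \C\to 0$, in which $A/I$ is a quotient of $A$ and hence projectionless, while $B$ is a quotient of $\widetilde A$ and hence finite.

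Next I would control the spectrum of $\overline{|a|}=|\overline a|$ in $B$. Since $f(|a|)\in I$ maps to $0$, we get $f(|\overline a|)=0$; as $f>0$ on $(0,\delta)$, this forces $\mathrm{sp}(|\overline a|)$ to miss $(0,\delta)$, so it is contained in $\{0\}\cup[\delta,\|a\|]$. The gap $(0,\delta)$ separates $0$ from the rest of the spectrum, so choosing a continuous $g$ with $g\equiv 1$ near $0$ and $g\equiv 0$ on $[\delta,\infty)$, the element $p=g(|\overline a|)$ is a genuine projection in $B$ (indeed $g^2=g$ on the spectrum). Its image under $B\to\C$ is $g(q(|a|))=g(1)=0$ since $1\ge\delta$; hence $p\in A/I$. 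As $A/I$ is projectionless, $p=0$, which means $0\notin \mathrm{sp}(|\overline a|)$, so $|\overline a|$ is invertible in $B$. Then $\overline a$ is left invertible, since $v=\overline a\,|\overline a|^{-1}$ is an isometry; because $B$ is finite, every isometry in $B$ is a unitary, so $v$ is unitary and $\overline a=v|\overline a|$ is a product of invertibles, hence invertible. This is precisely the image of $a$ in $\widetilde A/I$.

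The main point to get right is the interplay of the two hypotheses: the projectionless-quotient assumption on $A$ is what kills the spectral projection at $0$, forcing the modulus $|\overline a|$ to be invertible, while the finiteness assumption on quotients of $\widetilde A$ is what converts the resulting left-invertibility into genuine invertibility. The only mildly technical step is verifying $I\subseteq A$, so that the exact sequence really has the projectionless algebra $A/I$ as its ideal; everything else is routine functional calculus transported across the quotient map.
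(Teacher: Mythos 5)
Your proof is correct and is essentially the paper's argument rearranged: the paper runs the same two ingredients by contradiction (finiteness first, to pass from non-invertibility of $a$ to non-invertibility of $|a|$, then the spectral gap produces a nonzero projection $1-p$ in $A/I$), whereas you argue directly, killing the spectral projection at $0$ via projectionlessness and invoking finiteness at the end to turn the isometry $\overline{a}\,|\overline{a}|^{-1}$ into a unitary. The supporting details (the verification that $I\subseteq A$, the gap $\mathrm{sp}(|\overline{a}|)\subseteq\{0\}\cup[\delta,\|a\|]$, and the use of the character to place the projection in $A/I$) all match the paper's proof in substance.
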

\begin{proof}
Since $a-1\in A$ we have $|a|-1\in A$. Therefore, $$f(|a|)\in C^*(|a|-1)\subset A.$$ 
It follows that $I=\I(f(|a|))\subseteq A$. 

Suppose that the image $[a]_I$ of $a$ in the quotient $\widetilde A/I$ is not invertible. Then $[|a|]_I$ is not invertible by the finiteness of $\widetilde A/I$. Since $I=\I(f(|a|))$ we have $f([|a|]_I)=[f(|a|)]_I=0$. Hence, the element $[|a|]_I$ has a gap in its spectrum (here we are using that $\delta<\|a\|$). Now using functional calculus on the element $a$  we can find a nonzero projection $p\in \widetilde A/I$. Note that $p<1_{\widetilde A/I}$ since $0\in \mathrm{sp}([|a|]_I)$. Also,  since $p\in \widetilde A /I$ there are $q\in A/I$ and $\lambda\in \C$ such that $p=\lambda 1_{\widetilde A/I}-q$. This implies using that $p$ is a projection that $\lambda=1$ and $q$ is a projection in $A/I$. But this is impossible since $A/I$ is projectionless and $p<1$.
\end{proof}

\begin{theorem}\label{th: stable rank}
Let $A$ be the inductive limit of a system of RSH$_0$-algebras with no dimension growth. Assume that $A$ has a finite number of ideals. Then the stable rank of $A$ is one if and only if for every ideal $I$ of $A$ the index map $\delta\colon \mathrm{K}_1(A/I)\to \mathrm{K}_0(I)$ is zero.
\end{theorem}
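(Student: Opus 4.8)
The plan is to begin by replacing the given system with the cleaner one provided by Proposition \ref{prop: decomposition}: write $A=\varinjlim(B_n,\psi_{n,m})$ with each $B_n$ an RSH$_0$-algebra of compact spectrum equipped with a standard decomposition, the connecting maps $\psi_{n,m}$ injective, $\check\psi_{n,n+1}(B_n)=B_{n+1}$, and still no dimension growth. This puts the element-wise estimates of Lemma \ref{lem: rank} and the polar-decomposition estimate of Proposition \ref{prop: polar} at my disposal at every finite stage. For the direction ``$\mathrm{sr}(A)=1\Rightarrow\delta=0$'' I would argue purely K-theoretically: stable rank one passes to $\widetilde A$ and to every amplification $\M_n(\widetilde A)$, and in a C*-algebra of stable rank one the quotient map $\mathrm{GL}(\M_n(\widetilde A))\to\mathrm{GL}(\M_n(\widetilde A)/\M_n(I))$ is surjective (lift an invertible $c$ arbitrarily, perturb the lift to an invertible $b$ by density, then $\pi(b)^{-1}c$ is close to $1$ and hence itself liftable to an invertible, so $c$ is the image of an invertible). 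Therefore $\mathrm{K}_1(A)\to\mathrm{K}_1(A/I)$ is surjective for every ideal $I$, and exactness of the six-term sequence forces the index map $\delta\colon\mathrm{K}_1(A/I)\to\mathrm{K}_0(I)$ to vanish.

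For the converse I would induct on the (finite) number of ideals of $A$. In the base case $A$ is simple; then for $a\in\widetilde A_k$ and $f_\epsilon$ as in Lemma \ref{lem: rank}, the element $f_\epsilon(|a|)$ is either zero, in which case $|a|$ is bounded below and $a$ is easily approximated by invertibles, or it is nonzero and, by simplicity together with Lemma \ref{lem: idealgen}, generates $B_j$ at all large stages $j$; since no dimension growth forces the matrix sizes to tend to infinity, $\mathrm{rank}(\mathrm{ev}_x(\psi_{k,j}(f_\epsilon(|a|))))$ grows without bound over $X_j$, so the rank hypothesis \eqref{eq: rank} holds and Lemma \ref{lem: rank} yields a unitary $u$ with $\psi_{k,\infty}(a)\approx u|\psi_{k,\infty}(a)|$, whence $u(|\psi_{k,\infty}(a)|+\epsilon)$ is an invertible approximant. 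Here the index-map hypothesis is vacuous, matching the known simple unital case.

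For the inductive step I would choose a minimal nonzero ideal $I$, which is automatically simple (an ideal of an ideal of a C*-algebra is an ideal). Both $I$ and $A/I$ are inductive limits of RSH$_0$-algebras with no dimension growth by Proposition \ref{prop: permanence}, the ideal lattice of $A/I$ is the interval above $I$ so $A/I$ has strictly fewer ideals, and by naturality of the six-term sequence the index maps of $A/I$ are among those of $A$ and hence still vanish; thus $\mathrm{sr}(A/I)=1$ by induction, while $\mathrm{sr}(I)=1$ by the base case. It then remains to promote stable rank one through the extension $0\to I\to A\to A/I\to 0$ under the hypothesis $\delta=0$. Given $a\in\widetilde A_k$ and $\epsilon>0$, I would set $I'=\I(f_\epsilon(|a|))$ and split into two cases according to the rank behaviour of $f_\epsilon(|a|)$: if the ranks grow on the whole total space, Lemma \ref{lem: rank} applies directly as above; if they stay bounded on some quotient, the image of $a$ in $\widetilde A/I'$ becomes invertible and defines a class in $\mathrm{K}_1(A/I')$ whose obstruction to lifting to an invertible approximant in $\widetilde A$ is exactly $\delta$ of that class, which is zero, allowing a correction inside $I'$ that restores invertibility.

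The main obstacle I anticipate is precisely the coordination of this dichotomy with the ideal lattice: I must ensure that when the rank of $f_\epsilon(|a|)$ fails to grow, the resulting projection-type obstruction genuinely lives in $\mathrm{K}_0$ of the relevant ideal and is annihilated by the vanishing index map, and then that the K-theoretic correction can be realized as an honest small perturbation of $a$ rather than only a formal K-theory statement. Making the polar-decomposition estimate of Proposition \ref{prop: polar}, applied over the ideal, compatible with the invertibility already achieved in the quotient, so that the two approximations glue along the extension into a single invertible in $\widetilde A$ within $5\epsilon$ of $\psi_{k,\infty}(a)$, is the delicate technical heart of the argument, and is where the finiteness of the ideal lattice is essential to keep the induction finite and the correction uniform.
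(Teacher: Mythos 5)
Your overall architecture (clean up the system via Proposition \ref{prop: decomposition}, prove the simple case by the rank/polar-decomposition machinery, then induct on the finite ideal lattice through extensions) matches the paper's, and your K-theoretic argument for the forward direction is fine. But the base case contains a genuine gap at its central step. You claim that because the matrix sizes of the $B_j$ tend to infinity, the ranks $\mathrm{rank}\bigl(\mathrm{ev}_x(\psi_{k,j}(f_\epsilon(|a|)))\bigr)$ grow without bound uniformly over $X_j$. This is a non sequitur: fullness of $\psi_{k,j}(f_\epsilon(|a|))$ in $B_j$ (which is what simplicity and Lemma \ref{lem: idealgen} give you) only yields rank at least $1$ at every point, and large matrix sizes do not force the rank of the image of a fixed element to grow --- in $\mathrm{C}(X,\M_n)$ the element $\mathrm{diag}(g,0,\dots,0)$ with $g>0$ everywhere is full and has pointwise rank $1$ for arbitrarily large $n$. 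The paper closes exactly this hole with a separate argument: for each $N$ one finds $N$ mutually orthogonal nonzero positive elements in $\Her(\phi_{n,m}(f(|a|)))$ at some finite stage (if this were impossible, \cite[Lemma 1.6]{PhillipsIRSH} would force $\Her(f(|b|))$ to have dimension at most $(N-1)^2$, hence to contain a projection, contradicting projectionlessness); each of these elements is full by simplicity, so by Lemma \ref{lem: idealgen} each is nonvanishing at every point of the total space at late stages, and orthogonality then gives rank at least $N$ everywhere. Note that this mechanism genuinely uses that $A$ is projectionless; your base case also silently ignores the other half of the dichotomy, namely a simple limit containing a nonzero projection (e.g.\ $A\cong\mathcal{K}$), for which the paper does not use Lemma \ref{lem: rank} at all but instead passes to the corner $pAp$, invokes \cite[Theorem 3.6]{PhillipsIRSH}, and uses stable isomorphism via \cite[Theorem 3.6]{Rieffel}. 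Relatedly, your claim that $f_\epsilon(|a|)=0$ makes $a$ ``easily approximated by invertibles'' because $|a|$ is bounded below is false in general (the unilateral shift satisfies $|a|=1$); one needs finiteness and projectionlessness, i.e.\ Lemma \ref{lem: invertible}, to convert the spectral gap into invertibility.

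The second gap is in the inductive step. The statement you reduce to --- if $\mathrm{sr}(I)=\mathrm{sr}(A/I)=1$ and the index map $\mathrm{K}_1(A/I)\to\mathrm{K}_0(I)$ vanishes, then $\mathrm{sr}(A)=1$ --- is precisely \cite[Lemma 3]{Nistor}, which is what the paper applies inductively, and which also delivers the forward implication since it is an equivalence. You neither cite this result nor prove it: your sketch (``a class in $\mathrm{K}_1(A/I')$ whose obstruction to lifting is $\delta$ of that class \dots allowing a correction inside $I'$'') is the idea behind that lemma, but turning a vanishing $\mathrm{K}$-theory obstruction into an honest norm-small perturbation by an invertible is exactly the content one must prove, and you yourself flag it as the unresolved ``technical heart.'' So as written the proposal is incomplete at both the base case and the gluing step; the first requires the orthogonal-elements/hereditary-subalgebra argument (plus the projection-case branch), and the second is closed in the paper simply by quoting Nistor's lemma, with the reduction to it (passing the no-dimension-growth structure and the vanishing of index maps to $I$ and $A/I$, which you do correctly by naturality) being the easy part.
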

\begin{proof}
First, let us assume that $A$ is an infinite dimensional simple C*-algebra of the form $A=\varinjlim (A_n, \phi_{n,m})$, where $(A_n, \phi_{n,m})_{n,m}$ is a system of RSH$_0$-algebras with no dimension growth.  Since $A$ is simple it has compact spectrum; thus, by Proposition \ref{prop: decomposition} we may assume that the C*-algebras $A_n$ have compact spectrum, that the maps $\phi_{n,m}\colon A_n\to A_m$ are injective, and that $\I(\phi_{n,m}(A_n))=A_m$ for all $m\ge n$.

Suppose that there is a non-zero projection $p\in A$. Since every projection in the inductive limit C*-algebra is the image of a projection from  a finite stage algebra there are $N>0$ and projections $p_n\in A_n$, with $n>N$, such that $\phi_{n,m}(p_n)=p_m$ and $\phi_{n,\infty}(p_n)=p$. The C*-algebras $p_nA_np_n$ are RSH-algebras and $pAp=\varinjlim p_nA_np_n$ (the first statement follows by (iv) of Proposition \ref{prop: permanence}).  Hence, by \cite[Theorem 3.6]{PhillipsIRSH} the stable rank of $pAp$ is one. Since $A$ is simple it is stable isomorphic to $pAp$, thus $\mathrm{tsr}(A)=1$ by \cite[Theorem 3.6]{Rieffel}.

Now suppose that $A$ is projectionless. Then the C*-algebras $A_n$ are projectionless since the maps $\phi_{n,m}$ are injective. Let us consider the unitization $\widetilde A$ of $A$. Then $\widetilde A=\varinjlim (\widetilde A_n,\widetilde \phi_n)$. In order to show that $\mathrm{sr}(A)=1$ it is enough to prove that for every $n\ge 1$ and every element $a\in \widetilde A_n$, with $a-1\in A_n$, the element $\widetilde \phi_{n,\infty}(a)$ can be approximated by invertibles in $\widetilde A$. This is clear since the elements of the form $\lambda\widetilde \phi_{n,\infty}(a)$, with $a$ as above and $0\neq \lambda\in \C$, are dense in $\widetilde A$. 

Let $a\in \widetilde A_n$, for some $n$, be such that $a-1\in A_n$. Set $b=\widetilde \phi_{n,\infty}(a)$. If $b$ is invertible then the claim holds, so let us assume that $b$ is not invertible.  Let $0<\epsilon<\min(1,\|b\|)$ and let $f\in \mathrm{C}_0(0,\infty)$ be such that $f(x)>0$ for $x\in (0,\epsilon)$ and $f(x)=0$ for $x\in (\epsilon, \infty)$. Consider the element $f(|b|)$. By functional calculus $f(|b|)\in C^*(|b|-1)\subset A$. In addition, the element $f(|b|)$ is non-zero (if $f(|b|)=0$ then by Lemma \ref{lem: invertible} applied to $I=\{0\}$ we would get that $b$ is invertible). Therefore, $\I(f(|b|))=A$. 

Let $N\in \N$. Choose $m>n$ such that there are mutually orthogonal nonzero positive elements $a_1, a_2, \cdots, a_N\in \Her(\phi_{n,m}(f(|a|)))$. (If this is not possible, then \cite[Lemma 1.6]{PhillipsIRSH} implies that 
$$\dim(\Her(\phi_{n,m}(f(|a|))))\le (N-1)^2,$$ 
for all $m\ge n$. So, $\dim (\Her(f(|b|))\le (N-1)^2$. This implies that $\Her(f(|b|))$ has a projection, which is not possible since $A$ is projectionless.) Since  $a_1, a_2, \cdots, a_N$ are nonzero and the maps $\phi_{n,m}$ are injective  $\I(\phi_{n,\infty}(a_i))=A$ for all $i$. Hence, by Lemma \ref{lem: idealgen} there is $k\ge m$ such that $\I(\phi_{m, k}(a_i))=A_k$ for all $i$. In particular, this implies that $\phi_{m,k}(a_i)(x)\neq 0$ for every $x\in X_k$ and every $i$, where $X_k$ denotes the total space of $A_k$. Therefore, since the elements  $a_1, a_2, \cdots, a_N$ are mutually orthogonal and $\phi_{m,k}(a_i)\in \Her(\phi_{n,k}(f(|a|)))$ for all $i$ we have
$$\mathrm{rank} (\mathrm{ev}_x(\phi_{n,l}(f(|a|)))\ge N,$$
for all $x\in X_l$ and all $l\ge k$.  Since $N\in \N$ is arbitrary $\phi_{n,\infty}(a)$ can be approximated by invertibles in $\widetilde A$ by Lemma \ref{lem: rank}.

This ends the proof in the  case that $A$ is a simple C*-algebra. The proof of the general case follows by applying inductively \cite[Lemma 3]{Nistor}.
\end{proof}



Let $\mathcal A$ be the class of C*-algebras defined in Definition \ref{def: A}. Then $\mathcal A$ is closed under direct limits by (vi). Also, it is closed under the operation of taking pullbacks of C*-algebras (in fact, by \cite[Proposition 3.4]{Pedersen} this is true for any class of C*-algebras that satisfies (v), (iv), and the second part of (iii)). In particular, this implies that $\mathcal A$ contains the class of RSH$_0$-algebras since $\M_n(\mathrm{C}_0(X))\in \mathcal{A}$ for every $n$ and every locally compact Hausdorff space $X$, by (i) and (ii). The class $\mathcal{A}$ also contains C*-algebras with infinite projections (e.g., the Toeplitz algebra), simple C*-algebras with arbitrary stable rank (e.g., Villadsen's algebras), and even simple C*-algebras that can not be classified using the Elliott invariant (e.g., Tom's algebras). 

 
\begin{proposition}\label{thm: type I}
Every separable type I C*-algebra belongs to $\mathcal{A}$. 
\end{proposition}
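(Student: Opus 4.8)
The plan is to prove that every separable type I C*-algebra lies in the class $\mathcal{A}$ by exploiting the structure theory of type I algebras together with the closure properties of $\mathcal{A}$ recorded in Definition \ref{def: A}. The central structural fact is that every type I C*-algebra $A$ admits a \emph{composition series}, that is, an increasing (possibly transfinite) family of closed two-sided ideals $(I_\alpha)_{\alpha\le\lambda}$ with $I_0=\{0\}$, $I_\lambda=A$, taking suprema (closed unions) at limit ordinals, and such that each successive quotient $I_{\alpha+1}/I_\alpha$ has continuous trace, or at least is a CCR algebra with Hausdorff spectrum on a suitable subquotient. First I would reduce to the building blocks: for a separable type I algebra one may arrange that each subquotient $I_{\alpha+1}/I_\alpha$ is stably isomorphic to $\mathrm{C}_0(Y_\alpha)$ for a locally compact Hausdorff $Y_\alpha$ (the spectrum of the subquotient), because the type I property forces each subquotient to have Hausdorff spectrum after passing down the series, and a continuous-trace algebra over a locally compact Hausdorff base with trivial Dixmier–Douady class is locally of the form $\mathrm{C}_0(U)\otimes\K$.

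Granting that the subquotients are in $\mathcal{A}$, the argument would proceed by transfinite induction along the composition series. The base case is clear. For the successor step, suppose $I_\alpha\in\mathcal{A}$ and that $I_{\alpha+1}/I_\alpha\in\mathcal{A}$; then the short exact sequence
\begin{equation*}
0\to I_\alpha\to I_{\alpha+1}\to I_{\alpha+1}/I_\alpha\to 0
\end{equation*}
together with property (v) of Definition \ref{def: A} places $I_{\alpha+1}$ in $\mathcal{A}$. For a limit ordinal $\beta$, the algebra $I_\beta=\overline{\bigcup_{\alpha<\beta}I_\alpha}$ is a direct limit of the $I_\alpha\in\mathcal{A}$, and since $\mathcal{A}$ is closed under direct limits by property (vi), we get $I_\beta\in\mathcal{A}$. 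Running this to the top of the series gives $A=I_\lambda\in\mathcal{A}$. In the separable case the series can be taken countable (indexed by $\N$), so the limit steps are ordinary sequential inductive limits and property (vi) applies directly.

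The one technical point requiring care is handling the building blocks: a subquotient with continuous trace need not itself be of the form $\mathrm{C}_0(X)\otimes\M_n$ globally, but only locally, and its Dixmier–Douady invariant could in principle be nonzero. Here I would use property (vi), local containment: a continuous-trace algebra $B$ with spectrum $Y$ is locally Morita equivalent to $\mathrm{C}_0$ of open subsets of $Y$, so for every finite set and every $\epsilon$ one can approximate inside a hereditary subalgebra supported on a small open set $U$ over which the bundle trivializes, giving a $*$-homomorphism from $\mathrm{C}_0(U)\otimes\M_n$ whose image is close to the given elements. This shows $B$ is locally contained in the algebras built from $\mathrm{C}_0(X)$ and matrix amplifications, hence $B\in\mathcal{A}$ by (i), (ii), (iii), and (vi). The main obstacle I anticipate is precisely verifying this local-containment step cleanly—ensuring the local trivializations assemble into genuine approximating $*$-homomorphisms from algebras already known to be in $\mathcal{A}$—since once the subquotients are in $\mathcal{A}$, the composition-series induction is essentially formal given the exactness and direct-limit closure of $\mathcal{A}$.
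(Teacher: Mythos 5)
Your overall skeleton --- a countable composition series with continuous-trace quotients (Pedersen's theorem), property (v) of Definition \ref{def: A} at successor steps and property (vi) at limit steps --- is exactly the induction the paper runs, and that part is fine. The gap is in your treatment of the building blocks. Your key claim is that a continuous-trace algebra $B$ is locally contained in $\mathcal{A}$ because for every finite set and every $\epsilon$ one can approximate inside a hereditary subalgebra supported on a small open set $U$ over which the bundle trivializes. This is false, and not because of the Dixmier--Douady class: local containment in the sense of the paper is a \emph{global} approximation requirement, and sections defined over the whole spectrum cannot be approximated by sections supported in a small open set. Concretely, take $B=\mathrm{C}([0,1])\otimes \M_n$ (a trivial bundle, so your trivialization hypothesis is vacuously satisfied) and let $F$ consist of the constant section $x\mapsto 1_{\M_n}$; for every proper open $U\subset[0,1]$, every element of the ideal $\mathrm{C}_0(U)\otimes\M_n$ --- and hence of any hereditary subalgebra supported on $U$ --- vanishes at some point of $[0,1]\setminus U$ and so lies at distance at least $1$ from this section. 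Thus the $*$-homomorphisms your construction produces never have images $\epsilon$-close to globally supported elements. Your earlier reduction, that each subquotient ``may be arranged'' to be stably isomorphic to $\mathrm{C}_0(Y_\alpha)$, is likewise unjustified as stated: Pedersen's composition series only gives continuous-trace quotients, possibly with nonzero Dixmier--Douady invariant, and the flawed local-approximation step was precisely what you invoked to get around this.

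The paper's proof avoids the problem by localizing in the \emph{fiber} direction rather than over the spectrum: in a separable continuous-trace algebra the elements of uniformly bounded trace (rank at most $n$ in every irreducible representation) form a dense two-sided ideal, and for such an element $a$ the hereditary subalgebra $\overline{a^*Aa}$ is \emph{subhomogeneous} --- it sits over the whole spectrum, so global elements can be approximated by it, but its fibers have bounded dimension. Separable subhomogeneous algebras lie in $\mathcal{A}$ by Proposition \ref{thm: ASH} (they are inductive limits of RSH$_0$-algebras, which are in $\mathcal{A}$ by the pullback-closure remark preceding the proposition), and then property (vi) finishes the continuous-trace case; no bundle need be trivialized at any point. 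If you want to keep your bundle-theoretic picture, the correct repair is to refine the composition series itself rather than to approximate inside $B$: cover the spectrum of each continuous-trace subquotient by countably many Fell-trivializing open sets $U_1, U_2,\dots$ (possible by separability), insert into the series the ideals corresponding to $V_k=U_1\cup\dots\cup U_k$, and observe that each new subquotient has spectrum contained in some $U_k$, hence trivial Dixmier--Douady class, hence is stably isomorphic to a commutative C*-algebra; properties (i), (ii), (iii), and (vi) then place it in $\mathcal{A}$, and your induction goes through. As written, however, your argument conflates ``locally trivial over the spectrum'' with ``locally contained in $\mathcal{A}$,'' and these are genuinely different notions.
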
 
\begin{proof} 
By the previous remark $\mathcal{A}$ contains the class of RSH$_0$-algebras. Hence, by Theorem \ref{thm: ASH} it also contains the class of separable subhomogeneous C*-algebras. Now, let us show that $\mathcal{A}$ contains the class of separable continuous trace C*-algebras. Let $A$ be a separable continuous trace C*-algebra then the set of elements of $a\in A$ with uniformly bounded trace (i.e., there exists $n\in \N$ such that $\mathrm{rank}(\pi(a))\le n$ for all irreducible representation $\pi$) form a dense self-adjoined ideal of $A$ (not necessarily closed). Note that for such elements the hereditary algebra $\overline{a^*Aa}$ is a subhomogeneous algebra. Also, since this ideal is dense in $A$ and $A$ is separable, $A$ can be written as a direct limit of such hereditary algebras. Therefore, $A\in \mathcal{A}$.

Suppose now that $A$ is an arbitrary separable type I C*-algebra. By \cite[Theorem 6.2.11]{PedersenBook} and by the separability of $A$, $A$ has a composition series $(I_n)_{n=0}^\infty$ such that the quotients $I_{n+1}/I_n$ are continuous trace C*-algebras. Hence, by (v) of the above definition $I_n\in \mathcal{A}$ for all $n$. Since $A=\overline{\bigcup_n I_n}$ we deduce that $A\in \mathcal{A}$ since $\mathcal{A}$ is closed under direct limits. 
\end{proof}

\begin{lemma}\label{lem: path}
For every $\epsilon>0$ there is $\delta>0$ such that: if $A$ is a C*-algebra satisfying $\mathrm{U}(B)=\mathrm{U}_0(B)$ for every hereditary sub-C*-algebra $B$ of $A$, and if $a\in  \widetilde A$ is a positive element with $a-1\in A$ and $\|a\|\le M$,  and $u\in \widetilde A$ is a unitary with $u-1\in A$ satisfying
$$\|ua-a\|<\delta,$$
then there exists a path of unitaries $(u_t)_{t\in [0,1]}$ in $\widetilde A$ with $u_t-1\in A$, $u_0=u$, and $u_1=1$, such that
$$\|u_ta-a\|<\epsilon,$$
for all $t\in [0,1]$.
\end{lemma}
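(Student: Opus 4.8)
The plan is to split $u$ according to the part of $a$ that is bounded away from zero and the part where $a$ is small: on the former $u$ is forced to be close to $1$ and I connect it to $1$ by a short geodesic, while on the latter the hypothesis $\mathrm U(B)=\mathrm U_0(B)$ contracts it, all the while $a$ moves little because it is small there. Here $\delta$ will be permitted to depend on $\epsilon$ and on $M$. To set up, fix $\eta>0$ small (of order $\epsilon$, and with $\eta\le 1/2$) and choose continuous $f,g\colon[0,\infty)\to[0,1]$ with $f+g=1$, $f=0$ on $[0,\eta]$ and $f=1$ on $[2\eta,\infty)$. Put $e=f(a)$, so that $e-1\in A$ and $g(a)=1-e\in A$, and let $B=\overline{g(a)Ag(a)}$, a hereditary sub-C*-algebra of $A$ to which the hypothesis applies.

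First I would record the estimates. Since $u$ is unitary, the single inequality $\|ua-a\|<\delta$ yields all four of $\|(u-1)a\|,\|a(u-1)\|,\|(u^*-1)a\|,\|a(u^*-1)\|<\delta$. Writing $e=a\,j(a)$ with $j(t)=f(t)/t$, so that $\|j(a)\|\le 1/\eta$, these give $\|(u-1)e\|,\|e(u-1)\|<\delta/\eta$; thus $u$ is close to $1$ on the ``large'' part. On the ``small'' part, choosing an approximate unit of $B$ consisting of functions of $a$ supported in $[0,2\eta]$ shows $\|ax\|,\|xa\|\le 2\eta\|x\|$ for every $x\in B$, which is the precise sense in which $a$ is small on $B$.

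Next comes the construction of a unitary $v$ in $\widetilde B$ close to $u$. With $c=u-1\in A$, set $c''=(1-e)c(1-e)=g(a)cg(a)\in B$; the identity $c-c''=ec+ce-ece$ and the estimates above give $\|c-c''\|\le 3\delta/\eta<1$ for $\delta$ small, so $v_0=1+c''$ is invertible with $v_0-1\in B$ and $\|v_0-u\|\le 3\delta/\eta$. Its polar part $v=v_0(v_0^*v_0)^{-1/2}$ is then a unitary, and because $v_0^*v_0-1\in B$ functional calculus keeps the correction in $B$, so $v-1\in B$ and $\|v-u\|\le C\delta/\eta$ for a universal constant $C$; hence $v\in\mathrm U(B)$. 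I now concatenate two paths. Since $u^*v$ is a unitary with $u^*v-1\in A$ and $\|u^*v-1\|\le C\delta/\eta<2$, the principal logarithm produces $h=h^*\in A$ with $u\exp(ih)=v$ and $\|h\|\le C'\delta/\eta$; the geodesic $w_s=u\exp(ish)$, $s\in[0,1]$, runs from $u$ to $v$ with $w_s-1\in A$ and $\|w_s-u\|\le C'\delta/\eta$, so that $\|w_sa-a\|\le\|(u-1)a\|+\|w_s-u\|\,\|a\|<\delta+C'M\delta/\eta$. Since $v\in\mathrm U(B)=\mathrm U_0(B)$ by hypothesis, there is a path $v_t$ from $v$ to $1$ with $v_t-1\in B$, along which $\|v_ta-a\|\le 2\eta\|v_t-1\|\le 4\eta$. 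Concatenating and reparametrising over $[0,1]$ gives a path $u_t$ with $u_t-1\in A$, $u_0=u$, $u_1=1$; choosing $\eta<\epsilon/8$ and then $\delta$ small (depending on $\epsilon,M$) so that $\delta+C'M\delta/\eta<\epsilon$ makes $\|u_ta-a\|<\epsilon$ on both segments.

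The hard part will be the construction of $v$: cutting $u$ down to the hereditary subalgebra $B$ while keeping it close to $u$ and genuinely unitary. Because $A$ may be projectionless, $e=f(a)$ is only a soft projection and one cannot literally decompose $u$ along a spectral projection; the two-sided compression $g(a)cg(a)$ together with the polar decomposition is what replaces this device, and the delicate checks are that the unitarisation remains in $\widetilde B$ and obeys the bound $\|v-u\|\le C\delta/\eta$, together with the approximate-unit argument giving $\|ax\|\le 2\eta\|x\|$ on $B$. Once these are in hand, the remaining bookkeeping—balancing $\eta\sim\epsilon$ against $\delta\ll\eta$—is routine.
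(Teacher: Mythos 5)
Your proof is correct, and it reaches the conclusion by a genuinely different route than the paper. The paper's proof is soft: it notes that the universal C*-algebra of the relations $u^*u=uu^*=1$, $0\le a\le M\cdot 1$, $ua=a$ is the algebra of continuous functions on a circle with an interval attached, invokes weak semiprojectivity of that algebra to obtain weak stability of these relations, and so replaces $(u,a)$ by a nearby pair $(u',a')$ with $u'a'=a'$ exactly; then $x=u'-1$ is normal and annihilates $a'$, so $u'$ is a unitary of $(\overline{x^*Ax})\widetilde{\,\,}$, the hypothesis applied to the hereditary subalgebra $\overline{x^*Ax}$ contracts $u'$ along a path that fixes $a'$ on the nose, and finally $u$ is joined to $u'$ through the polar parts of the segment $tu+(1-t)u'$. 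You replace the semiprojectivity black box by explicit functional calculus: your cut-down $B=\overline{g(a)Ag(a)}$ plays the role of the paper's $\overline{x^*Ax}$, your compression-plus-polar-decomposition $v$ plays the role of $u'$, and the price is that along the contracting half of the path the positive element is only approximately fixed ($\|v_ta-a\|\le 4\eta$) rather than exactly, which is why you need the auxiliary parameter $\eta\sim\epsilon$ that has no counterpart in the paper. In exchange you get a self-contained argument with explicit universal constants, and the paper's normalization fuss (arranging $u'-1\in A$ and rescaling $a'$ by its scalar part) disappears, since your construction never leaves $A$. Both proofs produce $\delta$ depending only on $(\epsilon,M)$, as the statement implicitly requires. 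The two points you flag as delicate are indeed the only ones needing care in a full write-up, and both go through: functional calculus applied to $v_0^*v_0=1+b$ with $b\in B$ keeps $(v_0^*v_0)^{-1/2}-1$ in $B$, and the path furnished by $\mathrm{U}(B)=\mathrm{U}_0(B)$ can be normalized by dividing by its scalar part so that $v_t-1\in B$ --- the paper makes the same normalization remark for its path in $\overline{x^*Ax}$.
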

\begin{proof}
The universal C*-algebra generated by elements $a$ and $u$ satisfying the relations:
\begin{align}\label{eq: relations}
u^*u=uu^*=1,\quad 0\le a\le M\cdot 1,\quad ua=a,
\end{align}
is isomorphic to the algebra of complex-value continuous functions over the graph consisting of a circle with an interval attached to it at a point. Since this C*-algebra is weakly semiprojective the relations mention above are weakly stable. 

Let $0<\epsilon<1$ and let $\epsilon'=\min(\frac{\epsilon}{2},\frac{\epsilon}{3M+1})$. Then by the weak stability of the relations \eqref{eq: relations} there exists $0<\delta<\epsilon'$ such that: if $a\in \widetilde A$ is a positive element and $u\in \mathrm{U}_0(A)$ satisfy $\|ua-a\|<\delta$ then there are $a'\in \widetilde A$, with $0\le a'\le M\cdot 1$, and $u'\in \mathrm{U}_0(\widetilde A)$ such that 
\begin{align*}
u'a'=a', \quad \|u-u'\|<\epsilon',\quad \|a-a'\|<\epsilon'.
\end{align*}
Let $\pi\colon \widetilde A\to \C$ be the quotient map. Then we have
$$\pi(u')\pi(a')-\pi(a')=0,\quad |\pi(a')-1|<\epsilon'<\frac 1 2,\quad |\pi(u')-1|<\epsilon'<\frac 1 2.$$
It follows that $\pi(u')=1$. In other words $u'-1\in A$ and 
$$\left\|a-\frac{a'}{\pi(a')}\right\|\le \|a-a'\|+\left\|\frac{a'}{\pi(a')}(\pi(a')-1)\right\|<\epsilon'+2M\epsilon'=(2M+1)\epsilon'.$$
If we replace $a'$ by $\frac{a'}{\pi(a')}$ then we get a positive element $a'\in \widetilde A$ and a unitary $u'\in \widetilde A$ in the connected component of the identity such that
\begin{align}\label{eq: ua}
\begin{aligned}
& a'-1, u'-1\in A, \quad u'a'=a', \\
& \|u-u'\|<\epsilon',\quad \|a-a'\|<(2M+1)\epsilon'.
\end{aligned}
\end{align}
Write $u'=x+1$, with $x\in A$. Then $xa'=0$. Also, since $x$ is a normal element $x\in \overline{x^*Ax}$. In particular, $u'$ is unitary of the sub-C*-algebra generated by $\overline{x^*Ax}$ and the unit of $A$. Note that this C*-algebra can be identified with $(\overline{x^*Ax})\widetilde{\,\,}$. Hence, by hypothesis there is a path of unitaries $(v_t)_{t\in [0,1]}$ in $(\overline{x^*Ax})\widetilde{\,\,}$ such that $v_0=1$ and $v_1=u'$. Moreover, this path of unitaries may be taken such that $v_t-1\in \overline{x^*Ax}$ for all $t$. Note that with this choice $(v_t)_{t\in [0,1]}$ satisfies $v_ta'-a'=0$. Now it follows that
\begin{align}\label{eq: v}
\|v_ta-a\|\le \|v_t(a-a')\|+\|a'-a\|<2\epsilon'\le \epsilon.
\end{align}

Let $z_t=tu+(1-t)u'$, with $t\in [0,1]$. Then by \eqref{eq: ua}
$$\|u-z_t\|=\|(1-t)(u-u')\|<\epsilon'<\frac 1 2.$$
Hence $z_t$ is invertible for all $t$ and by \cite[Lemma 1]{Ciuperca-Elliott-Santiago} we have
$$\|1-|z_t|\|\le \|1-z_t^*z_t\|^{\frac 1 2}\le (\|(u-z_t)^*u\|+\|z_t^*(u-z_t)\|)^{\frac 1 2}<(\epsilon'+\epsilon')^{\frac 1 2}<2\epsilon'.$$
Since $z_t$ is invertible it has a polar decomposition $z_t=w_t|z_t|$ with $w_t$ a unitary in $\widetilde A$. Note that the path of unitaries $(w_t)_{t\in [0,1]}$ satisfies $w_0=u'$, $w_1=u$, and $w_t-1\in A$.  Also, by the previous inequalities we have
$$ \|u-w_t\|\le \|u-w_t|z_t|\|+\|w_t(|z_t|-1)\|<\epsilon'+2\epsilon'=3\epsilon'.$$
This implies that
\begin{align}\label{eq: w}
\|w_ta-a\|=\|(v_t-u)a\|+\|ua-a\|<3M\epsilon'+\delta\le (3M+1)\epsilon'\le \epsilon.
\end{align}
The proof of the lemma now follows by taking
\begin{align*}
u_t=
\begin{cases}
v_{2t},\quad \text{if}\quad t\in [0,\frac 1 2],\\
w_{2t-1},\quad \text{if}\quad t\in [\frac 1 2, 1].
\end{cases}
\end{align*}
Indeed, $u_0=1$, $u_1=u$, $u_t-1\in A$, and by \eqref{eq: v} and \eqref{eq: w} $\|u_ta-a\|<\epsilon$ for all $t\in [0,1]$.
\end{proof}

Let $X$ be a compact Hausdorff space an let $V$ be an open subset of $X$. We denote by $\partial V$, $\overline{V}$, and $X\setminus V$ the boundary, the closure, and the complement in $X$ of the set $V$. Let $B$ be a C*-algebra. In the following we use the natural identifications:
\begin{align*}
&\mathrm{C}(X, B)\cong \mathrm{C}(X)\otimes B,\\
&\mathrm{C}(X,B)/\mathrm{C}_0(V,B)\cong \mathrm{C}(X\!\setminus\! V, B),\\
&\mathrm{C}(X,B)\widetilde{\,\,}/\mathrm{C}_0(V,B)\cong \mathrm{C}(X\!\setminus\! V, B)\widetilde{\,\,}.
\end{align*}
Also, if $B$ is non-unital we identify $\mathrm C(X, B)\widetilde{\,\,}$ with the subalgebra of $\mathrm C(X, \widetilde B)$ generated by $\mathrm C(X, B)$ and the unit of $\mathrm C(X, \widetilde B)$. In this way, the image of an element $a\in \mathrm C(X, B)\widetilde{\,\,}$ in the quotient by the ideal $\mathrm C(V, B)$ may be identified with the restriction $a_{X\setminus V}$ of $a$ to the closed set $X\!\setminus\! V$.

\begin{lemma}\label{lem: appinvertibles}
Let $B$ be a C*-algebra and let $X=[0,1]^k$, for some positive integer $k$. Let $a$ be an element of $\mathrm{C}(X, B)\widetilde{\,\,}$ such that $a-1\in \mathrm{C}(X, B)$. Suppose that for every $\epsilon>0$ there are open subsets $U, V, W\subseteq X$, with
$\overline U\subset V\subset \overline V\subset W$,
such that the stable rank of $\mathrm{C}(\partial V, B)$ is one, every ideal of $\mathrm{C}(\partial V, B)$ has trivial $\mathrm{K}_1$-group, $a_{X\setminus U}$ is in the closure of the invertible elements of  $\mathrm{C}(X\setminus U, B)\widetilde{\,\,}$, and there is a unitary $u\in \mathrm{C}(\overline{W}, B)\widetilde{\,\,}$ such that
$$\|a_{\overline W}-u|a_{\overline{W}}|\|<\epsilon.$$
Then $a$ is in the closure of the invertible elements of $\mathrm{C}(X, B)\widetilde{\,\,}$.
\end{lemma}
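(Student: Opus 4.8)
The plan is to fix $\epsilon>0$, invoke the hypothesis to obtain open sets $U,V,W$ and the unitary $u$, and then manufacture a single invertible element of $\mathrm{C}(X,B)\widetilde{\,\,}$ that is $O(\sqrt\epsilon)$-close to $a$; since $\epsilon$ is arbitrary this gives the conclusion. The geometric backbone is the decomposition of the compact (hence normal) space $X$ into the two closed sets $\overline V$ and $X\setminus V$, whose intersection is $\partial V$. A continuous $B$-valued function on $X$ is the same as a compatible pair on these two pieces, so $\mathrm{C}(X,B)\cong\mathrm{C}(\overline V,B)\oplus_{\mathrm{C}(\partial V,B)}\mathrm{C}(X\setminus V,B)$ with the restriction maps to $\partial V$ (the map from $\mathrm{C}(\overline V,B)$ being surjective by Tietze). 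Unitizing by Proposition \ref{prop: unitization} gives
$$\mathrm{C}(X,B)\widetilde{\,\,}\cong \mathrm{C}(\overline V,B)\widetilde{\,\,}\oplus_{\mathrm{C}(\partial V,B)\widetilde{\,\,}}\mathrm{C}(X\setminus V,B)\widetilde{\,\,},$$
so by Proposition \ref{prop: Pedersen} it suffices to produce invertibles $b_1\in \mathrm{C}(\overline V,B)\widetilde{\,\,}$ and $c\in \mathrm{C}(X\setminus V,B)\widetilde{\,\,}$, each $O(\sqrt\epsilon)$-close to the corresponding restriction of $a$, with $b_1|_{\partial V}=c|_{\partial V}$.

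First I would build the two local invertibles. On $\overline V\subseteq \overline W$ the polar hypothesis applies; writing $g(t)=\max(t,\eta)$ for a small $\eta>0$ and using $|a|-1\in\mathrm{C}(X,B)$, the element $b_1:=u\,g(|a|)$ restricted to $\overline V$ is invertible with $\|b_1-a\|_{\overline V}\le \eta+\epsilon$. On $X\setminus V\subseteq X\setminus U$ the hypothesis that $a_{X\setminus U}$ lies in the closure of the invertibles yields, after restriction, an invertible $b_2\in\mathrm{C}(X\setminus V,B)\widetilde{\,\,}$ with $\|b_2-a\|_{X\setminus V}<\epsilon$. These two invertibles do not agree on $\partial V$, and this is where the work lies.

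The hard part is the gluing on $\partial V$: since $a$ may be nearly singular there, $\|b_2^{-1}\|$ is uncontrolled, so the boundary values $b_1|_{\partial V}$ and $b_2|_{\partial V}$ cannot be joined by a short norm-path. To get around this I would pass to $z:=u^{-1}(b_2|_{\partial V})$, an invertible that is $2\epsilon$-close to the positive element $h:=|a|\big|_{\partial V}$ (because $\partial V\subseteq\overline W$); since $u$ is unitary one has $|z|=|b_2|_{\partial V}|$, and the polar unitary $\omega=z|z|^{-1}$ satisfies $\|\omega h-h\|\le C\sqrt{\epsilon}$ by the square-root estimate of \cite[Lemma 1]{Ciuperca-Elliott-Santiago} (after a harmless scalar correction making $\omega-1\in\mathrm{C}(\partial V,B)$). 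Now the two hypotheses on $\partial V$ enter decisively: stable rank one of $\mathrm{C}(\partial V,B)$ is inherited by hereditary subalgebras, and triviality of $\mathrm{K}_1$ of every ideal transfers through Morita invariance to every hereditary subalgebra, so $\mathrm{U}(E')=\mathrm{U}_0(E')$ for every hereditary sub-C*-algebra $E'$ of $\mathrm{C}(\partial V,B)$ — exactly the hypothesis of Lemma \ref{lem: path}. Applying Lemma \ref{lem: path} to $h$ and $\omega$ produces a path of unitaries $(\omega_t)$ with $\omega_0=\omega$, $\omega_1=1$, $\omega_t-1\in\mathrm{C}(\partial V,B)$, and $\|\omega_t h-h\|<\epsilon$ throughout. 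Writing $s:=b_2|_{\partial V}$ and setting, as elements of $\mathrm{C}(\partial V,B)\widetilde{\,\,}$ (all entries restricted to $\partial V$),
$$\gamma_t=u\,\omega_t\big[(1-t)\,|s|+t\,g(|a|)\big],$$
gives a path of invertibles from $\gamma_0=u\,\omega\,|s|=s=b_2|_{\partial V}$ to $\gamma_1=\big(u\,g(|a|)\big)|_{\partial V}=b_1|_{\partial V}$. The bound $\|\omega_t h-h\|<\epsilon$, together with $\|\,|s|-h\,\|=O(\sqrt\epsilon)$ and $\|g(|a|)-|a|\|\le\eta$, keeps every $\gamma_t$ within $O(\sqrt\epsilon)+\eta$ of $a|_{\partial V}$.

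Finally I would extend this boundary path into $X\setminus V$. Choosing a Urysohn function $\mu\colon X\setminus V\to[0,1]$ equal to $1$ on $\partial V$ and to $0$ off a thin neighborhood $N\subseteq W\setminus V$ of $\partial V$, and a continuous (Tietze) extension of $(y,t)\mapsto\gamma_t(y)$ off $\partial V$, I would set $c(x)$ to be that extension evaluated at $(x,\mu(x))$. Thinness of $N$, uniform continuity of $a$, and the uniform invertibility of the $\gamma_t$ over the compact set $\partial V\times[0,1]$ make $c$ invertible and $O(\sqrt\epsilon)$-close to $a$ on $X\setminus V$, while $c=b_2$ off $N$ and $c|_{\partial V}=\gamma_1=b_1|_{\partial V}$. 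Then $(b_1,c)$ is the required matching pair, hence an invertible of $\mathrm{C}(X,B)\widetilde{\,\,}$ within $O(\sqrt\epsilon)+\eta$ of $a$. Fixing first the tolerance in Lemma \ref{lem: path}, then $\eta$, then $\epsilon$ small drives this distance to $0$. The single genuine obstacle is the $\partial V$-gluing, where the blow-up of $b_2^{-1}$ forces one to replace a short norm-path by the $|a|$-annihilating path supplied by Lemma \ref{lem: path} (and hence by the stable-rank-one and trivial-$\mathrm{K}_1$ hypotheses); the collar extension and the ordering of the small parameters are routine.
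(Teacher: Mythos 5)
Your argument is correct and rests on the same two pillars as the paper's proof: the decomposition of $X$ along $\partial V$ into an inner region (where the polar hypothesis on $\overline W$ applies) and an outer region (where $a$ is a limit of invertibles), and, crucially, Lemma \ref{lem: path} applied in $\mathrm{C}(\partial V,B)$, justified exactly as in the paper (stable rank one passes to hereditary subalgebras, trivial $\mathrm{K}_1$ of ideals passes to hereditary subalgebras by Morita invariance, so \cite[Theorem 2.10]{Rieffel} gives connected unitary groups). Where you diverge is in the mechanics of the gluing. The paper never manufactures invertibles on the two pieces: it produces unitaries $u_0$ on $\overline W$ and $u_1$ on $X\setminus U$ with $\|a-u_i|a|\|$ small (the second extracted from the closure-of-invertibles hypothesis by the same polar/square-root estimate you use for $\omega$), connects $(u_0)_{\partial V}$ to $(u_1)_{\partial V}$ by the path from Lemma \ref{lem: path}, lifts that path approximately to a two-sided collar $F_n=\{x:\ d(x,\partial V)\le 1/n\}$ using $\mathrm{C}(\partial V,B)\widetilde{\,\,}=\varinjlim \mathrm{C}(F_n,B)\widetilde{\,\,}$, and splices $u_0$, the lifted path, and $u_1$ into a single global unitary $w$ with $\|a-w|a|\|<\epsilon$; invertibility of an approximant appears only at the very last step, since $|a|$ is a norm limit of invertibles. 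You instead glue invertibles directly through the pullback $\mathrm{C}(X,B)\cong\mathrm{C}(\overline V,B)\oplus_{\mathrm{C}(\partial V,B)}\mathrm{C}(X\setminus V,B)$, keep invertibility alive by hand (via $g(|a|)=\max(|a|,\eta)$ and convex combinations of positive invertibles), and replace the inductive-limit lifting of the unitary path by a Urysohn--Dugundji extension into a one-sided collar. Both devices work; the paper's unitary formulation spares it from tracking invertibility of the glued function (unitaries are uniformly invertible), while yours avoids the approximate-lifting-of-paths step. Your ordering of the tolerances (Lemma \ref{lem: path} first, then $\eta$, then $\epsilon$) is the right way to absorb the $\sqrt\epsilon$ loss, and is legitimate because the $\delta$ of Lemma \ref{lem: path} depends only on the target tolerance and the norm bound, not on the algebra.

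One spot in your collar construction should be stated more carefully: for your claims that $c=b_2$ off $N$ and that $c$ is invertible there, it does not suffice to extend $(y,t)\mapsto\gamma_t(y)$ from $\partial V\times[0,1]$; you must apply the Dugundji extension to the function defined on the closed set $(\partial V\times[0,1])\cup\bigl((X\setminus V)\times\{0\}\bigr)$ that equals $\gamma_t$ on the first piece and $b_2$ on the second (these agree on the overlap because $\gamma_0=b_2|_{\partial V}$), and only afterwards choose $N$ thin relative to the modulus of continuity of that extension, so that invertibility propagates from $\partial V\times[0,1]$ to $N\times[0,1]$. With that reading, which is clearly what you intend, the argument goes through.
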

\begin{proof}
Let $a\in \mathrm{C}(X, B)\widetilde{\,\,}$ with $a-1\in \mathrm{C}(X, B)$ and let $\epsilon>0$. Let us show that there is a unitary $w\in \mathrm{C}(X, B)\widetilde{\,\,}$ such that $\|a-w|a|\|<\epsilon$. (This clearly implies that $a$ is the closure of the invertible elements of $\mathrm{C}(X, B)\widetilde{\,\,}$  since $\epsilon$ is arbitrary and $|a|$ can be approximated by invertible elements.)

Choose $0<\delta<\epsilon$ satisfying the statement of Lemma \ref{lem: path} for $\frac \epsilon 2$. Choose open subsets $U, V, W\subseteq X$ satisfying the assumptions of the lemma for the number $\delta/2$. Then there are unitaries $u_0\in \mathrm{C}(\overline W, B)\widetilde{\,\,}$ and $u_1\in\mathrm{C}(X\setminus U, B)\widetilde{\,\,}$ such that
\begin{align}\label{eq: u0u1}
\|a_{\overline W}-u_0|a_{\overline W}|\|<\frac \delta 2,\quad \|a_{X\setminus U}-u_1|a_{X\setminus U}|\|<\frac \delta 2.
\end{align}
Moreover, by  a simple perturbation argument we may assume that $u_0-1\in \mathrm{C}(\overline W, B)$ and $u_1-1\in \mathrm{C}(X\setminus U, B)$. 

Now let us restrict the inequalities above to the boundary $V$. Then, it follows that
$$\|(u_0)_{\partial V}|a_{\partial V}|-(u_1)_{\partial V}|a_{\partial V}|\|<\delta.$$ 
By choice of $V$, the stable rank of $\mathrm{C}(\partial V, B)$ is one, and the $\mathrm{K}_1$-group of every ideal of $\mathrm{C}(\partial V, B)$ is trivial. Hence, every hereditary sub-C*-algebra of $\mathrm{C}(\partial V, B)$ has stable rank one and trivial $\mathrm{K}_1$-group. This implies, by \cite[Theorem 2.10]{Rieffel}, the the unitary group of  every hereditary sub-C*-algebra of $\mathrm{C}(\partial V, B)$ is connected. It follows now by the preceding inequality and Lemma \ref{lem: path}, applied to the element $|a_{\partial V}|$ and to the unitary $(u_0)_{\partial V}^*(u_1)_{\partial V}$, that there is a path of unitaries $u(t)\colon [0,1]\to \mathrm{C}(\partial V, B)\widetilde{\,\,}$, with $u(t)-1\in \mathrm{C}(\partial V, B)$, such that
$u(0)=(u_0)_{\partial V}$, $u(1)=(u_1)_{\partial V}$, and
$$\|u(t)|a_{\partial V}|-u_1|a_{\partial V}|\|<\frac \epsilon 2,$$
for all $t\in [0,1]$.
Using this inequality and the second inequality of \eqref{eq: u0u1} restricted to the boundary of $V$ we get
$$\|a_{\partial V}-u(t)|a_{\partial V}|\|<\epsilon,$$
for all $t\in [0,1]$. Let $d\colon X\to [0,\infty)$ be a metric in $X$ and let 
$$F_n=\{x\in X \mid d(x, \partial V)\le 1/n\}.$$
Then $\bigcup_nF_n=\partial V$ and $\varinjlim \mathrm{C}(F_n, B)\widetilde{\,\,}=\mathrm{C}(\partial V, B)\widetilde{\,\,}$, where the connecting *-homomorphisms are the obvious quotient maps. Since any path of unitaries in an inductive limit C*-algebra can be lifted approximately to finite stage C*-algebra there are $F_n\subseteq \overline V\cap (X\setminus U)$ and a path of unitaries $v(t)\colon [0,1]\to \mathrm{C}(F_n, B)\widetilde{\,\,}$ such that $v(0)=(u_0)_{F_n}$, $v(1)=(u_1)_{F_n}$, and 
\begin{align}\label{eq: Fn}
\|a_{F_n}-v(t)|a_{F_n}|\|<\epsilon,
\end{align}
for all $t\in [0,1]$. Moreover, since $u(t)-1\in \mathrm{C}(\partial V, B)\widetilde{\,\,}$ we may choose $v(t)$ such that $v(t)-1\in \mathrm{C}(F_n, B)$. Let $w$ be the element defined by
\begin{align*}
w(x)=
\begin{cases}
 u_0(x),\quad &\text{if }\,\,x\in V\setminus F_n,\\
u_1(x) , \quad &\text{if }\,\, x\in X\setminus (V\cup F_n),\\
 v(g(x))(x), \quad & \text{if }\,\, x\in F_n,
\end{cases}
\end{align*}
where $g\colon F_n\to [0,1]$ is given by
$$g(x)=\frac{d\left(x, \overline{V\setminus F_n}\right)}{\max\left\{d\left(x,\overline{X\setminus (V\cup F_n)}\right), d\left(x,\overline{V\setminus F_n}\right)\right\}}.$$
Note that $g(x)$ is continuous and $g(x)=0$ if $x\in F_n\cap \overline{V\setminus F_n}$, and $g(x)=1$ if $x\in F_n\cap \overline{X\setminus (V\cup F_n)}$. Hence, $w$ is a well defined and it is a unitary of $\mathrm{C}(X, B)\widetilde{\,\,}$ since $w(x)-1\in B$ for all $x\in X$. It follows now by equations \eqref{eq: u0u1} and \eqref{eq: Fn} that
$$\|a-w|a|\|<\epsilon.$$
\end{proof}

Recall that a C*-algebra $B$ is locally contained in a class of C*-algebras $\mathcal A$ if for every $\epsilon>0$ and every finite subset $F$ of $B$ there exists a C*-algebra $A\in \mathcal A$ and a *-homomorphism $\phi\colon A\to B$ such that the distance from $x$ to $\phi(A)$ is less than $\epsilon$ for every $x\in F$.
 
\begin{theorem}\label{thm: main1}
Let $A$ be the class of C*-algebras defined in Definition \ref{def: A}. Let $B$ be a simple inductive limit of a system of RSH$_0$-algebras with no dimension growth. The following statements hold:

\begin{enumerate}
\item If $B$ is projectionless and $\mathrm{K}_0(B)=\mathrm{K}_1(B)=0$ then $\mathrm{sr}(A\otimes B)=1$ for every C*-algebra $A\in \mathcal{A}$.
\item If $B$ is either projectionless or it is unital with no nonzero projection but its unit and $\mathrm{K}_1(B)=0$ then $\mathrm{sr}(A\otimes B)=1$ for every C*-algebra $A$ that is approximately contained in the class of RSH$_0$-algebras with  1-dimensional spectrum.
\end{enumerate}
\end{theorem}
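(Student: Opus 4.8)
The plan is to establish both statements by reducing them to a single base computation reachable through the dimension-reduction device of Lemma~\ref{lem: appinvertibles}, and then to transport stable rank one across the class $\mathcal A$ by a local-approximation principle. The principle I would use repeatedly is the elementary fact that if every finite subset of a C*-algebra $C$ lies arbitrarily close, after adding scalars, to a sub-C*-algebra $D\subseteq C$ with $\mathrm{sr}(D)=1$, then $\mathrm{sr}(C)=1$: writing $a=\lambda 1+a_0\in\widetilde C$ and approximating $a_0$ by $d_0\in D$, the density of invertibles in $\widetilde D\subseteq\widetilde C$ yields an invertible of $\widetilde C$ near $a$, since invertibility in a unital subalgebra is inherited by the ambient algebra. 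Combined with the permanence of stable rank one under matrix amplifications, hereditary subalgebras, quotients, and inductive limits \cite{Rieffel}, this reduces statement~(1) to checking that $\mathcal A_0:=\{A:\mathrm{sr}(A\otimes B)=1\}$ is closed under the operations of Definition~\ref{def: A}. Properties (ii)--(iv) are then immediate (for (iii) note $D\otimes B$ is hereditary in $A\otimes B$, and for (iv) that $(A/J)\otimes B=(A\otimes B)/(J\otimes B)$ by exactness of $\otimes B$); property~(vi) follows from the local-approximation principle applied to the images $\phi(C)\cong C/\ker\phi\in\mathcal A_0$; and the extension property~(v) is handled by tensoring the sequence with $B$ and invoking \cite[Lemma~3]{Nistor}, where the only hypothesis to verify, vanishing of the index map $\mathrm K_1(C\otimes B)\to\mathrm K_0(A\otimes B)$, is automatic because $\mathrm K_*(B)=0$ forces $\mathrm K_*(\,\cdot\otimes B)=0$ through the K\"unneth formula ($B$ lying in the bootstrap class). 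This collapses statement~(1) to the base case $\mathrm{sr}(\mathrm C_0(X)\otimes B)=1$.

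For the base case I would reduce, again by the local-approximation principle together with quotient-monotonicity, from an arbitrary locally compact $X$ to the cube $X=[0,1]^k$: pass to the one-point compactification (an ideal, covered by~(iii)), approximate $\mathrm C(X)$ by its finite-dimensional compact metric quotients, and use that such a quotient embeds in a cube, so that the relevant algebra is a quotient of $\mathrm C([0,1]^k)\otimes B$. I would then prove $\mathrm{sr}(\mathrm C([0,1]^k,B))=1$ by induction on $k$. The case $k=0$ is $\mathrm{sr}(B)=1$, which follows from Theorem~\ref{th: stable rank}: $B$ is simple, so the only index maps to examine are $\mathrm K_1(B)\to\mathrm K_0(0)$ and $\mathrm K_1(0)\to\mathrm K_0(B)$, both trivial. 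For the inductive step, given $a\in\mathrm C([0,1]^k,B)\widetilde{\,\,}$ and $\epsilon>0$, I would cover the cube by finitely many small subcubes and adjoin them one at a time, at each stage applying Lemma~\ref{lem: appinvertibles} with $V$ a small ball: the unitary $u$ on $\overline W$ exists because $a$ is nearly constant on a small set and $\mathrm{sr}(B)=1$ furnishes its polar part, the hypothesis on $a_{X\setminus U}$ is the previous stage of the inner induction, and the boundary $\partial V$ is $(k-1)$-dimensional, so that $\mathrm{sr}(\mathrm C(\partial V,B))=1$ by the outer inductive hypothesis.

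The crux, which I expect to be the main obstacle, is to verify throughout the induction the two hypotheses that Lemma~\ref{lem: appinvertibles} demands of the boundary algebra $\mathrm C(\partial V,B)$: stable rank one and triviality of $\mathrm K_1$ of every ideal. Stable rank one is supplied by the inductive hypothesis, but the $\mathrm K_1$-condition is exactly where the assumptions on $B$ are indispensable. In statement~(1) the vanishing of $\mathrm K_*(B)$ makes $\mathrm K_*$ of every ideal $\mathrm C_0(U)\otimes B$ vanish by K\"unneth, so the condition is free. Statement~(2) is treated by the same scheme on the base objects $R\otimes B$, with $R$ a recursive subhomogeneous algebra of $1$-dimensional spectrum, to which the problem reduces by local approximation (images of such $R$ remain RSH$_0$ of spectral dimension $\le 1$ by Proposition~\ref{prop: permanence}). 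Now every boundary $\partial V$ is $0$-dimensional, whence $\mathrm C(\partial V,B)$ is an inductive limit of finite direct sums of copies of $\mathrm M_n(B)$ --- of stable rank one --- and the ideals $\mathrm C_0(U)\otimes B$ have vanishing $\mathrm K_1$ because $\mathrm K_1(\mathrm C_0(U))=0$ and $\mathrm K_1(B)=0$ annihilate the K\"unneth contributions even when $\mathrm K_0(B)\neq0$. The delicate bookkeeping I anticipate is to run the cube-by-cube patching across the recursive pullbacks that build $R$ while keeping the unitary-group connectivity over the $0$-dimensional gluing loci under control (via Lemma~\ref{lem: path} and \cite[Theorem~2.10]{Rieffel}); this is precisely where the $1$-dimensionality of the spectrum and the hypothesis $\mathrm K_1(B)=0$ must be used in tandem, since for higher-dimensional $\partial V$ the ideals of $\mathrm C(\partial V,B)$ could acquire nontrivial $\mathrm K_1$ and the patching would fail.
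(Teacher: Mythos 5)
Your reduction of statement (1) to the commutative base case via the class $\mathcal A_0=\{A : \mathrm{sr}(A\otimes B)=1\}$ is exactly the paper's first move (its class $\mathcal D$), and the Menger--N\"obeling embedding/quotient trick is a legitimate simplification of the paper's passage from cubes to finite-dimensional compact metric spaces. The fatal problems are inside your cube induction, and they are two. First, Lemma~\ref{lem: appinvertibles} demands, \emph{for every} $\epsilon>0$, a triple $U\subset V\subset W$ together with a unitary $u$ with $\|a_{\overline W}-u|a_{\overline W}|\|<\epsilon$. Your mechanism --- near-constancy of $a$ on a mesh-sized $\overline W$ plus an approximate polar decomposition in $B$ --- only yields tolerances above the oscillation of $a$ on $\overline W$, and $W$ cannot be shrunk as $\epsilon\to 0$ because it must contain the not-yet-adjoined subcube, whose size is fixed by the mesh. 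Refining the mesh does not repair this: it enlarges the region $X\setminus U$ on which full approximability must \emph{already} be known, so the inner induction never closes; and no quantitative substitute for the lemma exists, since the $\delta$ of Lemma~\ref{lem: path} comes from weak stability of relations and carries no modulus. The paper escapes precisely this trap by letting the element choose the sets: $U,V,W$ exhaust the co-zero set of $f_\epsilon(|a'|)$, off $U$ the element is \emph{exactly} invertible by Lemma~\ref{lem: invertible} (using projectionlessness and finiteness of quotients), and the unitary on $\overline W$ at every tolerance comes from Lemma~\ref{lem: rank}, whose rank hypothesis is verified by splitting $f_\epsilon(|a'|)$ into many orthogonal nonzero pieces and pushing them up the inductive limit --- this is where simplicity, projectionlessness, and the RSH$_0$-limit structure with no dimension growth are actually consumed. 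Your step uses none of that structure beyond $\mathrm{sr}(B)=1$; if it worked, it would prove the theorem for every simple $B$ with stable rank one and vanishing K-theory, which is far beyond what is being claimed.

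Second, your outer induction hypothesis covers only cubes, but the boundary $\partial V$ of a small ball (or of a cubical region) is homeomorphic to $S^{k-1}$, which is \emph{not} a closed subspace of $[0,1]^{k-1}$, so quotient-monotonicity gives you nothing; and your only device for non-cube spaces --- embedding into a cube --- lands $S^{k-1}$ in $[0,1]^{2k-1}$, a cube of \emph{higher} dimension than the one being treated, so the argument is circular. The paper closes this loop with machinery absent from your outline: at each dimension $n$ it extends the conclusion from $[0,1]^n$ to all CW-complexes of dimension at most $n$ (via an RSH decomposition with cube base spaces, \cite[Theorem 3.9]{Pedersen}, and \cite[Proposition 4.1 (i)]{Brown-Pedersen}), and then to all compact metric spaces of dimension at most $n$ (inverse limits of CW-complexes and \cite[Theorem 5.1]{Rieffel}), and only then increments $n$; it is this enlarged hypothesis that legitimately handles $\mathrm{C}(\partial V, B)$. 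A smaller but real omission: in statement (2) you never reduce the unital case (only trivial projections) to the projectionless case; the paper does so by passing to a hereditary subalgebra $\overline{aBa}$ and invoking stable isomorphism together with \cite[Theorem 6.4]{Rieffel}.
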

\begin{proof}
(i) Let $\mathcal{D}$ be the subclass of $\mathcal{A}$ consisting of the C*-algebras $A\in \mathcal{A}$ such that $\mathrm{sr}(A\otimes B)=1$. Then by the results of \cite{Rieffel} (i.e., Theorems 6.1, 4.4, 6.4, 4.3, and the proof of Theorem 5.1) and by \cite[Lemma 3]{Nistor} it follows that (ii), (iii), (iv), (v), and (vi) of Definition \ref{def: A} holds if we replace $\mathcal{A}$ by $\mathcal{D}$. Hence, $\mathcal{D}$ agrees with $\mathcal{A}$ if and only if  $\mathrm{C}_0(X)\in \mathcal{D}$, for every locally compact Hausdorff space $X$.


Let $X$ be a locally compact Hausdorff space. Let us show that the stable rank of $\mathrm{C}_0(X, B)$ is one. First we consider the case of compact metric spaces $X$ of finite covering dimension.  The proof in this case will proceed by induction on the covering dimension of $X$.
If the covering dimension of $X$ is zero then $\mathrm{C}(X)$ is an AF-algebra. Hence, $\mathrm{sr}(\mathrm{C}(X, B))=1$ since by Theorem \ref{th: stable rank} the stable rank of $B$ is one. Now let us assume that $\mathrm{sr}(\mathrm{C}(X, B))=1$ for every compact metric space $X$ of covering dimension at most $n-1$.

Let $B=\varinjlim (B_i, \phi_{i,j})$ be as in the statement of the theorem. By Proposition \ref{prop: decomposition} we may assume that the C*-algebras $B_i$ have compact spectrum, that the *-homomorphisms $\phi_{i,j}$ are injective, and that $\I(\phi_{i,i+1}(B_i))=B_{i+1}$ for all $i$. Also, we have
$$\mathrm{C}(X, B)\widetilde{\,\,}=\varinjlim\left (\mathrm{C}(X, B_i)\widetilde{\,\,}, (\id_X\otimes \phi_{i,j})\widetilde{\,\,}\right),$$
where $\id_X:\mathrm{C}(X)\to \mathrm{C}(X)$ denotes the identity map. 
In particular, this implies that any element of $\mathrm{C}(X, B)\widetilde{\,\,}$ can be approximated by the images in $\mathrm{C}(X, B)\widetilde{\,\,}$ of elements of the C*-algebras  $\mathrm{C}(X, B_i)\widetilde{\,\,}$. In addition, any element of $\mathrm{C}(X, B_i)\widetilde{\,\,}$ can be approximated by elements of the form $\lambda (b+1)$, with $\lambda\in \C\setminus\{0\}$ and $b\in \mathrm{C}(X, B_i)$. Therefore, $\mathrm{sr}(\mathrm{C}(X,B))=1$ if for every $i\ge 1$ and for every element $a\in \mathrm{C}(X,A_i)\widetilde{\,\,}$, with $a-1\in \mathrm{C}(X, A_i)$, the element $(\id_X\otimes\phi_{n,\infty})\widetilde{\,\,} (a)$ can be approximated by invertible elements of $\mathrm{C}(X, B)\widetilde{\,\,}$.

Let $X=[0,1]^n$. Let $a\in \mathrm{C}(X, B_i)\widetilde{\,\,}$, for some $i$, be such that $a-1\in \mathrm{C}(X, B)$ and such that $(\id_X\otimes \phi_{i,\infty})\widetilde{\,\,}(a)$ is not invertible. Let $\epsilon>0$. Consider the elements
\begin{align}\label{eq: a'bb'}
\begin{aligned}
a'=(\id_X\otimes\phi_{i,\infty})\widetilde{\,\,}(a),\quad b=f(|a|),\quad b'=(\id_X\otimes\phi_{i,\infty})\widetilde{\,\,}(b),
\end{aligned}
\end{align}
where $|a|$ denotes the element $(a^*a)^{\frac 1 2}$, $0< \epsilon<\min(1,\|a\|)$, and $f\in \mathrm C_0(0,\infty)$ is such that $f(x)>0$ for $x\in (0,\epsilon)$ and $f(x)=0$ for $x\in (\epsilon, \infty)$. By functional calculus $b\in \mathrm{C}(X,B_i)$ and as a result $b'\in \mathrm{C}(X,B)$. As a consequence, the ideal generated by $b'$ has the form $\mathrm{C}_0(Y,B)$ for some open subset $Y$ of $X$ (to deduce this we are also using the simplicity of $B$). This implies by Lemma \ref{lem: invertible}, applied to the element $a'$ and to the ideal $\mathrm{C}_0(Y,B)$, that $a'_{X\setminus Y}\in \mathrm{C}(X\setminus Y)$ is invertible.  Choose an increasing sequence of open subsets $(Y_j)_{j=1}^\infty$ such that $\overline{Y_j}\subset Y_{j+1}$, $Y=\bigcup_j Y_j$, and the covering dimension of the boundary $\partial Y_j$ of $Y_j$ is at most $n-1$ for all $j$. It follows that, 
\begin{align*}
\mathrm{C}(X\setminus Y, B)\widetilde{\,\,}=\varinjlim \mathrm{C}(X\setminus Y_j, B)\widetilde{\,\,}.
\end{align*}
Hence, since $a'_{X\setminus Y}$ is invertible in $\mathrm{C}(X\setminus Y, B)\widetilde{\,\,}$ there is $j$ such that $a'_{X\setminus Y_k}$ is invertible in $\mathrm{C}(X\setminus Y_k, B)\widetilde{\,\,}$ for all $k\ge j$. Set 
\begin{align}\label{eq: UVW}
U=Y_j, \quad V=Y_{j+1},\quad W=Y_{j+2}.
\end{align} 
Then $a'_{X\setminus U}$ is invertible and $\overline U\subset V\subset \overline V\subset W\subset \overline W\subset U$.

The element $b'$ generates the ideal $\mathrm{C}_0(U, B)$. Therefore, $b'_{\overline{W}}$ generates $\mathrm{C}(\overline{W}, B)$ as an ideal.
This implies using that $\overline W$ is compact that 
$$K=\min_{x\in \overline W}\|b'_{\overline W}(x)\|\neq 0.$$
For each $1\le j\le N$ let $g_j$ be positive continuous function with support on the interval $\left(\frac{(j-1)K}{N}, \frac{jK}{N}\right)$. Set $b'_j=g_j(b'_{\overline W})$. Then $b'_j(x)\neq 0$ for every $x\in \overline W$ (Note that $b_j'(x)=0$ for some $x\in \overline W$ implies that $b_j(x)\in B$ has a gap in its spectrum. In turn, this implies that $B$ contains a nonzero projection which contradics the assumption on $B$.)  Also, the elements $(b'_i)_{i=1}^N$ are mutually orthogonal.

The closed subset $\overline W\subset [0,1]^{n}$ has covering dimension at most $n$. Hence, the C*-algebra $\mathrm{C}(\overline{W},B)$ can be written as and the inductive limit of a system of RSH$_0$-algebras with no dimension growth. Specifically,
\begin{align}\label{eq: limiteind}
\mathrm{C}(\overline{W},B)=\varinjlim (\mathrm{C}(\overline{W},B_j),\id_{\overline{W}}\otimes\phi_{j,k}).
\end{align}
Note that the C*-algebras $\mathrm{C}(\overline{W},B_j)$ have compact spectrum, the homomorphisms $\id_{\overline{W}}\otimes\phi_{j,k}$ are injective, and the image of $\id_{\overline{W}}\otimes\phi_{j,j+1}$ generates $\mathrm{C}(\overline{W},B_{j+1})$ as an ideal for all $j$.

Let $b_1,b_2,\cdots, b_N\in \mathrm{C}(\overline W, B_i)$ be the elements defined by $b_j=g_j(b_{\overline W})$. Then by the definition of $b'$ (see \eqref{eq: a'bb'}) we have 
\begin{align*}
b_j'=(\id_{\overline{W}}\otimes\phi_{i,\infty})(b_j).
\end{align*}
By Lemma \ref{lem: idealgen}, applied to the inductive limit \eqref{eq: limiteind} and to the elements $(b_j)_{j=1}^N$, there exists $k>i$ such that 
$$\I((\id_{\overline{W}}\otimes\phi_{i,l})(b_j))=\mathrm{C}(\overline W, B_l),$$  
for all $j$ and all $l\ge k$. This implies that 
$$\mathrm{ev}_{x,y}((\id_{\overline{W}}\otimes\phi_{i,l})(b_j))\neq 0,$$
for all $1\le j\le N$, $l\ge k$, $x\in \overline W$, and $y$ in the total space of $B_l$. As a consequence, since $(b_i)_{i=1}^N$ are pairwise orthogonal elements of the C*-algebra generated by $b_{\overline W}$ we get
$$\mathrm{rank}\left(\mathrm{ev}_{x,y}\left((\id_{\overline{W}}\otimes\phi_{i,l})(b_{\overline{W}})\right)\right)\ge N,$$
for all $l\ge k$, $x\in \overline{W}$, and $y$ in the total space of $B_l$.
This implies by Lemma \ref{lem: rank} that there exists a unitary
$u\in \mathrm{C}(\overline W, B)\widetilde{\,\,}$ such that
$$\left\|a'_{\overline W}-u|a'_{\overline W}|\right\|<5\epsilon.$$
We have shown that given $\epsilon>0$ there are open subsets $U, V, W\subset X$ such that
$\overline{U}\subset V\subset \overline V\subset W$, the covering dimension of $\partial V$ is at most $n-1$, $a'_{X\setminus U}$ is invertible, and $a'_{\overline W}$ satisfies the inequality above. Also, note that every ideal of $\mathrm{C}(\partial V, B)$ has trivial $\mathrm{K}_1$-group and by the induction hypothesis $\mathrm{sr}(\mathrm{C}(\partial V, B))=1$. Therefore, by Lemma \ref{lem: appinvertibles} the element $a'$ is invertible. This shows that $\mathrm{sr}(\mathrm{C}([0,1]^n, B)=1$.

Let $X$ be a CW-complex of dimension at most $n$. Then $\mathrm{C}(X)$ admits a recursive subhomogeneous decomposition with base spaces homeomorphic to cubes of dimension at most $n$. This implies by \cite[Theorem 3.9]{Pedersen} and \cite[Proposition 4.1 (i)]{Brown-Pedersen} that $\mathrm{sr}(\mathrm{C}(X,B))=1$. Now, suppose that $X$ is an arbitrary compact metric space of covering dimension at most $n$. Then, by \cite[Theorem 1.13.2]{Engelking}, $X$ is homeomorphic to an inverse limit of CW-complexes of dimension at most $n$. This implies that $\mathrm{C}(X,B)$ can be written as a direct limit of a sequence of C*-algebras of the form $\mathrm{C}(Y_i, B)$, where each space $Y_i$ is a CW-complex of dimension at most $n$. It follows now by \cite[Theorem 5.1]{Rieffel} that the stable rank of $\mathrm{C}(X,B)$ is one. This concludes the proof by induction.

By \cite[Theorem 3.3.5]{Engelking} every compact Hausdorff space $X$ is homeomorphic to an inverse limit of compact metric spaces of finite covering dimension. Hence, as above, we conclude that $\mathrm{sr}(\mathrm{C}(X,B))=1$ by \cite[Theorem 5.1]{Rieffel} (this theorem is stated for direct limits of sequences of C*-algebras but it is easy to see that it holds for arbitrary direct limits). Now, let $X$ be an arbitrary locally compact Hausdorff space. Then $\mathrm{C}_0(X,B)$ is an ideal of $\mathrm{C}(\widetilde X,B)$, where $\widetilde X$ denotes the one-point compactification of $X$. Since $\widetilde X$ is compact and Hausdorff $\mathrm{sr}(\mathrm{C}(\widetilde{X}, B))=1$. Therefore, by \cite[Theorem 4.4]{Rieffel}, $\mathrm{sr}(\mathrm{C}_0(X,B))=1$.

(ii) Let $B$ be a unital C*-algebra containing no nontrivial projection but its unit. Let $a\in B$ be a non-invertible positive element of $B$. Then the hereditary sub-C*-algebra $\overline{aBa}$ is projectionless. In addition, $B$ and $\overline{aBa}$ are stable isomorphic since $B$ is simple. Therefore, $\mathrm{K}_1(\overline{aBa})=0$ and by \cite[Theorem 6.4]{Rieffel} the stable rank of the tensor product of $B$ with a C*-algebra $A$ is one if and only if the stable rank of the tensor product of $\overline{aBa}$ with $A$ is one. This shows that it is sufficient to prove (ii) for projectionless C*-algebras.

Let $B$ be a projectionless C*-algebra as in (ii). Then by repeating the first step of the induction in the proof of (i) we conclude that the stable rank of $\mathrm{C}([0,1], B)$ is one. (Note that in the proof of (i) the assumption on the K-groups of $B$ was only used to assure that every ideal of $\mathrm{C}(\partial V, B)$ has trivial $\mathrm{K}_1$-group. This clearly holds if the $\mathrm{K}_1$-group of $B$ is trivial and $V$ is an open subset of $[0,1]$.) Also, as in the proof of (i) this implies that the stable rank of $\mathrm{C}_0(X)\otimes B$ is one for every locally compact 1-dimensional space $X$. Now, using \cite[Theorem 6.1]{Rieffel}, we get that the stable rank of $\mathrm{C}([0,1], \M_n)\otimes B$ is one for every $n$. Therefore, by \cite[Proposition 4.1 (i)]{Brown-Pedersen} the stable rank of $A\otimes B$ is one for any RSH$_0$-algebra with 1-dimensional spectrum. The case that $A$ is locally contained in the class of RSH$_0$-algebra with 1-dimensional spectrum follows using the ideas of the proof of \cite[Theorem 5.1]{Rieffel} and that the C*-algebra $A\otimes B$ is approximately contained in the class of C*-algebras that are tensor products of $B$ with RSH$_0$-algebras with one dimensional spectrum.
\end{proof}

\begin{corollary}
If $A$ is the inductive limit of separable type I C*-algebras and $B$ is as in Theorem \ref{thm: main1} then $\mathrm{sr}(A\otimes B)=1$.
\end{corollary}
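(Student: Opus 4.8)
The plan is to observe that this corollary is an immediate consequence of the machinery already assembled: Proposition \ref{thm: type I}, the closure of $\mathcal{A}$ under inductive limits recorded via property (vi) of Definition \ref{def: A}, and part (i) of Theorem \ref{thm: main1}. The only thing to verify is the membership $A \in \mathcal{A}$; once this is in hand the conclusion $\mathrm{sr}(A \otimes B) = 1$ follows at once by applying part (i) of Theorem \ref{thm: main1} (the part valid for arbitrary members of $\mathcal{A}$) to the simple projectionless C*-algebra $B$ with $\mathrm{K}_0(B) = \mathrm{K}_1(B) = 0$.

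To establish $A \in \mathcal{A}$, I would first write $A = \varinjlim (A_n, \phi_{n,m})$ with each $A_n$ a separable type I C*-algebra and let $\phi_{n,\infty} \colon A_n \to A$ denote the canonical maps. By Proposition \ref{thm: type I} each $A_n$ belongs to $\mathcal{A}$. Next I would show that $A$ is locally contained in $\mathcal{A}$ in the sense recalled just before Theorem \ref{thm: main1}: given $\epsilon > 0$ and a finite subset $F \subseteq A$, the density of $\bigcup_n \phi_{n,\infty}(A_n)$ in $A$ furnishes an index $n$ such that every $x \in F$ lies within $\epsilon$ of $\phi_{n,\infty}(A_n)$, and then the *-homomorphism $\phi_{n,\infty} \colon A_n \to A$ with $A_n \in \mathcal{A}$ witnesses local containment. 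Property (vi) of Definition \ref{def: A} now yields $A \in \mathcal{A}$.

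With $A \in \mathcal{A}$ secured, part (i) of Theorem \ref{thm: main1} applies and gives $\mathrm{sr}(A \otimes B) = 1$, which completes the proof.

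I do not expect a genuine obstacle here, since the substantive analytic work has been carried out in Theorem \ref{thm: main1} and the structural input in Proposition \ref{thm: type I}; the corollary is essentially a bookkeeping step. The one point meriting a moment's care is that the notion of being \emph{locally contained} is tailored precisely to arbitrary (not necessarily sequential) inductive limits, so the verification above goes through for any inductive system of separable type I algebras and not only for countable towers. This is exactly what makes property (vi) the correct closure axiom to invoke, and it is the reason no separability or countability hypothesis on the limit is needed in the argument.
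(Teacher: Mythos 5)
Your proposal is correct and follows exactly the route the paper intends: the corollary is stated without proof precisely because it is immediate from Proposition \ref{thm: type I} (each separable type I algebra lies in $\mathcal{A}$), the closure of $\mathcal{A}$ under inductive limits via property (vi) of Definition \ref{def: A} (which the paper records in the remark preceding Proposition \ref{thm: type I}), and part (i) of Theorem \ref{thm: main1}. Your explicit verification of local containment, including the observation that it works for arbitrary directed systems, simply spells out what the paper leaves implicit.
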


An important example of a simple stably projectionless C*-algebra with trivial K-groups is the C*-algebra $\mathcal W$ constructed in \cite{Jacelon}. This C*-algebra is an inductive limit of RSH$_0$-algebra; thus, the first part of Theorem \ref{thm: main1} applies to $\mathcal W$. In other words, the following statement hold:    
\begin{corollary}
Let $A$ be a C*-algebra in $\mathcal{A}$. Then $\mathrm{sr}(A\otimes \mathcal W)=1$.
\end{corollary}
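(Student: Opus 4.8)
The plan is to deduce this corollary directly from part (i) of Theorem~\ref{thm: main1} by verifying that $B=\mathcal W$ meets all of that theorem's hypotheses. Recall that Theorem~\ref{thm: main1}(i) asserts $\mathrm{sr}(A\otimes B)=1$ for every $A\in\mathcal A$ whenever $B$ is a simple, projectionless inductive limit of a system of RSH$_0$-algebras with no dimension growth and $\mathrm{K}_0(B)=\mathrm{K}_1(B)=0$. Thus the entire argument reduces to checking these four properties for $\mathcal W$; no new analysis is required beyond invoking the established structural facts about Jacelon's algebra.

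First I would record the properties of $\mathcal W$ coming from \cite{Jacelon}: it is simple and stably projectionless (in particular projectionless, so it contains no nonzero projection), and its $\mathrm K$-theory is trivial, $\mathrm{K}_0(\mathcal W)=\mathrm{K}_1(\mathcal W)=0$. These are exactly the simplicity, projectionlessness, and vanishing-$\mathrm K$-theory requirements of Theorem~\ref{thm: main1}(i). The only substantive remaining point is the inductive-limit presentation with no dimension growth.

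Next I would realize $\mathcal W$ as $\varinjlim W_n$, where each $W_n$ is one of the Razak building blocks used in Jacelon's construction, that is, a $1$-dimensional NCCW-complex. Such a building block admits a presentation as an iterated pullback of algebras of the form $\mathrm C_0([0,1],\M_k)$ amalgamated along point-evaluation restriction maps, which is precisely the form demanded by Definition~\ref{def: RSH0}; hence each $W_n$ is an RSH$_0$-algebra whose total space $X_n$ has covering dimension at most one. Consequently $\sup_n(\dim X_n)\le 1<\infty$, so the system $(W_n)$ has no dimension growth in the sense of Definition~\ref{def: dimension growth}. With all four hypotheses verified, Theorem~\ref{thm: main1}(i) applied to $B=\mathcal W$ yields $\mathrm{sr}(A\otimes\mathcal W)=1$ for every $A\in\mathcal A$.

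The one place where care is needed, and the only step that is not a verbatim citation, is matching Jacelon's building blocks to the RSH$_0$-framework: one must confirm that the $1$-dimensional NCCW-complexes appearing in his inductive system genuinely fit Definition~\ref{def: RSH0} and that their total-space dimensions are uniformly bounded. Once the explicit iterated-pullback description of a $1$-dimensional NCCW-complex is in hand, this is routine, since the gluing maps are restriction (point-evaluation) maps and the base spaces are intervals. I therefore expect no real obstacle; the corollary is an essentially immediate consequence of Theorem~\ref{thm: main1}(i).
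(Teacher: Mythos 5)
Your proposal is correct and takes exactly the paper's own route: the paper deduces the corollary by noting that $\mathcal W$ is a simple, stably projectionless C*-algebra with trivial K-groups that arises as an inductive limit of RSH$_0$-algebras (necessarily with no dimension growth, since the building blocks have $1$-dimensional total spaces), and then invoking part (i) of Theorem \ref{thm: main1}. Your additional care in checking that Jacelon's Razak-type building blocks, as $1$-dimensional NCCW-complexes, genuinely fit Definition \ref{def: RSH0} with uniformly bounded total-space dimension merely makes explicit a verification the paper leaves implicit.
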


Another important example of a simple C*-algebra for which Theorem \ref{thm: main1} applies is the Jiang-Su algebra $\mathcal{Z}$. This C*-algebra is constructed as an inductive limit of RSH-algebras and is such that $\mathrm{K}_1(\mathcal{Z})=0$ and $\mathrm{K}_0(\mathcal{Z})=\Z$. For this C*-algebra we can proof the following:
 
\begin{proposition}\label{prop: Jiang-Su}
The following statements hold:
\begin{enumerate}
\item If $A$ is a C*-algebra that is locally contained in the class of RSH$_0$-algebras with 1-dimensional spectrum then $\mathrm{sr}(A\otimes \mathcal Z)=1$.
\item If $0\to A\to B\to C\to 0$ is an exact sequence of C*-algebras with nonzero index map $\delta\colon \mathrm{K}_1(A/I)\to \mathrm{K}_0(I)$ then $\mathrm{sr}(B\otimes \mathcal{Z})\neq 1$.
\item If $X$ is CW-complex then 
\begin{align*}
\mathrm{sr}(\mathrm{C}(X)\otimes \mathcal{Z})=
\begin{cases}
1 \quad\text{if}\quad\mathrm{dim}(X)\le 1,\\
2 \quad\text{if}\quad\mathrm{dim}(X)>1.
\end{cases}
\end{align*}

\end{enumerate}
\end{proposition}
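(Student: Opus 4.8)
The plan is to deduce all three parts from Theorem~\ref{thm: main1} and from the necessary condition for stable rank one, with the second part serving as the engine for the lower bound in~(3). For~(1) there is nothing to do beyond checking hypotheses: the Jiang--Su algebra is a simple inductive limit of RSH-algebras (hence RSH$_0$-algebras) with no dimension growth, it is unital, its only nonzero projection is its unit, and $\mathrm K_1(\mathcal Z)=0$. Thus the second part of Theorem~\ref{thm: main1}, applied with $B=\mathcal Z$, gives $\mathrm{sr}(A\otimes\mathcal Z)=1$ for every $A$ approximately (equivalently, locally) contained in the class of RSH$_0$-algebras with $1$-dimensional spectrum.

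For~(2) I would tensor the given extension by $\mathcal Z$. As $\mathcal Z$ is nuclear,
\begin{equation*}
0\to A\otimes\mathcal Z\to B\otimes\mathcal Z\to C\otimes\mathcal Z\to 0
\end{equation*}
is again exact, and the morphism of extensions $d\mapsto d\otimes 1_{\mathcal Z}$ induces a morphism of the associated six-term sequences. Since the unital inclusion $\C\hookrightarrow\mathcal Z$ is a $KK$-equivalence, each vertical map $\mathrm K_*(\,\cdot\,)\to\mathrm K_*(\,\cdot\otimes\mathcal Z)$ is an isomorphism; by naturality of the index map, the index map of the tensored extension is carried onto $\delta$ and is therefore nonzero. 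Finally I would invoke the standard fact (the easy direction of Theorem~\ref{th: stable rank}, valid for every C*-algebra, cf.\ \cite{Rieffel}) that in a C*-algebra of stable rank one the map $\mathrm K_1(D)\to\mathrm K_1(D/J)$ is surjective, so every index map vanishes. Were $\mathrm{sr}(B\otimes\mathcal Z)=1$, the index map of the tensored extension would vanish, a contradiction; hence $\mathrm{sr}(B\otimes\mathcal Z)\neq1$.

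For the lower bound in~(3), assume $\dim X>1$, so $X$ has a cell $e$ of dimension $m\ge 2$. Identifying the open cell with $\R^m$ and fixing a closed $2$-disk $D\subseteq\R^m\cong e$, the set $D$ is compact, hence closed in $X$, and $\mathrm C(D)\cong\mathrm C(X)/\mathrm C_0(X\setminus D)$ is a quotient of $\mathrm C(X)$. For the boundary extension
\begin{equation*}
0\to\mathrm C_0(\R^2)\to\mathrm C(D)\to\mathrm C(\partial D)\to 0,
\end{equation*}
the generator of $\mathrm K_1(\mathrm C(\partial D))=\mathrm K_1(\mathrm C(S^1))$ does not lift to a unitary of $\mathrm C(D)$ (any extension to the disk must have a zero, by the nonvanishing winding number), so its index in $\mathrm K_0(\mathrm C_0(\R^2))$ is nonzero. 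By~(2), $\mathrm{sr}(\mathrm C(D)\otimes\mathcal Z)\ge 2$. Since $\mathcal Z$ is nuclear, $\mathrm C(D)\otimes\mathcal Z$ is a quotient of $\mathrm C(X)\otimes\mathcal Z$, whence $\mathrm{sr}(\mathrm C(X)\otimes\mathcal Z)\ge 2$ by monotonicity of the stable rank under quotients \cite[Theorem 4.3]{Rieffel}. When $\dim X\le 1$ the algebra $\mathrm C(X)$ lies in the class of RSH$_0$-algebras with $1$-dimensional spectrum, so~(1) gives $\mathrm{sr}(\mathrm C(X)\otimes\mathcal Z)=1$.

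The remaining, and hardest, point is the upper bound $\mathrm{sr}(\mathrm C(X)\otimes\mathcal Z)\le 2$ for $\dim X>1$. I would argue by induction along the skeletal filtration $X^{(1)}\subseteq X^{(2)}\subseteq\cdots\subseteq X^{(n)}=X$, using the extensions
\begin{equation*}
0\to\bigoplus_{k\text{-cells}}\mathrm C_0(\R^k)\otimes\mathcal Z\to\mathrm C(X^{(k)})\otimes\mathcal Z\to\mathrm C(X^{(k-1)})\otimes\mathcal Z\to 0,
\end{equation*}
with base case $\mathrm{sr}(\mathrm C(X^{(1)})\otimes\mathcal Z)=1$ supplied by~(1). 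Each step would combine Rieffel's extension inequality, which bounds the stable rank of the middle term by the stable ranks of the ideal and quotient together with the connected stable rank of the quotient, with the auxiliary bounds $\mathrm{sr}(\mathrm C_0(\R^k)\otimes\mathcal Z)\le 2$ that must themselves be established along the way. The main obstacle is to prevent the stable rank from accumulating beyond $2$ across the up-to-$\dim X$ extension steps; this forces one to bound the connected stable rank of each quotient $\mathrm C(Y)\otimes\mathcal Z$ by $2$ as well, and it is precisely here that the triviality of $\mathrm K_1(\mathcal Z)$ together with $\mathcal Z$-stability must be exploited to keep the relevant groups of invertibles connected after a single stabilization.
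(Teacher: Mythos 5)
Your parts (1) and (2), and the lower bound in part (3), are correct and essentially identical to the paper's own proof: part (1) is exactly the second statement of Theorem \ref{thm: main1} applied to $B=\mathcal Z$; part (2) is the paper's argument with the K\"unneth formula replaced by the (here equivalent) $KK$-equivalence $\C\hookrightarrow\mathcal Z$, followed by the same appeal to \cite[Lemma 3]{Nistor} that stable rank one forces all index maps to vanish; and your closed $2$-disc argument for the lower bound in (3) is the paper's argument as well.

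The genuine gap is the upper bound $\mathrm{sr}(\mathrm C(X)\otimes\mathcal Z)\le 2$ for $\dim X>1$, which you never actually establish. The paper disposes of it with a single citation: by \cite[Proposition 1.5]{Sudo} the stable rank of $\mathrm C(X)\otimes\mathcal Z$ is at most two for any CW-complex $X$. Your skeletal induction is only a plan, and as stated it does not close. Rieffel's extension inequality bounds the stable rank of the middle algebra by $\max\{\mathrm{sr}(I),\,\mathrm{sr}(B/I),\,\mathrm{csr}(B/I)\}$, so to keep the bound at $2$ through up to $\dim X$ extension steps you must prove $\mathrm{csr}(\mathrm C(X^{(k-1)})\otimes\mathcal Z)\le 2$ at every stage (as well as the auxiliary bounds $\mathrm{sr}(\mathrm C_0(\R^k)\otimes\mathcal Z)\le 2$, which are themselves instances of the very statement being proved). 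The only general estimate available, $\mathrm{csr}\le\mathrm{sr}+1$, gives $3$, and you yourself flag this as ``the main obstacle'' without resolving it; invoking ``triviality of $\mathrm K_1(\mathcal Z)$ together with $\mathcal Z$-stability'' is a hope, not an argument, and nothing in the paper's toolkit (Theorem \ref{th: stable rank}, Lemma \ref{lem: rank}, etc.) supplies such a connected-stable-rank bound. Note also that the natural proof of the $\le 2$ bound does not go through the cell structure of $X$ at all: writing $\mathcal Z$ as an inductive limit of dimension-drop interval algebras with both matrix sizes tending to infinity, the algebra $\mathrm C(X)\otimes\mathcal Z_{p,q}$ is a subhomogeneous algebra whose matrix sizes dominate $\min(p,q)$ over spaces of dimension $\dim X+1$, so the standard stable rank formulas for (sub)homogeneous algebras give $\mathrm{sr}\le 2$ once $\min(p,q)$ is large relative to $\dim X$, and stable rank does not increase along inductive limits \cite[Theorem 5.1]{Rieffel}. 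That, in essence, is what the cited result of Sudo provides, and it --- or an equivalent citation --- is what your proof is missing.
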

\begin{proof}
The first part of proposition is a corollary of the second part of Theorem \ref{thm: main1}. Let us now prove (ii) and (iii).

(ii) Let us proceed by contradiction. Suppose that $\mathrm{sr}(B\otimes \mathcal{Z})=1$. Then $\mathrm{sr}(A\otimes \mathcal{Z})=\mathrm{sr}(C\otimes \mathcal{Z})=1$. Using the Kunneth formula and that $\mathrm{K}_0(\mathcal{Z})=\Z$ and $\mathrm{K}_1(\mathcal{Z})=0$ we get the following commutative diagram involving the respective index maps:
\begin{equation*}
\xymatrix{\mathrm{K}_1(C\otimes \mathcal{Z}) \ar[d]^\cong \ar[r]^{\delta'} & \mathrm{K}_0(A\otimes \mathcal{Z})\ar[d]^\cong \\ 
\mathrm{K}_1(C)\ar[r]^{\delta} & \mathrm{K}_0(A)}
\end{equation*}
By assumption $\delta$ is nonzero thus $\delta'$ is nonzero. But this contradicts the fact that $\mathrm{sr}(B\otimes Z)=1$ by \cite[Lemma 3]{Nistor}.

(iii) By \cite[Proposition 1.5]{Sudo} the stable rank of $\mathrm{C}(X)\otimes \mathcal{Z}$ is at most two for any CW-complex $X$. If  $\mathrm{dim} (X)\le 1$ then $\mathrm{sr}(\mathrm{C}(X)\otimes \mathcal Z)=1$ by (i). Now let us consider the case $\mathrm{dim}(X)>1$. We will proceed by contradiction. Suppose that $\mathrm{sr}(\mathrm{C}(X)\otimes \mathcal{Z})=1$. Since the dimension of $X$ is greater than one $X$ contains a closed subspace homeomorphic to the unit disc $\mathbb{D}$. It follows that both $\mathrm{C}(\overline{\mathbb{D}})\otimes \mathcal Z$ and $\mathrm{C}(\mathbb{T})\otimes \mathcal{Z}$ are quotients of $\mathrm{C}(X)\otimes \mathcal{Z}$ and that $\mathrm{C}(\mathbb{D})\otimes \mathcal{Z}$ is an ideal of $\mathrm{C}(X)\otimes\mathcal{Z}$.  Hence,
$$\mathrm{sr}(\mathrm{C}(\overline{\mathbb{D}})\otimes \mathcal{Z})=\mathrm{sr}(\mathrm{C}_0(\mathbb{D})\otimes \mathcal{Z})=\mathrm{sr}(\mathrm{C}(\mathbb{T})\otimes \mathcal{Z})=1.$$ 
Consider the three term exact sequence:
$$0\to \mathrm{C}_0(\mathbb{D})\to\mathrm{C}(\overline{\mathbb{D}})\to \mathrm{C}(\mathbb{T})\to 0.$$ 
Then the index map $\delta\colon \mathrm{K}_1(\mathrm{C}(\mathbb{T}))\to \mathrm{K}_0(\mathrm{C}(\mathbb{D}))$ is non-zero. This implies that the respective index map corresponding to the exact sequence
 $$0\to \mathrm{C}_0(\mathbb{D})\otimes \mathcal{Z}\to\mathrm{C}(\overline{\mathbb{D}})\otimes \mathcal{Z}\to \mathrm{C}(\mathbb{T})\otimes \mathcal{Z}\to 0,$$
is nonzero. But this contradicts the fact that $\mathrm{sr}(\mathrm{C}(\overline{\mathbb{D}})\otimes \mathcal{Z})=1$ by \cite[Lemma 3]{Nistor}. 
\end{proof}

\begin{proposition}
Let $A$ and $B$ be C*-algebras with $B$ as in Theorem \ref{thm: main1}. Suppose that a quotient of $A\otimes \mathcal{K}$ contains a properly infinite projection. Then $\mathrm{sr}(A\otimes B)\neq 1$.
\end{proposition}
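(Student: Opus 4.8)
The plan is to argue by contraposition: assume $\mathrm{sr}(A\otimes B)=1$ and show that this forces every quotient of $A\otimes\mathcal K$ to be stably finite, contradicting the hypothesis. First I would propagate stable rank one through the three operations of Rieffel that are already used in the proof of Theorem \ref{thm: main1}. From $\mathrm{sr}(A\otimes B)=1$, the matrix formula \cite[Theorem 6.1]{Rieffel} gives $\mathrm{sr}(\M_n(A\otimes B))=1$ for all $n$, and then the inductive-limit estimate $\mathrm{sr}(\varinjlim D_i)\le\liminf_i\mathrm{sr}(D_i)$ of \cite[Theorem 5.1]{Rieffel}, applied to $(A\otimes B)\otimes\mathcal K=\varinjlim\M_n(A\otimes B)$, yields $\mathrm{sr}((A\otimes B)\otimes\mathcal K)=1$. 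Finally stable rank one passes to quotients by \cite[Theorem 4.3]{Rieffel}.

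Next I would produce the relevant quotient. Let $q\colon A\otimes\mathcal K\to D$ be a quotient with $D$ containing a properly infinite projection $p$, and choose $s_1,s_2\in D$ with $s_i^*s_i=p$ and orthogonal subprojections $q_i=s_is_i^*\le p$. Since $B$ is an inductive limit of subhomogeneous (hence nuclear) algebras it is nuclear, so tensoring the ideal sequence $0\to\ker q\to A\otimes\mathcal K\to D\to 0$ by $B$ remains exact; identifying $(A\otimes\mathcal K)\otimes B$ with $(A\otimes B)\otimes\mathcal K$, the map $q\otimes\mathrm{id}_B$ realizes $D\otimes B$ as a quotient of $(A\otimes B)\otimes\mathcal K$. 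By the first paragraph, $\mathrm{sr}(D\otimes B)=1$.

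I would then transport the infiniteness of $p$ into $D\otimes B$. Fix a nonzero $b\in B_+$. The multipliers $s_i\otimes 1\in M(D\otimes B)$ satisfy $(s_i\otimes 1)(p\otimes b)(s_i\otimes 1)^*=q_i\otimes b$ and $(s_i^*\otimes 1)(q_i\otimes b)(s_i\otimes 1)=p\otimes b$, so each $q_i\otimes b$ is Cuntz equivalent to $p\otimes b$; as $q_1\otimes b\perp q_2\otimes b$ and $(q_1+q_2)\otimes b\le p\otimes b$, this gives $(p\otimes b)\oplus(p\otimes b)\precsim p\otimes b$ in $\Cu(D\otimes B)$. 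Thus $p\otimes b$ is a nonzero properly infinite positive element of $D\otimes B$, and the goal is to see that this is incompatible with $\mathrm{sr}(D\otimes B)=1$.

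When $B$ is unital with its unit as its only projection this is immediate, and I would record it first: $p\otimes 1_B$ is then an honest properly infinite, hence infinite, projection in $D\otimes B$, while a C*-algebra of stable rank one is finite (\cite{Rieffel}), a contradiction. The main obstacle is the projectionless case, where $B$ --- and possibly all of $D\otimes B$ --- has no projections, so the contradiction must be extracted at the level of the Cuntz semigroup. Here I would use that stable rank one forces $\Cu(D\otimes B)$ to be stably finite, i.e.\ to contain no nonzero properly infinite elements, a consequence of the weak cancellation enjoyed by the Cuntz semigroup of a stable-rank-one algebra. This implication is the delicate point: stable finiteness by itself does \emph{not} exclude properly infinite positive elements (the cone over $\mathcal O_2$ is stably finite yet contains one), so it is genuinely the stronger hypothesis of stable rank one that must be invoked, and carrying this step out rigorously is where the real work lies.
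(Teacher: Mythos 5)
Your reductions (stability of stable rank one, passage to quotients using nuclearity of $B$) and the construction of the properly infinite positive element $p\otimes b$ are correct, and your argument is complete in the unital case. The gap is the final step in the projectionless case, and it is not merely an unverified technical point: the lemma you propose to invoke is false. Stable rank one does \emph{not} imply that the Cuntz semigroup contains no nonzero properly infinite elements. The compact operators $\mathcal{K}$ already give a counterexample: $\mathrm{sr}(\mathcal{K})=1$ (it is an AF-algebra), yet every positive element $h\in\mathcal{K}$ of infinite rank satisfies $h\oplus h\precsim h$, because $\langle h\rangle$ is the largest element $\infty$ of $\Cu(\mathcal{K})\cong\{0,1,2,\dots\}\cup\{\infty\}$ and $\infty+\infty=\infty$. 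Note that this semigroup does satisfy weak cancellation ($x+z\ll y+z$ implies $x\le y$), so weak cancellation cannot yield the implication you want. What stable rank one genuinely rules out is an infinite \emph{projection}, via cancellation of projections; since $B$ is projectionless, the class $\langle p\otimes b\rangle$ is never compact, and your construction only produces a properly infinite positive element --- exactly the kind of object that can coexist with stable rank one. So in the case of interest (e.g.\ $B=\mathcal{W}$) your argument cannot be completed along these lines.

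This is precisely where the paper's proof takes a different, and essential, turn: from the properly infinite projection one gets an embedding of $\mathcal{O}_\infty$ into the quotient $D$ of $A\otimes\mathcal{K}$, and then one uses that $\mathcal{O}_\infty\otimes B$ is a simple, separable, nuclear, purely infinite, non-unital (hence stable, by Zhang's dichotomy) C*-algebra with trivial K-theory, so that the Kirchberg--Phillips classification gives $\mathcal{O}_\infty\otimes B\cong\mathcal{O}_2\otimes\mathcal{K}$. This step manufactures honest projections where $B$ has none: the image of a nonzero projection of $\mathcal{O}_2\otimes\mathcal{K}$ under the resulting embedding into $D\otimes B\subseteq(A\otimes B)\otimes\mathcal{K}$ is an infinite projection, and \emph{that} contradicts $\mathrm{sr}((A\otimes B)\otimes\mathcal{K})=1$ by cancellation. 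In short, the infiniteness must be converted back into a projection by a pure-infiniteness/classification argument; it cannot be detected by Cuntz-semigroup cancellation properties of stable-rank-one algebras, which is what your last paragraph attempts. Any repair of your proof will need such a projection-producing device.
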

\begin{proof}
Suppose that $I$ is an ideal of $A$ such that the quotient C*-algebra $(A/I)\otimes \K$ contains a properly infinite projection. In other words, there is an embedding of $\mathcal{O}_\infty$ into $(A/I)\otimes \K$.  By the classification theorem of Kirchberg and Phillips $\mathcal{O}_\infty\otimes B\cong \mathcal{O}_2\otimes \K$; thus,  $\mathcal{O}_2$ embeds into $(A/I)\otimes B\otimes \K$. It follows that, $\mathrm{sr}((A/I)\otimes B\otimes\K)\neq 1$. Now, using that $\mathrm{sr}((A/I)\otimes B\otimes \K)\le \mathrm{sr}(A\otimes B\otimes \K)$ and that $\mathrm{sr}(A\otimes B\otimes \K)=1$ if and only if $\mathrm{sr}(A\otimes B)=1$ we conclude that $\mathrm{sr}(A\otimes B)\neq 1$.
\end{proof}

\subsection{Reduction of the topological dimension}

Let $\mathcal{B}$ denote the class of C*-algebras that are stably isomorphic to commutative C*-algebras. Let $\mathcal{C}$ denote the class of C*-algebras that are stably isomorphic to 1-dimensional NCCW-complexes. Recall that a C*-algebra $A$ is a 1-dimensional noncommutative CW-complex (or shortly a NCCW-complex) if it can be expressed as a pullback diagram of the form:
\begin{align}\label{eq: CW}
\begin{aligned}
\xymatrix{A \ar[d] \ar[r] & E\ar[d]^\phi \\ \mathrm{C}_0([0,1], F)\ar[r]^{\,\,\quad\mathrm{ev}_0\oplus\mathrm{ev}_1} & F\oplus F}
\end{aligned}
\end{align}
where $E$ and $F$ are finite dimensional C*-algebras. (We do not assume that the map from $E$ to $F\oplus F$ is unital)

\begin{lemma}\label{lem: NCCW}
If $A$ is a 1-dimensional NCCW-complex and $I$ is an ideal of $A$ then $I$ can be written as an inductive limit of a sequence of 1-dimensional NCCW-complexes.
\end{lemma}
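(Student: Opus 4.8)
The plan is to combine the description of the ideals of a pullback from Corollary \ref{cor: compact ideals} with an exhaustion of the relevant open subsets of $[0,1]$ by finite unions of intervals. Write $A$ as the pullback \eqref{eq: CW}, so $A=\mathrm{C}([0,1],F)\oplus_{F\oplus F}E$ with respect to the (surjective) map $\mathrm{ev}_0\oplus\mathrm{ev}_1$ and a map $\phi\colon E\to F\oplus F$. By Corollary \ref{cor: compact ideals}, the ideal $I$ has the form $I=I'\oplus_K J'$ with $I'\trianglelefteq\mathrm{C}([0,1],F)$, $J'\trianglelefteq E$, $K\trianglelefteq F\oplus F$, and $(\mathrm{ev}_0\oplus\mathrm{ev}_1)(I')=K=\I(\phi(J'))$. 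Since $E,F$ are finite dimensional, $J'$ is a direct summand of $E$ and $K$ a direct summand of $F\oplus F$; writing $F=\bigoplus_i\M_{n_i}$ and using the identification of ideals of $\mathrm{C}([0,1],\M_{n_i})$ with open subsets of $[0,1]$, we get $I'=\bigoplus_i\mathrm{C}_0(U_i,\M_{n_i})$ for open sets $U_i\subseteq[0,1]$. The equality $K=(\mathrm{ev}_0\oplus\mathrm{ev}_1)(I')\supseteq\phi(J')$ forces, for every $e\in J'$, the first (resp.\ second) $F$-component of $\phi(e)$ to be supported on the indices $i$ with $0\in U_i$ (resp.\ $1\in U_i$). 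Concretely, $I=\{(f,e):f=(f_i)_i,\ f_i\in\mathrm{C}_0(U_i,\M_{n_i}),\ e\in J',\ (f(0),f(1))=\phi(e)\}$.

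Next I would exhaust each $U_i$. For each $i$ I choose an increasing sequence of open sets $U_i^{(n)}\nearrow U_i$, each a finite disjoint union of relatively open intervals, with the endpoint behaviour frozen: $0\in U_i^{(n)}\iff 0\in U_i$ and $1\in U_i^{(n)}\iff 1\in U_i$ (shrink the interior ends of the countably many components of $U_i$ while keeping the genuine endpoints $0,1$, and retain only finitely many components at stage $n$). Put $I'_n=\bigoplus_i\mathrm{C}_0(U_i^{(n)},\M_{n_i})$ and $M_n=\{(f,e)\in I:f\in I'_n\}$. Since $U_i^{(n)}\subseteq U_i^{(n+1)}$, extension by zero realizes each $\mathrm{C}_0(U_i^{(n)},\M_{n_i})$ as a genuine subalgebra of $\mathrm{C}_0(U_i^{(n+1)},\M_{n_i})$, so the $M_n$ are increasing sub-C*-algebras of $I$ with $I=\overline{\bigcup_nM_n}$.

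The key structural point is that each $M_n$ is a $1$-dimensional NCCW-complex. Each component of $U_i^{(n)}$ is an interval of type $(a,b)$, $[0,b)$, $(a,1]$, or $[0,1]$; I reparametrize the closure of each component homeomorphically onto $[0,1]$, oriented so that a genuine endpoint $0$ (resp.\ $1$), when present, corresponds to the parameter $0$ (resp.\ $1$). This identifies $I'_n$ with $\mathrm{C}_0([0,1],\widetilde F)$, where $\widetilde F$ is the finite direct sum of one copy of $\M_{n_i}$ per component of each $U_i^{(n)}$. Setting $\widetilde E=J'$ and defining $\widetilde\phi\colon\widetilde E\to\widetilde F\oplus\widetilde F$ block by block — for each $i$ with $0\in U_i$ placing the $i$-th component of $\mathrm{ev}_0(\phi(e))$ into the parameter-$0$ slot of the block of the component of $U_i^{(n)}$ containing $0$, symmetrically at $1$, and $0$ into every remaining block-endpoint — one checks that $M_n$ is exactly the pullback \eqref{eq: CW} for $(\widetilde E,\widetilde F,\widetilde\phi)$. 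A single $e\in J'=\widetilde E$ governs all blocks touching a given endpoint at once, which is precisely how the pullback couples them, so the identification is faithful.

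The main obstacle — the only step that is not purely formal — is the density $I=\overline{\bigcup_nM_n}$, which must be established while keeping the endpoint constraint $(f(0),f(1))=\phi(e)$ \emph{exactly} satisfied, since a naive cutoff would alter the values at $0$ and $1$. The remedy is to cut off only away from the endpoints: given $(f,e)\in I$ and $\epsilon>0$, choose for each $i$ a compact $L_i\subseteq U_i$ with $\|f_i\|<\epsilon$ off $L_i$ and with $L_i$ a neighbourhood in $U_i$ of $\{0,1\}\cap U_i$; for $n$ large $L_i\subseteq U_i^{(n)}$, and a function $\chi_i^{(n)}\in\mathrm{C}_0(U_i^{(n)})$ with $0\le\chi_i^{(n)}\le1$ and $\chi_i^{(n)}\equiv1$ on $L_i$ yields $f_i\chi_i^{(n)}\in\mathrm{C}_0(U_i^{(n)},\M_{n_i})$ with $\|f_i\chi_i^{(n)}-f_i\|<\epsilon$ and, since $\chi_i^{(n)}\equiv1$ near the endpoints, $f_i\chi_i^{(n)}(0)=f_i(0)$ and $f_i\chi_i^{(n)}(1)=f_i(1)$. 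Thus $(f\chi^{(n)},e)\in M_n$ approximates $(f,e)$, giving density. Finally, as $A$ is separable the component index set may be taken countable, so $I=\varinjlim M_n$ exhibits $I$ as an inductive limit of a sequence of $1$-dimensional NCCW-complexes.
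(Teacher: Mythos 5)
Your proof is correct and takes essentially the same route as the paper: both start from the pullback description of the ideal (Theorem \ref{th: ideals}, respectively Corollary \ref{cor: compact ideals}), decompose the open sets $U_i\subseteq[0,1]$ into intervals, exhaust by finite disjoint unions of intervals, and recognize the finite stages as 1-dimensional NCCW-complexes after reparametrizing the closures of the components, with zeros placed in the block-endpoints not touching $\{0,1\}$. The only difference is organizational: the paper first splits $I\cong B\oplus\bigoplus_i\mathrm{C}_0(W_i,\M_{n_i})$, separating the components touching $\{0,1\}$ (which assemble into a single NCCW-complex $B$) from the interior components (whose exhaustion by finite unions is immediate), whereas you exhaust all components simultaneously with frozen endpoint behaviour and therefore must supply the endpoint-preserving cutoff argument for density of $\bigcup_n M_n$ in $I$ --- which you do correctly.
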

\begin{proof}
Let $A$ be a 1-dimensional NCCW-complex as in the pullback diagram \eqref{eq: CW} and let $F=\bigoplus_{i=1}^k\M_{n_i}$. Let $I$ be an ideal of $A$. Then by Theorem \ref{th: ideals} there are open subsets $(U_i)_{i=1}^k$ of $[0,1]$ and an ideal $E'$ of $E$ such that
 \begin{equation*}
\xymatrix{I \ar[d] \ar[r] & E'\ar[d]^{\phi\vert_{E'}} \\ \bigoplus_{i=1}^k\mathrm{C}_0(U_i, \M_{n_i})\ar[r]^{\,\,\,\quad\quad\mathrm{ev}_0\oplus\mathrm{ev}_1} & F\oplus F}
\end{equation*}
is a pullback diagram. Decompose each $U_i$ as the union of disjoint open subintervals of $[0,1]$. Denote by $V_i$ and $W_i$ the union of the open subintervals  in the decomposition of $U_i$ that contain either the point $0$ or the point $1$, and the union of the open subintervals in this decomposition that do not contain either point.  Let $B$ be the C*-algebra obtained by the pullback diagram
\begin{equation*}
\xymatrix{B \ar[d] \ar[r] & E'\ar[d]^{\phi\vert_{E'}} \\ \bigoplus_{i=1}^k\mathrm{C}_0(V_i, \M_{n_i})\ar[r]^{\,\,\,\quad\quad\mathrm{ev}_0\oplus\mathrm{ev}_1} & F\oplus F}.
\end{equation*}
Then $B$ is a 1-dimensional NCCW-complex and $A\cong B\oplus \bigoplus_{i=1}^k\mathrm{C}_0(W_i, \M_{n_i})$. For each $i$ write $W_i$ as the union of an increasing sequence of open subsets $W_i^{(j)}$, $j=1,2,\cdots$, where each $W_i^{(j)}$ is the disjoint union of a finite number of open intervals. It follows that $$B\oplus\bigoplus_{i=1}^k\mathrm{C}_0(W_i, \M_{n_i})=\varinjlim B\oplus\bigoplus_{i=1}^k\mathrm{C}_0(W_i^{(j)}, \M_{n_i}).$$
Now, each algebra $\mathrm{C}_0(W_i^{(j)}, \M_{n_i})$ is a 1-dimensional NCCW-complex. Therefore, $A$ can be written as an inductive limit of 1-dimensional NCCW-complexes.
\end{proof}

\begin{theorem}
Let $A$ be a C*-algebra that is locally contained in $\mathcal{B}$ and let $B$ be a simple inductive limit of 1-dimensional NCCW-complexes such that $\mathrm{K}_0(B)=\mathrm{K}_1(B)=0$. Then $A\otimes B$ is locally contained in $\mathcal{C}$. Moreover, if $A$ is separable then $A\otimes B$ can be written as an inductive limit of a sequence of C*-algebras in $\mathcal{C}$.
\end{theorem}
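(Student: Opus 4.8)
The plan is to prove the two assertions in turn, reducing the local containment statement to a single model computation over a cube and then upgrading to an inductive limit by semiprojectivity.

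First I would reduce to the commutative case. Local containment is transitive and $-\otimes B$ is functorial, so given a finite set $F\subseteq A\otimes B$ and $\epsilon>0$ I approximate the elements of $F$ by finite sums $\sum_j a_j\otimes b_j$, collect the finitely many $a_j$, and use that $A$ is locally contained in $\mathcal B$ to produce $D\in\mathcal B$ and $\rho\colon D\to A$ capturing them; then $(\rho\otimes\mathrm{id}_B)(D\otimes B)$ captures $F$, so it suffices to show $D\otimes B$ is locally contained in $\mathcal C$ for each $D\in\mathcal B$. Writing $D\otimes\mathcal K\cong \mathrm C_0(Y)\otimes\mathcal K$ with $Y$ locally compact Hausdorff gives $D\otimes B\otimes\mathcal K\cong \mathrm C_0(Y)\otimes B\otimes\mathcal K$; since matrix amplifications of members of $\mathcal C$ again lie in $\mathcal C$, and $D\otimes B$ is a full hereditary subalgebra of this stabilization, the closure of $\mathcal C$ and of inductive limits of $\mathcal C$ under hereditary subalgebras — available through Lemma \ref{lem: NCCW} — reduces me to showing that $\mathrm C_0(Y)\otimes B$ is locally contained in $\mathcal C$. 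Exactly as in the proof of Theorem \ref{thm: main1}, the one-point compactification together with Engelking's inverse-limit theorems \cite[Theorem 3.3.5]{Engelking} and \cite[Theorem 1.13.2]{Engelking}, and the recursive subhomogeneous decomposition of $\mathrm C(Y)$ with cube base spaces via \cite[Proposition 4.1 (i)]{Brown-Pedersen}, let me reduce $Y$ successively to a finite-dimensional compact metric space, to a finite CW-complex, and finally to the single model $Y=[0,1]^k$.

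The heart of the argument is to show that $\mathrm C([0,1]^k)\otimes B$ is locally contained in $\mathcal C$. Writing $B=\varinjlim(B_n,\phi_{n,m})$ with each $B_n$ a $1$-dimensional NCCW-complex, a finite subset of $\mathrm C([0,1]^k)\otimes B$ is, to within $\epsilon$, the image of a finite subset of some $\mathrm C([0,1]^k,B_n)$, and the latter is a pullback of the stably commutative algebras $\mathrm C([0,1]^k,E_n)$ and $\mathrm C([0,1]^k\times[0,1],F_n)$ coming from the presentation \eqref{eq: CW} of $B_n$; in particular its spectrum is $(k+1)$-dimensional. The whole point is to dispose of the extra dimensions by passing deep into the limit: I would show that for $m\gg n$ the connecting map $\mathrm{id}\otimes\phi_{n,m}\colon \mathrm C([0,1]^k,B_n)\to \mathrm C([0,1]^k,B_m)$ factors, to within $\epsilon$ on the given finite set, through a $1$-dimensional NCCW-complex $E$ by maps $\eta\colon \mathrm C([0,1]^k,B_n)\to E$ and $\theta\colon E\to \mathrm C([0,1]^k,B_m)$ with $\theta\eta\approx_\epsilon \mathrm{id}\otimes\phi_{n,m}$. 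Granting this, the composite $(\mathrm{id}\otimes\phi_{m,\infty})\circ\theta\colon E\to \mathrm C([0,1]^k)\otimes B$ captures the given finite set, which is precisely local containment in $\mathcal C$.

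To build the factorization I would argue by induction on $k$, mirroring the boundary-gluing technique of Lemma \ref{lem: appinvertibles}: over the $(k-1)$-dimensional boundary of a cell decomposition the algebra $\mathrm C(\partial V)\otimes B$ is, by the inductive hypothesis, already approximated by $1$-dimensional NCCW-complexes and, by Theorem \ref{thm: main1}, has stable rank one and trivial $\mathrm K_1$, so unitaries and polar parts can be connected and lifted there, while the remaining interval coordinate supplies the $\mathrm C_0([0,1],F)$ part of a new NCCW presentation. The essential input — and the step I expect to be the main obstacle — is that the hypotheses $\mathrm K_0(B)=\mathrm K_1(B)=0$ together with simplicity force the connecting maps, deep in the limit, to be approximately homotopic to maps whose $Y$-dependence can be absorbed into that interval coordinate; the vanishing K-theory removes the bundle/homotopy obstruction that would otherwise prevent collapsing the $(k+1)$-dimensional spectrum of $\mathrm C([0,1]^k,B_n)$ onto a $1$-dimensional one, and stable rank one lets me realize this collapse by actual unitaries rather than merely up to homotopy. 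The semiprojectivity of $1$-dimensional NCCW-complexes (as exploited in \cite{Robert}) then turns the approximate factorization into an honest pair of $*$-homomorphisms through a genuine member of $\mathcal C$.

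For the final assertion, suppose $A$ is separable; then $A\otimes B$ is separable and, by the above, locally contained in $\mathcal C$. Since $1$-dimensional NCCW-complexes are semiprojective, a standard intertwining argument promotes the family of approximating $*$-homomorphisms $E_i\to A\otimes B$ — whose images capture larger finite sets to finer tolerances — to an inductive sequence: semiprojectivity supplies connecting maps $E_i\to E_{i+1}$ making the relevant triangles commute within the prescribed tolerances, and the resulting inductive limit is isomorphic to $A\otimes B$. Hence $A\otimes B$ is an inductive limit of a sequence of members of $\mathcal C$, which in turn yields that its decomposition rank and nuclear dimension are equal to one, as asserted in Theorem \ref{thm: mainmain}.
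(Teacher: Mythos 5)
Your ``heart of the argument''---the approximate factorization of $\mathrm{id}\otimes\phi_{n,m}\colon \mathrm{C}([0,1]^k,B_n)\to \mathrm{C}([0,1]^k,B_m)$ through a 1-dimensional NCCW-complex for $m\gg n$---is exactly the crux of the theorem, and you do not prove it; you yourself flag it as ``the main obstacle'' and offer only a gesture (induction on $k$ with boundary gluing modeled on Lemma \ref{lem: appinvertibles}, plus the assertion that vanishing K-theory ``removes the obstruction''). That gesture does not work as stated: Lemma \ref{lem: appinvertibles} is a statement about approximating a \emph{single element} by invertibles, patching unitaries over the boundary of an open set, and it provides no mechanism whatsoever for producing the pair of $*$-homomorphisms $\eta,\theta$ or the 1-dimensional target $E$; collapsing the $(k+1)$-dimensional spectrum is a statement about morphisms, not elements, and nothing in your sketch constructs those morphisms. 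The paper closes precisely this gap by citing an external theorem, \cite[Theorem 4.2 (iv)]{Eilers-Loring-Pedersen}: an inductive limit of 2-dimensional NCCW-complexes with stable rank one and trivial $\mathrm{K}_0$ is an inductive limit of 1-dimensional NCCW-complexes. The induction is then one interval factor at a time: $\mathrm{C}[0,1]\otimes B$ is a limit of 2-dimensional NCCW-complexes, has stable rank one by Theorem \ref{th: stable rank}, and has $\mathrm{K}_0=0$, so the Eilers--Loring--Pedersen theorem applies; writing $\mathrm{C}([0,1]^{n})\otimes B=\mathrm{C}[0,1]\otimes\bigl(\mathrm{C}([0,1]^{n-1})\otimes B\bigr)$ repeats the argument. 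Without that theorem, or an actual proof of your factorization claim, what you have is a plan rather than a proof.

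There are also two flaws in your reductions. First, your chain ``compact Hausdorff $\to$ compact metric $\to$ finite CW-complex $\to$ cube'' needs the class of algebras locally contained in $\mathcal{C}$ to be closed under the pullbacks occurring in an RSH decomposition of $\mathrm{C}(Y)$; but \cite[Proposition 4.1 (i)]{Brown-Pedersen} is a stable rank statement, and there is no reason a pullback of two inductive limits of 1-dimensional NCCW-complexes is again one. The paper avoids pullbacks entirely: it passes from cubes to the Hilbert cube $\mathrm{C}([0,1]^\N)\otimes B$ (inductive limit plus semiprojectivity of 1-dimensional NCCW-complexes), then realizes $\mathrm{C}(X)\otimes B$, for $X$ compact metric, as a \emph{quotient} of $\mathrm{C}([0,1]^\N)\otimes B$ via a closed embedding $X\hookrightarrow [0,1]^\N$, quotients being compatible with semiprojectivity. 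Second, in the separable case you intertwine maps from 1-dimensional NCCW-complexes directly into $A\otimes B$, but the first part only gives local containment in $\mathcal{C}$, i.e.\ approximating images of algebras \emph{stably isomorphic} to 1-dimensional NCCW-complexes, not maps from the NCCW-complexes themselves. The paper instead works with $A\otimes \mathcal{K}\otimes B$: there Lemma \ref{lem: NCCW} converts ideals of 1-dimensional NCCW-complexes into limits of such, semiprojectivity plus separability gives an honest inductive limit decomposition of the stabilization, and one descends to the hereditary subalgebra $A\otimes B$ using \cite[Corollary 4]{Coward-Elliott-Ivanescu} together with stable rank one. Your argument is silent on this descent.
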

\begin{proof}
Since $B$ is an inductive limit of 1-dimensional NCCW-complexes $\mathrm{C}[0,1]\otimes B$ is an inductive limit of 2-dimensional NCCW-complexes. By Theorem \ref{th: stable rank} the stable rank of $\mathrm{C}[0,1]\otimes B$ is one. Also,  $\mathrm{K}_0(\mathrm{C}[0,1]\otimes B)=0$. Hence, by Theorem \cite[Theorem 4.2 (iv)]{Eilers-Loring-Pedersen} the C*-algebra  $\mathrm{C}[0,1]\otimes B$ is an inductive limit of 1-dimensional NCCW-complexes. By induction the same is true for the C*-algebras $\mathrm{C}([0,1]^n)\otimes B$, $n=1,2, \cdots$. Since $\mathrm{C}([0,1]^\N)=\varinjlim \mathrm{C}([0,1]^n)$ and 1-dimensional NCCW-complexes are semiprojective it follows that $\mathrm{C}([0,1]^\N)\otimes B$ is an inductive limit of 1-dimensional NCCW-complexes. Using semiprojectivity again we get that any quotient of $\mathrm{C}([0,1]^\N)\otimes B$ is the inductive limit of a sequence of 1-dimensional NCCW-complexes. 
If $X$ is a compact metric space then it can realized as a close subspace of $[0,1]^\N$. Hence, the C*-algebra $\mathrm{C}(X)\otimes B$ is isomorphic to a quotient of $\mathrm{C}([0,1]^\N)\otimes B$. This implies that $\mathrm{C}(X)\otimes B$ can be written as the inductive limit of a sequence of 1-dimensional NCCW-complexes. In particular, $\mathrm{C}(X)\otimes B\in \mathcal{C}$. By \cite[Theorem 3.3.5]{Engelking} every compact Hausdorff space is homeomorphic to an inverse limit of compact metric spaces. In particular, this implies that any abelian unital C*-algebra is locally contained in the class of abelian C*-algebras whose spectrum are compact metric spaces. It follows that $\mathrm{C}(X)\otimes B\in \mathcal{C}$ for every compact Hausdorff space $X$. Since the class $\mathcal{C}$ is closed under the operation of taking tensor products with matrix algebras and inductive limits we get $\mathrm{C}(X,\mathcal{K})\otimes B=\varinjlim\mathrm{C}(X,\M_n)\otimes B\in \mathcal{C}$. If $A$ is a hereditary subalgebra of $\mathrm{C}(X,\mathcal{K})$ then clearly $A\otimes B\in \mathcal{C}$. This concludes the proof of the first part of the theorem. (Note that if $X$ is locally compact and $\widetilde X$ denotes the one point compactification of $X$ then $\mathrm{C}_0(X, \K)$ is a hereditary subalgebra of $\mathrm{C}(\widetilde X,\K)$, thus  $\mathrm{C}_0(X, \K)\in \mathcal{C}$.)

Now assume that $A$ is separable. We have shown that $A\otimes B\in \mathcal{C}$. Note that this implies that $A\otimes\K\otimes B$ is locally contained in the class of C*-algebras that are ideals of 1-dimensional NCCW-complexes. Hence $A\otimes\K\otimes B$ is locally contained in the class of 1-dimensional NCCW-complexes by Lemma \ref{lem: NCCW}. Now using the semiprojectivity of 1-dimensional NCCW-complexes and the separability of $A\otimes\K\otimes B$ we can write $A\otimes\K\otimes B$ as an inductive limit of a sequence of 1-dimensional NCCW-complexes. The second part of the theorem follows using the following result proved in \cite[Corollary 4]{Coward-Elliott-Ivanescu}: If $A=\varinjlim A_n$ and the stable rank of $A$ is one then every hereditary subalgebra of $A$ can be written as inductive limit of hereditary subalgebras of the C*-algebras $A_n$.   
\end{proof}

\begin{corollary}
Let $A$ and $B$ be as in the theorem above. Then $\mathrm{dr}(A\otimes B)=1$.
\end{corollary}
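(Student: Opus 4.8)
The plan is to read off both inequalities $\mathrm{dr}(A\otimes B)\le 1$ and $\mathrm{dr}(A\otimes B)\ge 1$ from the structural description of $A\otimes B$ furnished by the preceding theorem, together with the standard permanence properties of the decomposition rank; the theorem does all the genuine work, and the corollary is essentially a packaging statement.

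For the upper bound I would first record that every $1$-dimensional NCCW-complex has decomposition rank at most $1$: such an algebra is subhomogeneous with primitive ideal space of covering dimension at most $1$, so by Winter's computation of the decomposition rank of subhomogeneous C*-algebras one gets $\mathrm{dr}\le 1$. Since the decomposition rank is invariant under stable isomorphism, i.e. $\mathrm{dr}(D\otimes\K)=\mathrm{dr}(D)$, every member of the class $\mathcal{C}$ of algebras stably isomorphic to a $1$-dimensional NCCW-complex also has $\mathrm{dr}\le 1$. The theorem shows that $A\otimes B$ is locally contained in $\mathcal{C}$ (and, when $A$ is separable, is an inductive limit of a sequence of algebras in $\mathcal{C}$). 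Since the decomposition rank does not increase under inductive limits, nor under local approximation by $*$-homomorphic images of a class of uniformly bounded decomposition rank — the completely positive approximation system witnessing $\mathrm{dr}\le 1$ transports along an approximating $*$-homomorphism $\phi\colon C\to A\otimes B$ by composing the upward contractive order-zero maps with $\phi$ (which preserves the order-zero decomposition) and lifting the relevant finite set approximately through $\phi$ — I conclude $\mathrm{dr}(A\otimes B)\le 1$.

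For the lower bound I assume $A\neq 0$ and show $A\otimes B$ is not AF, i.e. $\mathrm{dr}(A\otimes B)\ge 1$. The key observation is that $A\otimes B\otimes\K$ is projectionless. Indeed, $B$ is simple and stably finite with $\mathrm{K}_0(B)=0$, so $B$ is stably projectionless; hence $D:=B\otimes\K$ is simple and projectionless. Because $A$ is locally contained in $\mathcal{B}$, writing $A'\otimes\K\cong\mathrm{C}_0(X)\otimes\K$ for the approximants $A'\in\mathcal{B}$ shows that $A\otimes D$ is locally contained in the class of algebras $\mathrm{C}_0(X,D)$; as $D$ is simple, every ideal of $\mathrm{C}_0(X,D)$ has the form $\mathrm{C}_0(U,D)$, so every quotient is again of the form $\mathrm{C}_0(Y,D)$ and is projectionless. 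A routine spectral-gap argument then rules out a nonzero projection in $A\otimes D$: approximating it within $1/4$ by the image of a self-adjoint element of an approximant and passing to the corresponding quotient would produce a nonzero projection in a projectionless algebra. Thus $A\otimes B\otimes\K$ is projectionless, so $A\otimes B$ is not AF (an AF algebra has nonzero projections in its stabilization), giving $\mathrm{dr}(A\otimes B)\ge 1$ and hence equality.

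I expect no serious obstacle, since the substance lives inside the cited theorem. The two points requiring care are the permanence of the decomposition rank under local containment (routine once one composes with $\phi$ on the upward side and lifts finite sets through $\phi$ on the downward side) and the input $\mathrm{dr}(\text{$1$-dimensional NCCW})\le 1$, which I would cite from Winter's work rather than reprove. I would also make sure to flag that the lower bound is the only place where simplicity of $B$ and $\mathrm{K}_0(B)=\mathrm{K}_1(B)=0$ are actually used, via stable projectionlessness.
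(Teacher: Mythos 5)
Your proposal is correct and follows essentially the same route as the paper: the upper bound via Winter's estimate $\mathrm{dr}\le 1$ for $1$-dimensional NCCW-complexes, invariance of $\mathrm{dr}$ under stable isomorphism (Kirchberg--Winter), and permanence under local containment (which the paper handles by citing Arveson's Extension Theorem for the downward c.p.\ maps, the one step you describe only loosely); the lower bound via $\mathrm{dr}=0\Leftrightarrow$ AF against projectionlessness. The only difference is that you actually prove $A\otimes B\otimes\mathcal{K}$ is projectionless (via stable projectionlessness of $B$ and the spectral-gap argument through quotients of the approximants $\mathrm{C}_0(X,D)$), a fact the paper simply asserts.
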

\begin{proof}
The decomposition rank of a 1-dimensional NCCW-complex is at most 1 by \cite[Theorem 1.6]{Winter-subhom}. Hence by \cite[Corollary 3.11 and Proposition 3.10]{Kirchberg-Winter} the decomposition rank of a C*-algebra that is stable isomorphic to a 1-dimensional NCCW-complex is at most 1. Therefore, the decomposition rank of any C*-algebra in $\mathcal{C}$ is at most one. By the definition of decomposition rank and Averson's Extension Theorem the same holds for C*-algebras that are locally contained in $\mathcal{C}$. In particular, $\mathrm{dr}(A\otimes B)\le 1$. The decomposition rank of $A\otimes B$ can not be zero since this would imply that $A\otimes B$ is an AF-algebra (see \cite[Example 6.1 (i)]{Kirchberg-Winter}) contradicting the fact that $A\otimes B$ is projectionless. 
\end{proof}

\begin{corollary}
If $A$ is a C*-algebra that is locally contained in $\mathcal{B}$. Then $A\otimes \mathcal W$ is locally contained in $\mathcal{C}$. In particular, $\mathrm{dr}(A\otimes \mathcal W)=1$. 
\end{corollary}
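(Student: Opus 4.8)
The plan is to obtain this statement as a direct specialization of the preceding theorem and corollary to the case $B=\mathcal{W}$. The only substantive task is to verify that $\mathcal{W}$ satisfies all the hypotheses imposed on $B$ in the theorem above; once that is done, both assertions follow by citation.

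First I would recall the structural properties of $\mathcal{W}$ established in \cite{Jacelon}. By construction $\mathcal{W}$ is a simple, separable, nuclear, stably projectionless C*-algebra with $\mathrm{K}_0(\mathcal{W})=\mathrm{K}_1(\mathcal{W})=0$. The essential point is that $\mathcal{W}$ is realized as an inductive limit of Razak building blocks, and these building blocks are precisely 1-dimensional NCCW-complexes in the sense of the pullback diagram \eqref{eq: CW}; this is the decomposition used in \cite{Razak} and \cite{Jacelon}. Hence $\mathcal{W}$ is a simple inductive limit of 1-dimensional NCCW-complexes with trivial K-groups, which is exactly the hypothesis on $B$ in the theorem above.

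Granting this, the first conclusion is immediate: since $A$ is locally contained in $\mathcal{B}$, applying the preceding theorem with $B=\mathcal{W}$ yields that $A\otimes\mathcal{W}$ is locally contained in $\mathcal{C}$. The second conclusion, $\mathrm{dr}(A\otimes\mathcal{W})=1$, then follows at once from the corollary immediately above, again specialized to $B=\mathcal{W}$; note that the non-triviality of the decomposition rank (that it cannot be $0$) is already built into that corollary via the stable projectionlessness of the tensor product, which holds here because $\mathcal{W}$ is stably projectionless.

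I do not expect a genuine obstacle, since the statement is a corollary by design. The only point requiring care is confirming that the inductive limit decomposition of $\mathcal{W}$ furnished by \cite{Jacelon} really consists of 1-dimensional NCCW-complexes rather than more general RSH$_0$-algebras; this is guaranteed by the explicit form of the Razak blocks, so invoking \cite{Jacelon} and \cite{Razak} for the decomposition and the triviality of the K-groups suffices to complete the argument.
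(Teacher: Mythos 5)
Your proposal is correct and matches the paper's (implicit) argument exactly: the corollary is stated in the paper without proof precisely because it is the specialization $B=\mathcal{W}$ of the preceding theorem and decomposition-rank corollary, and your verification that $\mathcal{W}$ qualifies --- simple, with trivial K-groups, and an inductive limit of Razak blocks, which are 1-dimensional NCCW-complexes in the sense of \eqref{eq: CW} --- is the only content needed. Your remark that stable projectionlessness of $\mathcal{W}$ is what rules out decomposition rank $0$ is also the same mechanism used in the paper's proof of the preceding corollary.
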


\begin{bibdiv}
\begin{biblist}

\bib{Akemann-Pedersen}{article}{
   author={Akemann, C. A.},
   author={Pedersen, G. K.},
   title={Ideal perturbations of elements in $C\sp*$-algebras},
   journal={Math. Scand.},
   volume={41},
   date={1977},
   number={1},
   pages={117--139}
}
\bib{Brown-Pedersen}{article}{
   author={Brown, L. G.},
   author={Pedersen, G. K.},
   title={Limits and $C\sp \ast$-algebras of low rank or dimension},
   journal={J. Operator Theory},
   volume={61},
   date={2009},
   number={2},
   pages={381--417}
}
\bib{Ciuperca-Elliott-Santiago}{article}{
   author={Ciuperca, A.},
   author={Elliott, G. A.},
   author={Santiago, L.},
   title={On inductive limits of type-I $C\sp *$-algebras with
   one-dimensional spectrum},
   journal={Int. Math. Res. Not. IMRN},
   date={2011},
   number={11},
   pages={2577--2615},
}

\bib{Coward-Elliott-Ivanescu}{article}{
   author={Coward, K. T.},
   author={Elliott, G. A.},
   author={Ivanescu, C.},
   title={The Cuntz semigroup as an invariant for $C\sp *$-algebras},
   journal={J. Reine Angew. Math.},
   volume={623},
   date={2008},
   pages={161--193},
}

\bib{Eilers-Loring-Pedersen}{article}{
   author={Eilers, S.},
   author={Loring, T. A.},
   author={Pedersen, G. K.},
   title={Fragility of subhomogeneous $C\sp *$-algebras with one-dimensional
   spectrum},
   journal={Bull. London Math. Soc.},
   volume={31},
   date={1999},
   number={3},
   pages={337--344},
}

\bib{Elliott-Gong-Li}{article}{
   author={Elliott, G. A.},
   author={Gong, G.},
   author={Li, L.},
   title={On the classification of simple inductive limit $C\sp *$-algebras.
   II. The isomorphism theorem},
   journal={Invent. Math.},
   volume={168},
   date={2007},
   number={2},
   pages={249--320},
}

\bib{Elliott-Robert-Santiago}{article}{
   author={Elliott, G. A.},
   author={Robert, L.},
   author={Santiago, L.},
   title={The cone of lower semicontinuous traces on a $C\sp *$-algebra},
   journal={Amer. J. Math.},
   volume={133},
   date={2011},
   number={4},
   pages={969--1005},
}

\bib{Engelking}{book}{
   author={Engelking, R.},
   title={Dimension theory},
   note={Translated from the Polish and revised by the author;
   North-Holland Mathematical Library, 19},
   publisher={North-Holland Publishing Co.},
   place={Amsterdam},
   date={1978},
   pages={x+314 pp. (loose errata)},
   isbn={0-444-85176-3},
}

\bib{Continuous-Domains}{book}{
   author={Gierz, G.},
   author={Hofmann, K. H.},
   author={Keimel, K.},
   author={Lawson, J. D.},
   author={Mislove, M.},
   author={Scott, D. S.},
   title={Continuous lattices and domains},
   series={Encyclopedia of Mathematics and its Applications},
   volume={93},
   publisher={Cambridge University Press},
   place={Cambridge},
   date={2003},
   pages={xxxvi+591},
   isbn={0-521-80338-1},
   review={\MR{1975381 (2004h:06001)}},
   doi={10.1017/CBO9780511542725},
}

\bib{Gong}{article}{
   author={Gong, G.},
   title={On the classification of simple inductive limit $C\sp *$-algebras.
   I. The reduction theorem},
   journal={Doc. Math.},
   volume={7},
   date={2002},
   pages={255--461 (electronic)},
}

\bib{Herman-Vaserstein}{article}{
   author={Herman, R. H.},
   author={Vaserstein, L. N.},
   title={The stable range of $C\sp{\ast} $-algebras},
   journal={Invent. Math.},
   volume={77},
   date={1984},
   number={3},
   pages={553--555},
}

\bib{Jacelon}{article}{
   author={Jacelon, B.},
   title={A simple, monotracial, stably projectionless C*-algebra},
   journal={arXiv:1006.5397},
   volume={},
   date={2010},
   number={},
   pages={},
   issn={},
   review={},
   doi={},
}

\bib{Kirchberg-Rordam}{article}{
   author={Kirchberg, E.},
   author={R{\o}rdam, M.},
   title={Infinite non-simple $C\sp *$-algebras: absorbing the Cuntz
   algebras $\scr O\sb \infty$},
   journal={Adv. Math.},
   volume={167},
   date={2002},
   number={2},
   pages={195--264},
}

\bib{Kirchberg-Winter}{article}{
   author={Kirchberg, E.},
   author={Winter, W.},
   title={Covering dimension and quasidiagonality},
   journal={Internat. J. Math.},
   volume={15},
   date={2004},
   number={1},
   pages={63--85},
}

\bib{Hutian}{article}{
   author={Liang, H.},
   title={Stable recursive subhomogeneous C*-algebras},
   journal={arXiv:1112.2447},
   volume={},
   date={2011},
   number={},
   pages={},
}

\bib{Loring}{book}{
   author={Loring, T. A.},
   title={Lifting solutions to perturbing problems in $C\sp *$-algebras},
   series={Fields Institute Monographs},
   volume={8},
   publisher={American Mathematical Society},
   place={Providence, RI},
   date={1997},
   pages={x+165},
}

\bib{Ping-Winter}{article}{
   author={Ng, P. W.},
   author={Winter, W.},
   title={A note on subhomogeneous $C\sp \ast$-algebras},
   language={English, with English and French summaries},
   journal={C. R. Math. Acad. Sci. Soc. R. Can.},
   volume={28},
   date={2006},
   number={3},
   pages={91--96},
}

\bib{Nistor}{article}{
   author={Nistor, V.},
   title={Stable rank for a certain class of type ${\rm I}$ $C\sp
   \ast$-algebras},
   journal={J. Operator Theory},
   volume={17},
   date={1987},
   number={2},
   pages={365--373},
}


\bib{PhillipsIRSH}{article}{
   author={Phillips, N. C.},
   title={Cancellation and stable rank for direct limits of recursive
   subhomogeneous algebras},
   journal={Trans. Amer. Math. Soc.},
   volume={359},
   date={2007},
   number={10},
   pages={4625--4652},
}

\bib{PhillipsRSH}{article}{
   author={Phillips, N. C.},
   title={Recursive subhomogeneous algebras},
   journal={Trans. Amer. Math. Soc.},
   volume={359},
   date={2007},
   number={10},
   pages={4595--4623 (electronic)},
}

\bib{Pedersen}{article}{
   author={Pedersen, G. K.},
   title={Pullback and pushout constructions in $C\sp *$-algebra theory},
   journal={J. Funct. Anal.},
   volume={167},
   date={1999},
   number={2},
   pages={243--344},
}
\bib{PedersenBook}{book}{
   author={Pedersen, G. K.},
   title={$C\sp{\ast} $-algebras and their automorphism groups},
   series={London Mathematical Society Monographs},
   volume={14},
   publisher={Academic Press Inc. [Harcourt Brace Jovanovich Publishers]},
   place={London},
   date={1979},
   pages={ix+416},
}

\bib{Razak}{article}{
   author={Razak, S.},
   title={On the classification of simple stably projectionless $C\sp
   *$-algebras},
   journal={Canad. J. Math.},
   volume={54},
   date={2002},
   number={1},
   pages={138--224},
}

\bib{Rieffel}{article}{
   author={Rieffel, M. A.},
   title={Dimension and stable rank in the $K$-theory of
   $C\sp{\ast}$-algebras},
   journal={Proc. London Math. Soc. (3)},
   volume={46},
   date={1983},
   number={2},
   pages={301--333},
}

\bib{LeonelPhD}{article}{
   author={Robert, L.},
   title={Classification of non-simple approximate interval
          C*-algebras: the triangular case, {\rm Thesis,}}
   journal={University of Toronto, 2006},
   status={},
}

\bib{Robert}{article}{
   author={Robert, Leonel},
   title={Classification of inductive limits of 1-dimensional NCCW complexes},
   journal={Adv. Math (to appear), arXiv:1007.1964},
   volume={},
   date={2012},
   number={},
   pages={},
}


\bib{Rordam-stable-rank}{article}{
   author={R{\o}rdam, M.},
   title={The stable and the real rank of $\scr Z$-absorbing $C\sp
   *$-algebras},
   journal={Internat. J. Math.},
   volume={15},
   date={2004},
   number={10},
   pages={1065--1084},
}

\bib{Sudo}{article}{
   author={Sudo, T.},
   title={Stable rank of $C\sp *$-algebras of continuous fields},
   journal={Tokyo J. Math.},
   volume={28},
   date={2005},
   number={1},
   pages={173--188},
}

\bib{Tikuisis-Winter}{article}{
   author={Tikuisis, A.},
   author={Winter, W.}
   title={Decomposition rank and $\mathcal{Z}$-stability},
   journal={arXiv:1210.1386},
   volume={},
   date={2012},
   number={},
   pages={},
}

\bib{Winter-subhom}{article}{
   author={Winter, W.},
   title={Decomposition rank of subhomogeneous $C\sp *$-algebras},
   journal={Proc. London Math. Soc. (3)},
   volume={89},
   date={2004},
   number={2},
   pages={427--456},
}

\bib{Winter-dec-rank}{article}{
   author={Winter, W.},
   title={Decomposition rank and $\scr Z$-stability},
   journal={Invent. Math.},
   volume={179},
   date={2010},
   number={2},
   pages={229--301},
}

\bib{Winter-nuc-dim}{article}{
   author={Winter, W.},
   title={Nuclear dimension and $\scr{Z}$-stability of pure $\rm C\sp
   *$-algebras},
   journal={Invent. Math.},
   volume={187},
   date={2012},
   number={2},
   pages={259--342},
}

\end{biblist}
\end{bibdiv}

\end{document}